\newcommand{\widebar}[1]{\hspace{0.5mm}\overline{\hspace{-0.5mm}#1\hspace{-0.5mm}}\hspace{0.5mm}}
\theoremstyle{plain}
    \newtheorem{thm}{Theorem}[section]
    \newtheorem{claim}[thm]{Claim}
    \newtheorem{corollary}[thm]{Corollary}
    \newtheorem{lemma}[thm]{Lemma}
    \newtheorem{theorem}[thm]{Theorem}
\theoremstyle{definition}
    \newtheorem{remark}[thm]{Remark}
\theoremstyle{remark}
\theoremstyle{question}
    \newtheorem{setup}[thm]{}
\newcommand{\C}{\mathbb{C}}
\newcommand{\BPP}{\mathbb{P}}
\newcommand{\Q}{\mathbb{Q}}
\newcommand{\R}{\mathbb{R}}
\newcommand{\Z}{\mathbb{Z}}
\newcommand{\OO}{\mathcal{O}}
\newcommand{\Aut}{\operatorname{Aut}}
\newcommand{\block}{\operatorname{block}}
\newcommand{\diag}{\operatorname{diag}}
\newcommand{\Gal}{\operatorname{Gal}}
\newcommand{\GL}{\operatorname{GL}}
\newcommand{\group}{\operatorname{group}}
\newcommand{\id}{\operatorname{id}}
\newcommand{\Imm}{\operatorname{Im}}
\newcommand{\Ker}{\operatorname{Ker}}
\newcommand{\NE}{\overline{\operatorname{NE}}}
\newcommand{\Nef}{\operatorname{Nef}}
\newcommand{\NS}{\operatorname{NS}}
\newcommand{\Proj}{\operatorname{Proj}}
\newcommand{\rank}{\operatorname{rank}}
\newcommand{\rk}{\operatorname{rk}}
\newcommand{\Sing}{\operatorname{Sing}}
\newcommand{\SL}{\operatorname{SL}}
\newcommand{\SSO}{\operatorname{SO}}
\newcommand{\Stab}{\operatorname{Stab}}
\newcommand{\torsion}{\operatorname{torsion}}
\newcommand{\variety}{\operatorname{variety}}
\newcommand{\Alb}{\operatorname{Alb}}
\begin{document}

\title[Automorphism groups of positive entropy on projective threefolds]{
Automorphism groups of positive entropy on projective threefolds}

\author{Frederic Campana}
\address
{
\textsc{Department of Mathematics} \endgraf
\textsc{University of Nancy 1,
BP 239,
F-54506, Vandoeuvre-les-Nancy, Cedex,
France}}
\email{Frederic.Campana@iecn.u-nancy.fr}

\author{Fei Wang}
\address
{
\textsc{Department of Mathematics} \endgraf
\textsc{National University of Singapore,
10 Lower Kent Ridge Road,
Singapore 119076
}}
\email{matwf@nus.edu.sg}

\author{De-Qi Zhang}
\address
{
\textsc{Department of Mathematics} \endgraf
\textsc{National University of Singapore,
10 Lower Kent Ridge Road,
Singapore 119076
}}
\email{matzdq@nus.edu.sg}

\begin{abstract}
We prove two results about the natural representation of a group $G$
of automorphisms of a normal projective threefold $X$
on its second cohomology. We show that if $X$ is minimal then $G$, modulo a normal subgroup
of null entropy, is embedded as a Zariski-dense subset in a semi-simple real linear algebraic group of
real rank $\le 2$.
Next, we show that $X$ is a complex torus if the image of $G$ is an almost abelian group of positive rank
and the kernel is infinite,
unless $X$ is equivariantly non-trivially fibred.
\end{abstract}

\subjclass[2000]{
32H50, 
14J50, 
32M05, 
37B40 
}
\keywords{automorphism, iteration, complex dynamics, topological entropy}


\maketitle

\section{Introduction}\label{Intro}

Let $X$ be a compact K\"ahler manifold.
For an automorphism $g \in \Aut(X)$, its (topological)
{\it entropy} $h(g) = \log \rho(g)$ is defined as
the logarithm of the {\it spectral radius} $\rho(g)$
of the pullback action $g^*$ on the total cohomology group of $X$, i.e.,
$$\rho(g) := \max \{|\lambda| \, ; \, \lambda \,\,\,
\text{is an eigenvalue of} \,\,\,
g^* | \oplus_{i \ge 0} H^i(X, \C)\}.$$
By the fundamental result of Gromov and Yomdin,
the above definition is equivalent to the original dynamical definition of entropy
(cf.\ \cite{Gr}, \cite{Yo}).

An element $g \in \Aut(X)$ is of {\it null entropy}
if its (topological) entropy $h(g)$ equals $0$.
For a subgroup $G$ of $\Aut(X)$, we define the {\it null subset} of $G$ as
$$N(G) := \{g \in G \, | \, g \, \,
\text{\rm is of null entropy, i.e.,} \, h(g) = 0\} $$
which may {\it not} be a subgroup.
A group $G \le \Aut(X)$ is of {\it null entropy} if every $g \in G$
is of null entropy, i.e., if $G$ equals $N(G)$.

By the classification of surfaces, a complex surface $S$ has some
$g \in \Aut(S)$ of positive entropy only if
$S$ is bimeromorphic to a rational surface, complex torus,
$K3$ surface or Enriques surface (cf. \cite{Ca99}).
See \cite{JDG} for a similar phenomenon in higher dimensions.

Recall that a normal projective variety $X$ is {\it minimal} if it has at worst
terminal singularities and the canonical divisor $K_X$ is nef
(cf.\ \cite[Definition 2.34]{KM}).
Let $\NS(X)$ be the {\it Neron-Severi group}
and $\NS_{\C}(X) := \NS(X) \otimes \C$.
For a subgroup $G$ of $\Aut(X)$,
let $\overline{G} \subseteq \GL(\NS_{\C}(X))$ be the Zariski-closure
of the action of $G$ on $\NS_{\C}(X)$ (simply denoted by $G \, | \, \NS_{\C}(X)$), and let
$R(\overline{G})$ be its solvable radical,
both of which are defined over $\Q$ (cf.~\cite[ChI; 0.11, 0.23]{Ma}).
We have
a natural composition of homomorphisms:
$\iota: G \to G \, | \, \NS_{\C}(X) \to \overline{G}$.
Denote by
$$R(G) := \iota^{-1}(\iota(G) \, \cap \, R(\widebar{G})) \, \lhd \, G .$$

\begin{theorem}\label{Cor3}
Let $X$ be a $3$-dimensional minimal projective variety and
$G \le \Aut(X)$ a subgroup
such that $G \, | \, \NS_{\C}(X)$ is not virtually solvable.
Then $R(G) \, | \, \NS_{\C}(X)$ is virtually unipotent.
Replacing $G$ by a suitable finite-index subgroup,
$G/R(G)$ is embedded as a Zariski-dense subgroup in
$H := \overline{G}/R(\overline{G})$ so that $H(\R)$ is
a semi-simple real linear algebraic group
and is either of real rank $1$ {\rm (cf. \cite[0.25]{Ma})}
or locally isomorphic to $\SL_3(\R)$ or $\SL_3(\C)$
$($where {\it locally isomorphic}
means: having isomorphic  Lie algebras$)$.
\end{theorem}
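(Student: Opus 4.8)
The plan is to feed the geometry of the minimal threefold into the Tits-type structure theory for groups acting on $H^{1,1}$, and then to exploit the extra structure that $\NS_{\C}(X)$ carries on a threefold: a $G$-invariant nondegenerate symmetric cubic form and a $G$-invariant salient full-dimensional cone. First I would reduce. Since $K_X$ is nef and $\Aut(X)$ is finite when $X$ is of general type, the hypothesis forces $K_X^3=0$; and after a $G$-equivariant small modification we may assume $X$ is $\mathbb{Q}$-factorial, which leaves $\NS_{\C}(X)$ unchanged. Then $G$, and hence its Zariski closure $\overline{G}\subset\GL(\NS_{\C}(X))$, preserves the symmetric trilinear form $c(D_1,D_2,D_3):=D_1\cdot D_2\cdot D_3$ (nondegenerate, because a nonzero divisor class pairs nontrivially against suitable classes) together with the closed nef cone $\overline{\Nef}(X)\subset\NS_{\mathbb{R}}(X)$, and it fixes the classes $K_X$, $K_X^2$ and $c_2(X)$. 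By Miyaoka's inequality for minimal threefolds, $D\mapsto c_2(X)\cdot D$ is nonnegative on $\overline{\Nef}(X)$; and $D\mapsto D^2\cdot A$ has exactly one positive eigenvalue for $A$ ample, by the Hodge index theorem. If $K_X\equiv0$ one argues separately from the known structure of terminal threefolds with numerically trivial canonical class, so assume $K_X\not\equiv0$; then $K_X$ is an $\overline{G}$-fixed vector in $\NS_{\C}(X)$.

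Next, by the Tits-type theorem for automorphism groups of compact K\"ahler manifolds (Dinh--Sibony; Zhang): after replacing $G$ by a finite-index subgroup, $\overline{G}$ is a connected $\mathbb{Q}$-group, $H:=\overline{G}/R(\overline{G})$ is semisimple, $G/R(G)$ is Zariski-dense in $H$, and $R(\overline{G})/R_u(\overline{G})$ is the connected centre of the reductive group $\overline{G}/R_u(\overline{G})$, where $R_u$ denotes the unipotent radical. For the first assertion, $\iota(R(G))$ consists of integral matrices lying in the solvable group $R(\overline{G})$; if some $g\in R(G)$ acted on $\NS_{\C}(X)$ with an eigenvalue of modulus $>1$, then $g$ would be of positive entropy, and a Perron--Frobenius argument for the $g^{*}$-invariant cone $\overline{\Nef}(X)$ together with nondegeneracy of $c$ shows that a positive-entropy element cannot lie in the solvable radical once $G\,|\,\NS_{\C}(X)$ is not virtually solvable. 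Applying this to $g$ and $g^{-1}$, every element of $\iota(R(G))$ has all eigenvalues on the unit circle; by Kronecker their semisimple parts are of finite --- indeed bounded --- order (Burnside), hence generate a finite subgroup of the torus $R(\overline{G})/R_u(\overline{G})$, so $\iota(R(G))$ is virtually unipotent.

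For the constraints on $H(\mathbb{R})$, let $L\subset\overline{G}$ be the derived group of a Levi subgroup: $L$ is semisimple, isogenous to $H$, acts faithfully on $V:=\NS_{\C}(X)$, and preserves $c$ and $\overline{\Nef}(X)$ while fixing $K_X$. Decomposing $V$ into $L$-isotypic components and using nondegeneracy of $c$, the essential situation is that of a faithful $L$-representation carrying a nondegenerate invariant cubic and an invariant salient cone on which the cubic is nonnegative; by the classification of such data --- equivalently, by Vinberg's classification of homogeneous (here symmetric) convex cones, applied to the $L$-orbit of an interior nef class --- $L$ is, up to isogeny, the structure group of a simple Euclidean Jordan algebra of rank $\le 3$. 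A rank-$\le2$ Jordan algebra gives $\operatorname{rank}_{\mathbb{R}}H\le1$; a simple rank-$3$ one gives $\operatorname{rank}_{\mathbb{R}}H=2$, with $H$ one of $\SL_3(\mathbb{R})$, $\SL_3(\mathbb{C})$, $\SL_3(\HH)$ or $E_6^{-26}$ acting on $\operatorname{Herm}_3$ over $\mathbb{R}$, $\mathbb{C}$, $\HH$ or $\mathbb{O}$. In particular $\operatorname{rank}_{\mathbb{R}}H\le2$ (sharpening the Dinh--Sibony bound $\le n-1$ for $n=3$). When $\operatorname{rank}_{\mathbb{R}}H=2$, the quaternionic and octonionic possibilities, as well as any reducible or semisimple-but-not-simple configuration, are excluded by confronting the $\overline{G}$-invariant data recorded above --- the $L$-fixed class $K_X$ with $K_X^3=0$, nonnegativity of $c_2(X)\cdot(-)$ on the cone, and the one-positive-eigenvalue Hodge-index form --- with the invariant tensors available on $\operatorname{Herm}_3$ over $\mathbb{R},\mathbb{C},\HH,\mathbb{O}$; this leaves $H$ locally isomorphic to $\SL_3(\mathbb{R})$ or $\SL_3(\mathbb{C})$.

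The main obstacle is this last step: the nef cone of a threefold is not homogeneous in general, so one must single out a canonical $\overline{G}$-invariant subquotient of $\NS_{\C}(X)$ on which the cubic stays nondegenerate and the induced cone is homogeneous, handle the possibility that $L$ is semisimple but not simple or that several isotypic blocks of $V$ interact through $c$, and then push the invariant-theoretic bookkeeping with $K_X$, $K_X^2$, $c_2(X)$ and the Hodge-index form far enough to eliminate the quaternionic and octonionic Jordan algebras --- the heuristic being that the Hodge structure of a compact K\"ahler threefold cannot carry a quaternionic or octonionic multiplication, just as the endomorphism algebra of a complex elliptic curve is never a quaternion algebra. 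A secondary technical point is the claim used in the second paragraph that the solvable radical contains no positive-entropy element on $\NS_{\C}(X)$, which is exactly where the hypothesis that $G\,|\,\NS_{\C}(X)$ is not virtually solvable is essential.
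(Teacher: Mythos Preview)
Your proposal has a genuine gap at exactly the point you flag as ``a secondary technical point'': the assertion that a positive-entropy element cannot lie in the solvable radical once $G\,|\,\NS_{\C}(X)$ is not virtually solvable. This is not secondary --- it is the heart of the argument, and the Perron--Frobenius/nondegeneracy sketch you give does not prove it. In the paper the dichotomy is made explicit (Theorem~\ref{Ssimple}): either $R(G)$ is of null entropy, in which case the conclusion is immediate, or it is not, and then one must work hard. When $R(G)$ contains some $h$ of positive entropy, the paper first shows (Lemma~\ref{K0}) that $K_X\sim_{\Q}0$, then uses the common nef eigenvector $L_h$ and Lemma~\ref{stab} to produce a character $\psi:G\to(\R,+)$. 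If $\Ker\psi=N(G)$ one gets $G/N(G)$ abelian, contradicting non-virtual-solvability; but the alternative $\Ker\psi\ne N(G)$ is not absurd by any cone argument --- it forces a nef and big class perpendicular to $c_2(X)$, whence by Shepherd-Barron--Wilson $X$ is birational to a torus quotient $T/F$. The proof then lifts $G$ to $T$ and carries out a detailed analysis of the action on $H^0(T,\Omega_T^1)^{\vee}\subset\C^3$, using centrality of $R$ in $\hat G$, Jordan normal forms, and a lattice-discreteness contradiction, to finally exclude positive entropy in $R(G)$. None of this is captured by ``Perron--Frobenius plus nondegeneracy of $c$''. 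Note also that your case split is inverted: you dismiss $K_X\equiv 0$ as the case to be handled ``separately from the known structure'', whereas in fact $K_X\not\equiv 0$ is the easy case (the Iitaka fibration is $G$-equivariant onto a lower-dimensional base; cf.\ Remark~\ref{Rem1}(1)), and $K_X\equiv 0$ is precisely where the difficulty concentrates.

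For the constraints on $H(\R)$, your Jordan-algebra/Vinberg route is in the right spirit and is essentially what underlies the Cantat--Zeghib result the paper cites as a black box. But the obstacles you name are real: the nef cone is not homogeneous, there is no canonical passage to a homogeneous $\overline G$-invariant subquotient on which the cubic stays nondegenerate, and your proposed elimination of $\SL_3(\HH)$ and $E_6^{-26}$ via $K_X$, $c_2(X)$ and Hodge index is only a heuristic. The paper sidesteps all of this by invoking \cite[Theorem~5.1, Proposition~5.2]{CZ} directly for the Levi factor $S(\R)$ acting on $\NS_{\R}(X)$, observing that the Margulis-type hypothesis there is unnecessary because $S(\R)\subset\overline G(\R)$ already acts on $\NS_{\R}(X)$ by Zariski-density. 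If you want a self-contained argument along your lines, you would need to actually carry out the invariant-theoretic bookkeeping you describe, and that is a substantial project in its own right.
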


The key step of the proof is Theorem \ref{Ssimple} of which part (1) is a
consequence of \cite[Theorem 5.1]{CZ}
in which the authors have determined the actions of irreducible lattices
in semi-simple real Lie groups of higher rank on threefolds.

\par \vskip 0.5pc
For a subgroup $G$ of $\Aut(X)$, the pair $(X, G)$ is {\it non-strongly-primitive},
if there are $X'$ bimeromorphic to $X$, a finite-index subgroup $G_1$ of $G$
and a holomorphic map $X' \to Y$ with $0 < \dim Y < \dim X$, such that
the induced bimeromorphic action of $G_1$ on $X'$ is biholomorphic and
descends to an action on $Y$ with $X' \to Y$ being $G_1$-equivariant.
$(X, G)$ is {\it strongly primitive} if it is not non-strongly-primitive.

\par \vskip 0.5pc
Our second main result is Theorem \ref{ThF} (being generalized to higher dimensions in \cite{FZ}).

\begin{theorem}\label{ThF}
Let $X$ be a $3$-dimensional normal projective variety with only
$\Q$-factorial terminal singularities, and $G \le \Aut(X)$ a subgroup
such that $G_0 := G \cap \Aut_0(X)$ is infinite
and the quotient group $G/G_0$ is an almost abelian group of positive rank
$($cf. $\ref{setup2.1}$ for the terminology$)$.
Suppose that the pair $(X, G)$ is strongly primitive.
Then $X$ is a complex $3$-torus and $G_0$ is Zariski-dense
in $\Aut_0(X)$.
\end{theorem}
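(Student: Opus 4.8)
The plan is to pin down the structure of the identity component $\Aut_0(X)$ and then to use the strong‑primitivity hypothesis to destroy, one at a time, every equivariant fibration that could occur, until $X$ is forced to be homogeneous under a $3$‑dimensional abelian variety. Throughout we are free to replace $G$ by a finite‑index subgroup and $X$ by a $G$‑equivariant birational model, since the three standing hypotheses (that $G_0$ is infinite, that $G/G_0$ is almost abelian of positive rank, and that $(X,G)$ is strongly primitive) are all stable under such replacements, and since the conclusion for a finite‑index subgroup implies it for $G$. We note at the outset that $\Aut_0(X)$ acts trivially on the cohomology of $X$, so $G_0\subseteq N(G)$ and the positive‑rank hypothesis forces $G$ to contain an element of positive entropy, necessarily outside $G_0$.

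First I would eliminate the uniruled case. By Chevalley's structure theorem write $1\to L\to\Aut_0(X)\to A_0\to 1$ with $L$ a connected linear algebraic group and $A_0$ an abelian variety. If $L\neq\{1\}$ then $L$ contains a copy of $\mathbb{G}_a$ or of $\mathbb{G}_m$; since $\Aut_0(X)\subseteq\Aut(X)$ acts faithfully, the general orbit closures of such a one‑parameter subgroup are rational curves sweeping out $X$, so $X$ is uniruled, hence has no minimal model. A $G$‑equivariant minimal model program in dimension three (after passing to a finite‑index subgroup if needed) then yields a birational model $X^{*}$ on which $G$ acts biregularly together with a $G$‑equivariant Mori fibre space $X^{*}\to B$. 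If $\dim B>0$ this is precisely a forbidden equivariant fibration, contradicting strong primitivity. If $\dim B=0$ then $X^{*}$ is a $\mathbb{Q}$‑factorial terminal Fano threefold of Picard number one, so $\Aut(X^{*})$ acts trivially on $N^1(X^{*})_{\mathbb{R}}\cong\mathbb{R}$; passing to a $G$‑equivariant resolution, on which the pullback action on $H^{1,1}$ is trivial on the pullback of $N^1(X^{*})_{\mathbb{R}}$ and merely permutes the finitely many exceptional prime divisors, one gets $\lambda_1(g)=\lambda_1(g^{-1})=1$ for every $g\in G$; since $\lambda_1$ is a birational invariant and $h(g)=\log\max(\lambda_1(g),\lambda_1(g^{-1}))$ on a threefold (Gromov--Yomdin together with duality and log‑concavity of dynamical degrees), $G$ would be of null entropy, contradicting positive rank. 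Hence $L=\{1\}$, so $\Aut_0(X)=A_0$ is an abelian variety, of positive dimension because $G_0$ is infinite.

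Next I would exploit the $A_0$‑action. As $\Aut_0(X)=A_0$ is normal in $\Aut(X)$, the group $G$ permutes the $A_0$‑orbits on $X$ and hence acts on the orbit quotient. Let $d$ be the dimension of a general $A_0$‑orbit; faithfulness gives $d\geq 1$. If $d\leq 2$, the rational quotient $X\ratmap X/A_0$ has base of dimension $3-d\in\{1,2\}$ and is $G$‑equivariant, so a $G$‑equivariant resolution of indeterminacy exhibits $(X,G)$ as non‑strongly‑primitive — contradiction. Therefore $d=3$: a general $A_0$‑orbit is open in $X$, and, being isomorphic to a quotient of the abelian variety $A_0$ by its (finite, automatically normal) stabiliser, it is complete, hence closed; being open, closed and nonempty in the irreducible $X$ it is all of $X$. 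Thus $X$ is an abelian variety, i.e.\ a complex $3$‑torus $T$, and $\Aut_0(X)=T$ acts by translations. It remains to see that $G_0=G\cap T$ is Zariski‑dense in $T$. Let $H$ be the identity component of the Zariski closure of $G_0$ in $T$; it is a subtorus, and it is $G$‑stable because $G$ normalises $G_0$ and conjugation by an element of $\Aut(X)$ carries the translation subgroup $H$ to the translation subgroup $A(H)$, where $A$ is the linear part of the conjugating affine automorphism. If $\dim H=0$ then $G_0$ is finite, which is excluded; if $0<\dim H<3$ then $T\to T/H$ is a $G$‑equivariant fibration of $X=T$ onto a torus of dimension in $\{1,2\}$, again contradicting strong primitivity. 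Hence $\dim H=3$, so $\overline{G_0}=T=\Aut_0(X)$, completing the proof.

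I expect the real difficulty to sit entirely in the uniruled step: making a $G$‑equivariant minimal model program legitimate for this group — which, though almost abelian over $\Aut_0(X)$, need not be finite — and guaranteeing that the resulting Mori fibre structure is genuinely equivariant on a model on which $G$ acts biregularly. Once that is available, the Picard‑number‑one Fano subcase closes cleanly by birational invariance of the first dynamical degree, and everything else is soft, using only Chevalley's theorem, completeness of abelian varieties, equivariance of (functorial) resolution, and the definition of strong primitivity.
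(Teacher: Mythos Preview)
Your overall architecture mirrors the paper's: dispose of the uniruled case via MMP, then use an orbit argument to force homogeneity. Your Chevalley dichotomy ($L\ne 1$ versus $\Aut_0(X)$ an abelian variety) is a reasonable variant of the paper's split by irregularity, and the direct $A_0$-orbit argument in the second case is correct and close in spirit to the paper's reduction to almost homogeneity and the albanese map.

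There is, however, a genuine error in the Fano subcase. You assert at the outset that positive rank of $G/G_0$ forces $G$ to contain an element of positive entropy, and you invoke this as the contradiction when $\rho(X^*)=1$. That implication is false: positive rank only says $G/G_0$ is virtually $\Z^{\oplus r}$ with $r\ge 1$, and a unipotent automorphism of infinite order (say on a product of elliptic curves) already gives infinite $G/G_0$ with null entropy. The paper's contradiction here is different and is the correct one: $G$ fixes the ample class $-K_{X_m}$, so by Fujiki--Lieberman $G\,|\,X_m$ lies in $\Aut_0(X_m)$ up to finite index; since $\NS_{\C}(X)$ is spanned by the pullback of $\NS_{\C}(X_m)$ and the finitely many exceptional divisors, $G\,|\,\NS_{\C}(X)$ is finite, and Lemma~\ref{G|NS} gives $|G:G_0|<\infty$, contradicting positive rank. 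Your resolution argument can be repaired along the same lines (conclude $G\,|\,\NS$ finite, not merely $\lambda_1=1$).

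The gap you flag is not a technicality to be patched later but is the substance of the proof. Because $G\,|\,\NS_{\C}(X)$ is infinite, extremal rays are not $G$-stable a priori, and no off-the-shelf equivariant MMP applies. The paper's Claims~\ref{c1ThF}--\ref{c5ThF} carry this out step by step: divisorial contractions become $G$-equivariant after finite index because their exceptional divisors lie in the $G$-stable complement $F=X\setminus U$ of the open $G_0$-orbit, and the almost-abelian hypothesis on $G/G_0$ is used essentially (via \cite[Lemma 3.7, Theorem 2.13]{uniruled}) to show the relevant extremal rays are $G$-periodic; flips require a separate analysis of the $G$-orbit of the flipping locus inside $F$ (Claims~\ref{c3ThF}--\ref{c5ThF}), together with a study of the eventual conic-bundle base. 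Your sentence ``a $G$-equivariant minimal model program \dots\ yields a birational model $X^*$ on which $G$ acts biregularly together with a $G$-equivariant Mori fibre space'' is, in this context, precisely the statement that needs proving.
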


We remark that the almost abelian condition (as defined in $\ref{setup2.1}$)
on $G/G_0$ is used to show that
the extremal rays on $X$ are $G$-periodic
(cf.~\cite[Theorem 2.13, or Appendix]{uniruled}).

\begin{corollary}\label{Cor2}
Let $X$ be a $3$-dimensional normal projective variety with only
$\Q$-factorial
rational singularities,
and $G \le \Aut(X)$ a subgroup of {\rm null entropy}
such that $G \, | \, \NS_{\C}(X)$ is almost abelian of positive rank.
Assume that
$(X, G)$ is strongly primitive. Then, $\Aut_0(X) = \{\id_X\}$ and $h^1(X, \OO_X) = 0$.
\end{corollary}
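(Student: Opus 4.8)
The plan is to deduce Corollary~\ref{Cor2} from Theorem~\ref{ThF}, using strong primitivity both to eliminate the torus alternative in Theorem~\ref{ThF} and to control the Albanese map. First, for the vanishing of $\Aut_0(X)$, I would argue by contradiction: assuming $\Aut_0(X)\neq\{\id_X\}$, then since $\Aut_0(X)\lhd\Aut(X)$ the set $G':=G\cdot\Aut_0(X)$ is a subgroup of $\Aut(X)$, and it again satisfies the standing hypotheses. Indeed $\Aut_0(X)$, being connected, acts trivially on $H^*(X,\Z)$, so every element of $G'$ acts on $\bigoplus_i H^i(X,\C)$ as its $G$-component does; hence $G'$ is still of null entropy and $G'\,|\,\NS_{\C}(X)=G\,|\,\NS_{\C}(X)$ is still almost abelian of positive rank. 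Also $(X,G')$ is still strongly primitive, because a datum realizing $(X,G')$ as non-strongly-primitive, involving a finite-index subgroup $G_1'\le G'$, restricts along $G\cap G_1'\le G$ (of finite index in $G$) to one for $(X,G)$. Finally $G_0':=G'\cap\Aut_0(X)\supseteq\Aut_0(X)$ is infinite, and since $\Aut_0(X)$ has finite index in $\Ker\bigl(\Aut(X)\to\GL(\NS(X))\bigr)$ (Fujiki--Lieberman), $G'/G_0'$ is a finite extension of a finite-index subgroup of $G'\,|\,\NS_{\C}(X)$, hence again almost abelian of positive rank. So $(X,G')$ satisfies all hypotheses of Theorem~\ref{ThF}, except that $X$ is only assumed to have rational, not terminal, singularities.

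To remove that discrepancy I would pass to an $\Aut(X)$-equivariant resolution $\widetilde X\to X$ and run a $G'$-equivariant MMP on the smooth threefold $\widetilde X$, which is legitimate because $G'\,|\,\NS_{\C}$ is almost abelian, so the extremal rays are $G'$-periodic (cf.~\cite{uniruled} and the remark after Theorem~\ref{ThF}); here $\Aut_0(X)\hookrightarrow\Aut_0(\widetilde X)$, and both this group and $G'$ stay biregular along the program and embed into the automorphism group of the output, so the output still carries an infinite connected automorphism group. If the program ends at a Mori fibre space $X''\to Y$ with $\dim Y\ge1$, then $X''\to Y$ is a nontrivial $\Aut(X)$-equivariant fibration on a variety birational to $X$, contradicting strong primitivity; if $\dim Y=0$, then $\rank\NS(X'')=1$, yet tracking $\NS_{\C}$ through the divisorial contractions (each killing only a $G'$-fixed boundary class) and flips shows $G'\,|\,\NS_{\C}(X'')$ still has positive rank — impossible. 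Hence the MMP terminates at a minimal, so $\Q$-factorial terminal, model $X''$, and Theorem~\ref{ThF} forces $X''$ to be a complex $3$-torus. It then remains to observe that a complex $3$-torus $T$ cannot be strongly primitive together with a null-entropy group $\Gamma$ whose $\NS_{\C}$-action has positive rank: the translations act trivially on $\NS$, so some $\varphi\in\Gamma$ acts with infinite order on $H^1(T,\Z)\cong\Z^6$; being of null entropy, $\varphi\,|\,H^1$ has all eigenvalues roots of unity (Kronecker), and, using that a null-entropy group acts virtually unipotently on cohomology, after shrinking $\Gamma$ to a finite-index subgroup the subspace $(H^1)^{\Gamma}$ is a nonzero $\Gamma$-invariant rational sub-Hodge structure, \emph{proper} because otherwise $\Gamma$ would act trivially on $H^2(T)\supseteq\NS(T)$. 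A proper nonzero sub-Hodge structure of $H^1(T)$ gives a nontrivial quotient complex torus $T\to T''$ with $0<\dim T''<3$ to which $\Gamma$ descends, contradicting strong primitivity. Therefore $\Aut_0(X)=\{\id_X\}$.

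For the vanishing of $h^1(X,\OO_X)$, note that $h^1(X,\OO_X)=\dim\Alb(X)$ since $X$ has rational singularities, and suppose it were positive. Let $Z$ be the image of the $\Aut(X)$-equivariant Albanese morphism $\alb_X\colon X\to\Alb(X)$. If $\dim Z<3$, then $\alb_X\colon X\to Z$ itself witnesses non-strong-primitivity. If $\dim Z=3$, then $\alb_X$ is generically finite onto $Z$; letting $A_0\subseteq\Alb(X)$ be the stabiliser subtorus of $Z$ and $Z\to Z/A_0$ the Kawamata--Ueno fibration (with $Z/A_0$ of general type if positive-dimensional), the composite $X\to Z/A_0$ is $\Aut(X)$-equivariant since $A_0$ and that fibration are intrinsic to $Z\subseteq\Alb(X)$. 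If $1\le\dim A_0\le2$, this is a nontrivial equivariant fibration — contradiction; if $\dim A_0=0$, then $Z$, hence also $X$ (via the generically finite map), is of general type, so $\Aut(X)$ is finite and $G\,|\,\NS_{\C}$ has rank $0$ — contradiction; and if $\dim A_0=3$, then $\Alb(X)$ is a $3$-torus $A$ and $\alb_X$ is generically finite and surjective onto $A$. In the last case I would examine $\kappa(X)$: if $\kappa(X)\in\{1,2\}$ the Iitaka fibration is $\Aut(X)$-equivariant with positive-dimensional base of dimension $<3$, contradicting strong primitivity; if $\kappa(X)=3$, then $X$ is of general type and $\Aut(X)$ finite, impossible; and if $\kappa(X)=0$, then by Kawamata's characterisation of abelian varieties $\alb_X$ is birational, so $X$ is birationally a $3$-torus and the final argument of the previous paragraph applies. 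Every case gives a contradiction, so $h^1(X,\OO_X)=0$.

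The step I expect to be hardest is the reduction to Theorem~\ref{ThF} via the equivariant resolution/MMP: one must certify that null entropy, the almost-abelian positive-rank action on $\NS_{\C}$, strong primitivity, and infinitude of the connected automorphism part all survive each step, and dispose of the Mori-fibre-space output (and, if one instead argues directly that $\Aut_0(X)=\{\id_X\}$ via the orbit foliation, of the almost-homogeneous models that then appear) using strong primitivity. The other delicate point, used twice, is the passage from null entropy to a finite-index subgroup with a common \emph{proper} invariant sub-Hodge structure on $H^1$: null entropy only yields virtual unipotence of the cohomological action, and it is exactly positivity of the N\'eron--Severi rank that keeps the invariant subspace from being the whole of $H^1$.
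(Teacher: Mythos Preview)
Your strategy is essentially the paper's: enlarge $G$ to $G' := G \cdot \Aut_0(X)$, pass to an equivariant resolution, invoke Theorem~\ref{ThF}, and then show that a null-entropy group whose N\'eron--Severi action has positive rank cannot act strongly primitively on a complex $3$-torus. The paper packages the last two steps as Corollary~\ref{Cor1}, and handles both conclusions at once by observing that $q(X)>0$ already forces the resolved $X$ to be a torus (hence $\Aut_0(X)\ne\{\id\}$), whereas you prove $h^1=0$ separately via the Albanese image, the Kawamata--Ueno fibration, the Iitaka fibration, and Kawamata's characterisation of abelian varieties. That longer route is correct (note only that $\kappa(X)=-\infty$ is excluded because a variety generically finite over an abelian threefold is not uniruled), and your torus argument via the invariant rational sub-Hodge structure $(H^1)^{\Gamma}$ is a clean alternative to the fixed-locus argument in the proof of Corollary~\ref{Cor1}.

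The one real issue is the MMP detour in your first part. It is \emph{unnecessary}: Theorem~\ref{ThF} requires only $\Q$-factorial terminal singularities, not minimality, so once you pass to the smooth equivariant resolution $\widetilde X$ the theorem applies directly to $(\widetilde X,G')$ and forces $\widetilde X$ to be a torus; there is nothing to gain by running an MMP first. Your $\dim Y=0$ sub-argument is also flawed as written: you claim that ``tracking $\NS_{\C}$ through the divisorial contractions (each killing only a $G'$-fixed boundary class) and flips shows $G'\,|\,\NS_{\C}(X'')$ still has positive rank'', but quotienting a unipotent representation by a fixed line can strictly lower the rank of the image (e.g.\ $\Z$ acting on $\Z^2$ by $(e,d)\mapsto(e,d+e)$ becomes trivial on $\Z^2/\Z e$), so rank need not survive a divisorial contraction. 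The way the paper disposes of this case in Claim~\ref{c1ThF}(3) is instead via Fujiki--Lieberman: $G'$ fixes the ample class $-K_{X''}$, whence $G'$ is a finite extension of $G'\cap\Aut_0(X'')$. But again, simply deleting the MMP paragraph and applying Theorem~\ref{ThF} directly to $\widetilde X$ avoids all of this.
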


We do not have any example satisfying all the hypotheses of Corollary \ref{Cor2}.

Let $\tau$ be a primitive cubic or quartic root of $1$, $E := \C/(\Z + \Z \tau)$ and
$X := E^n /\langle \diag[\tau, \dots, \tau]\rangle$
(cf. \cite[Thm (0.3)]{OS}, \cite[Ex 1.7]{CY3}).
Take some $g$ in $\SL_n(\Z)$ such that $g$ acts on $X$ as an automorphism
of infinite order and null entropy.
When $n = 3$,
the group $G := \langle g \rangle$
satisfies the hypotheses of Corollary \ref{Cor2},
except the
strong primitivity which seems hard to verify.
The action of $\langle g \rangle$ on $E^n$
is not strongly primitive (cf. Proof of Cor. \ref{Cor1}).

\begin{remark}\label{Rem1}
(1) In Theorem \ref{Cor3}, suppose that $c_1(X) \ne 0$. Then
the Iitaka fibration $X \to Y$ is $G$-equivariant, and also non-trivial
by the abundance theorem
or the classification of surfaces.
Replacing $G$ by a subgroup of finite index, we may assume that
the induced action of $G$ on $Y$ is trivial (cf.~\cite[Theorem 14.10]{Ue}), so
$G$ acts faithfully on a general fibre $S$ and the group $G \, | \, S$
is not of null entropy (since the same holds for $G \, | \, X$; see Theorem \ref{ThB}).
Since $K_S = K_X | S \sim_{\Q} 0$,
our $S$ is a complex $2$-torus, $K3$ or Enriques surface.
So there is a homomorphism
$G  \, | \, X = G  \, | \, S \to \SSO(1, \rho(S) - 1) \le \SL(\NS_{\R}(S))$
($G$ being replaced by a subgroup of index $\le 2$) with kernel virtually contained in
$\Aut_0(S)$ and the Picard number $\rho(S) \le 20$.

(2) In Theorem \ref{ThF}, the strong primitivity assumption on $(X, G)$ is necessary
by considering $X = S \times T$ and $G = \langle g \rangle \times \Aut_0(T)$
where $g$ is of positive entropy on a $K3$ surface $S$,
and $T$ a homogeneous curve ($\BPP^1$ or elliptic).

(3) The projectivity of $X$ in Theorem \ref{Cor3} is used in applying the characterization
of a quotient of an abelian threefold (cf. \cite{SW}).
The projectivity of $X$ in Theorem \ref{ThF} is used in running the minimal model program
(only for uniruled varieties).
\end{remark}

The following Theorem \ref{ThA} is a direct consequence of \cite[Theorem 1.1]{Z-Tits}; see also
the discussion in \cite[\S 6]{Ca}.
It extends the classical Tits alternative \cite[Theorem 1]{Ti}.

A compact K\"ahler manifold $X$ is {\it ruled}
if it is bimeromorphic to a manifold with
a $\BPP^1$-fibration.
By a result of Matsumura, $X$ is ruled if $\Aut_0(X)$ is not a compact torus
(cf. \cite[Proposition 5.10]{Fu}).
When $X$ is a compact complex K\"ahler manifold (or a normal projective variety),
set $L := H^2(X, \Z)/(\torsion)$ $($resp.\ $L := \NS(X)/(\torsion))$,
$L_{\R} := L \otimes_{\Z} \R$, and $L_{\C} := L \otimes_{\Z} \C$.

\begin{theorem}\label{ThA}
Let $X$ be a compact K\"ahler $($resp.\ projective$)$ manifold
of dimension $n$ and $G \le \Aut(X)$ a subgroup.
Then one of the following properties holds.
\begin{itemize}
\item[(1)]
$G | L_{\C} \ge \Z * \Z$ $($the non-abelian free group of rank two$)$,
and hence $G \ge \Z * \Z$.
\item[(2)]
$G | L_{\C}$ is virtually solvable and
$G \ge K \cap L(\Aut_0(X)) \ge \Z * \Z$ where
$L(\Aut_0(X))$ is the linear part of
$\Aut_0(X)$ {\rm (cf.~\cite[Definition 3.1, p. 240]{Fu})} and
$K = \Ker(G \to \GL(L_{\C}))$, so $X$ is ruled {\rm (cf. \cite[Proposition 5.10]{Fu})}.
\item[(3)]
There is a finite-index solvable subgroup $G_1$ of $G$
such that the null subset $N(G_1)$ of $G_1$
is a normal subgroup of $G_1$ and
$G_1/N(G_1) \cong \Z^{\oplus r}$
for some $r \le n - 1$.
\end{itemize}
In particular, either $G \ge \Z * \Z$ or $G$ is virtually solvable.
In Cases $(2)$ and $(3)$ above, $G | L_{\C}$ is finitely generated.
\end{theorem}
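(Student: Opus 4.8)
I would deduce Theorem~\ref{ThA} from the abstract dichotomy of \cite[Theorem~1.1]{Z-Tits}, refining it by applying the classical Tits alternative \cite[Theorem~1]{Ti} to the linear group $G|L_{\C}\subseteq\GL(L_{\C})$ in order to locate the non-abelian free subgroup. The elementary input used repeatedly is this: if $F$ is a free group of rank $\ge 2$ and $N\lhd F$ with $F/N$ containing no copy of $\Z*\Z$ (for instance $F/N$ virtually solvable, or abelian), then $N\ne\{1\}$; and since every nontrivial normal subgroup of $F$ is again free, of rank $\ge 2$ when of finite index and of infinite rank otherwise, $N$ itself contains a copy of $\Z*\Z$. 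Likewise every finite-index subgroup of $\Z*\Z$ contains a copy of $\Z*\Z$ (Nielsen--Schreier).

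First I would quote \cite[Theorem~1.1]{Z-Tits}: either $G\ge\Z*\Z$, or there is a finite-index solvable subgroup $G_1\le G$ with $N(G_1)\lhd G_1$ and $G_1/N(G_1)\cong\Z^{\oplus r}$ for some $r\le n-1$ (the rank bound being the one for a group of commuting automorphisms). In the second case we are in~(3) and done, so assume $G\ge\Z*\Z$, and apply the classical Tits alternative to $G|L_{\C}$. If $G|L_{\C}\ge\Z*\Z$, we are in case~(1); note $G\ge\Z*\Z$ then follows anyway by lifting free generators and using that a homomorphism onto a free group splits. Otherwise $G|L_{\C}$ is virtually solvable: choose a finite-index subgroup $G'\le G$ whose image in $\GL(L_{\C})$ is solvable, and put $K:=\Ker(G\to\GL(L_{\C}))$. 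A rank-$2$ free subgroup $F\le G'$ meets $K$ in a normal subgroup whose quotient embeds into the solvable group $G'|L_{\C}$, so by the elementary input $F\cap K$, hence $K$, contains a copy of $\Z*\Z$.

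Next I would move this free subgroup into the linear part of $\Aut_0(X)$. Since $K$ acts trivially on $L_{\C}$ it fixes a K\"ahler class (in the projective case, the class of an ample divisor), so by the theorem of Fujiki and Lieberman (see \cite{Fu}) the identity component $\Aut_0(X)$ is of finite index in $K$; hence $K\cap\Aut_0(X)\ge\Z*\Z$. Applying the elementary input once more to the quotient map $\Aut_0(X)\to\Aut_0(X)/L(\Aut_0(X))$, whose target is a compact complex torus and in particular abelian (\cite[Definition~3.1]{Fu}), yields $K\cap L(\Aut_0(X))\ge\Z*\Z$. In particular $L(\Aut_0(X))\ne\{1\}$, so $\Aut_0(X)$ is not a compact torus and $X$ is ruled by Matsumura's theorem (\cite[Proposition~5.10]{Fu}). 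This is case~(2).

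Finally, the ``in particular'' clause is immediate: $G\ge\Z*\Z$ in cases~(1) and~(2), while $G$ is virtually solvable in case~(3). For finite generation, in both~(2) and~(3) the image $G|L_{\C}$ is virtually solvable (by construction in~(2); because $G$ itself is, in~(3)), and it preserves the lattice $L$, so it is isomorphic to a virtually solvable subgroup of $\GL(L)\cong\GL_{\rank L}(\Z)$, which is polycyclic-by-finite by the classical fact that solvable subgroups of $\GL_m(\Z)$ are polycyclic, hence finitely generated. The one genuinely non-formal ingredient is \cite[Theorem~1.1]{Z-Tits}, which I treat as a black box; the rest is the bookkeeping of a single free subgroup through the successive quotients $G\to G|L_{\C}$, $K\to K/\Aut_0(X)$ and $\Aut_0(X)\to\Aut_0(X)/L(\Aut_0(X))$, and the main place requiring care is ensuring that each finite-index passage still leaves a genuine copy of $\Z*\Z$, together with the K\"ahler-versus-projective distinction in the use of Fujiki--Lieberman.
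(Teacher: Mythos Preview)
Your argument is correct and reaches the same conclusion through essentially the same ingredients (Tits alternative, \cite[Theorem~1.1]{Z-Tits}, Fujiki--Lieberman, the exact sequence $1\to L(\Aut_0(X))\to\Aut_0(X)\to T\to 1$), but organized differently from the paper. The paper first assumes (1) fails, so $G|L_{\C}$ is virtually solvable, and then splits on whether $K$ is virtually solvable: if so, Lemma~\ref{solext} gives $G$ virtually solvable and \cite[Theorem~1.2]{Z-Tits} yields case~(3); if not, a contradiction argument (assuming (2) fails, so $K\cap L(\Aut_0(X))$ is virtually solvable, then climbing back up via Lemma~\ref{solext} and Lemma~\ref{G|NS}) forces~(2). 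You instead invoke \cite[Theorem~1.1]{Z-Tits} first to isolate case~(3), and then, when $G\ge\Z*\Z$, \emph{constructively} push a single rank-two free subgroup down through the quotients $G\to G|L_{\C}$, $K\to K/(K\cap\Aut_0(X))$, and $\Aut_0(X)\to T$ using your ``elementary input'' about normal subgroups of free groups. This is a cleaner bookkeeping of where the free subgroup lives and avoids the contradiction detour. Your finite-generation argument via Mal'cev's theorem (solvable subgroups of $\GL_m(\Z)$ are polycyclic) is also different from, and arguably more direct than, the paper's route through Theorem~\ref{ThB}, which bounds generators via the unipotent normal series.

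One small imprecision: you write ``$\Aut_0(X)$ is of finite index in $K$,'' but $\Aut_0(X)$ need not be contained in $G$, let alone in $K$. What Fujiki--Lieberman gives is that $\Aut_0(X)$ has finite index in $\Aut_\omega(X)$, hence $K\cap\Aut_0(X)$ has finite index in $K$; your subsequent conclusion $K\cap\Aut_0(X)\ge\Z*\Z$ is then correct for the reason you state.
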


\par \vskip 0.5pc \noindent
{\bf Acknowledgement.} We are very grateful to the referee for suggesting
the implication ``Theorem \ref{Ssimple} $\Rightarrow$ Theorem \ref{Cor3}"
and giving its proof when $N(G) = 1$, as well as lots of constructive suggestions.
The last-named author is supported by an ARF of NUS.

\section{Entropy and Algebraic group action}\label{Pf}

In this section, we shall recall some definitions and technical results
needed in the proofs and establish some easy consequences or already known facts.

\begin{setup} {\bf Terminology and notation}\label{setup2.1}
{\rm
are as in \cite{KM}. Below are some more conventions.

Let $X$ be a compact complex K\"ahler manifold (resp.~a normal projective variety).
As in the introduction,
set $L := H^2(X, \Z)/(\torsion)$ $($resp.~$L := \NS(X)/(\torsion))$,
$L_{\R} := L \otimes_{\Z} \R$, and $L_{\C} := L \otimes_{\Z} \C$.
Let $\widebar{P(X)}$ be the closure of the K\"ahler cone
(resp. the {\it nef cone} $\Nef(X)$, i.e., the closure of the ample cone) of $X$.
Elements in $\widebar{P(X)}$ are called {\it nef}.

For $g \in \Aut(X)$, let
$$d_1(g) := \max\{|\lambda| \, ; \, \lambda \,\, \text{is
an eigenvalue of} \,\, g^* \, | \, H^{1,1}(X) \}$$ be the {\it first dynamical degree} of $g$
(cf.~\cite[\S 2.2]{DS}).
By the generalization of Perron-Frobenius theorem (cf.~\cite{Bi})
applied to $\widebar{P(X)}$,
for every $g \in \Aut(X)$, there is a nonzero nef class $L_g$ (not unique) such that
$$g^* L_g = d_1(g) L_g .$$

We remark that $g$ is of null entropy if and only if so is $g^{-1}$;
if this is the case, then for every non-trivial
$(1, 1)$-class $M$ with $g^* M = \lambda M$, we have
$|\lambda| = 1$.

$G \, | \, Y$ denotes a naturally (from the context) induced action of $G$ on $Y$.
A subvariety $Z \subset X$ is $G$-{\it periodic} if
$Z$ is stabilized by a finite-index subgroup of $G$.
For a complex torus $X$
(as a variety), we have $\Aut_{\variety}(X) = T \rtimes \Aut_{\group}(X)$
where $T = \Aut_0(X)$ $(\cong X$) consists of all the translations of $X$
and $\Aut_{\group}(X)$ is the group of bijective homomorphisms of $X$ (as a torus).

A group $G$ is {\it virtually unipotent}
(resp.\ {\it virtually abelian}, or {\it virtually abelian of rank $r$})
if a finite-index subgroup $G_1$ of
$G$ is unipotent (resp.\ abelian, or isomorphic to $\Z^{\oplus r}$).
A group $G$ is {\it virtually solvable} (resp.\ {\it almost abelian},
or {\it almost abelian of finite rank $r$}, cf.\
definition after \cite[Thm 1.2]{Og07})
if it has a finite-index subgroup $G_1$ and an exact sequence
$$1 \to H \to G_1 \to Q \to 1$$
such that $H$ is finite, and $Q$ is solvable (resp.\ abelian, or
isomorphic to $\Z^{\oplus r}$);
by Lemma \ref{modfin}, $G$ is almost abelian of finite rank $r$ if and only if
$G$ is virtually abelian of rank $r$;
replacing $G_1$ by a finite-index subgroup, we may assume
that
the conjugation action of $G_1$ (and hence of $H$)
on $H$ is trivial;
so in the above definition of virtually solvable group, we may also assume
$H = 1$, so that our definition here coincides with the usual definition.
}
\end{setup}

Theorem \ref{ThB} below follows from Oguiso
\cite[Lemma 2.5]{Og07} and Tits \cite[Theorem 1]{Ti}.

\begin{theorem}\label{ThB}
Let $X$ be a compact K\"ahler $($or projective$)$ manifold
of dimension $n$ and $G$ a subgroup of $\Aut(X)$.
Then we have:
\begin{itemize}
\item[(1)]
Suppose that $G$ is of null entropy. Then $G \, | \, L_{\C}$ is virtually unipotent
and hence virtually solvable $($cf.\ $\ref{setup2.1}$ for notation$)$. Moreover,
$G \,| \, L_{\C}$ is finitely generated.
\item[(2)]
Suppose that $G \, | \, L_{\C} \ge \Z * \Z$.
Then $G$ contains an element of positive entropy.
\end{itemize}
\end{theorem}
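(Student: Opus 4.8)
The plan is to reduce everything to the group-theoretic Tits alternative applied to the linear group $G \, | \, L_{\C} \le \GL(L_{\C})$, combined with the spectral characterization of entropy recalled in $\ref{setup2.1}$. For part (1), I would first recall that by definition $G$ is of null entropy means each $g \in G$ has $h(g) = 0$, which by the Gromov--Yomdin identity forces the spectral radius of $g^*$ on $\bigoplus_i H^i(X,\C)$ to be $1$; in particular every eigenvalue of $g^* \, | \, H^{1,1}(X)$, and hence of $g^* \, | \, L_{\C}$, has absolute value $1$. Now $G \, | \, L_{\C}$ is a subgroup of $\GL(L_{\C})$ preserving the integral lattice $L$, so by Kronecker's theorem every such eigenvalue is a root of unity; thus each element of $G \, | \, L_{\C}$ is quasi-unipotent. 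By the Tits alternative (\cite[Theorem 1]{Ti}), a finitely generated linear group is either virtually solvable or contains a non-abelian free group $\Z * \Z$; but a group all of whose elements are quasi-unipotent cannot contain $\Z * \Z$ (a non-abelian free group always contains elements of infinite order whose eigenvalues are not all roots of unity — equivalently, by Tits, such a group is not virtually solvable would then have exponential growth incompatible with quasi-unipotence). Here one must be slightly careful because $G$ itself may not be finitely generated; the standard fix, which I would spell out, is that any finitely generated subgroup $G' \le G$ gives $G' \, | \, L_{\C}$ virtually solvable, and then a Burnside/Kolchin-type argument (Kolchin's theorem, after passing to a finite-index subgroup on which the action is unipotent because the eigenvalues are roots of unity of bounded order — the order being bounded since $L$ has fixed rank) shows $G' \, | \, L_{\C}$ is virtually unipotent with bounded index and bounded unipotency, so the whole $G \, | \, L_{\C}$ is virtually unipotent. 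Finite generation of $G \, | \, L_{\C}$ then follows since a virtually unipotent subgroup of $\GL(L)$ for a finitely generated abelian group $L$ is finitely generated (it sits inside a finitely generated nilpotent group, or one invokes \cite[Lemma 2.5]{Og07} directly).

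For part (2), the contrapositive of part (1)'s mechanism does most of the work: if $G \, | \, L_{\C}$ contains $\Z * \Z$, then it is not virtually solvable, so in particular it is not virtually unipotent; hence by part (1) the group $G$ is \emph{not} of null entropy, i.e.\ there exists $g \in G$ with $h(g) > 0$, which is exactly an element of positive entropy. Alternatively, and more in the spirit of what \cite[Lemma 2.5]{Og07} supplies, one can argue directly: a non-abelian free subgroup of $\GL(L_{\C})$ cannot consist entirely of quasi-unipotent elements, so some element of $G \, | \, L_{\C}$ has an eigenvalue off the unit circle, and by Gromov--Yomdin that element has positive entropy.

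The only genuine subtlety — and the step I would be most careful about — is the passage from finitely generated subgroups to all of $G$ in part (1), together with making precise that "quasi-unipotent with bounded order" upgrades to "virtually unipotent with uniformly bounded index". This is where the hypothesis that $L$ has finite rank is essential: it bounds the possible orders of roots-of-unity eigenvalues (only finitely many roots of unity can be eigenvalues of an integer matrix of bounded size), so after replacing $G$ by the finite-index subgroup acting trivially modulo this bounded torsion one lands in a unipotent group by Kolchin's theorem; the index is bounded uniformly over all finitely generated subgroups, which lets one take the union. Since the excerpt attributes the statement to Oguiso \cite[Lemma 2.5]{Og07} and Tits \cite[Theorem 1]{Ti}, I would present the argument as an assembly of these two inputs with the Gromov--Yomdin translation, rather than reproving the linear-algebra lemmas from scratch.
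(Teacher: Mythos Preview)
Your strategy matches the paper's: use Kronecker to convert null entropy into ``every element of $G|L_{\C}$ is quasi-unipotent with eigenvalue-order bounded by $\rank L$'', invoke the Tits alternative, then upgrade to virtually unipotent via a Burnside-type bounded-exponent argument; part~(2) is exactly the contrapositive of~(1), as the paper also states.

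One step is not yet a proof, though. Your parenthetical reason why a group of quasi-unipotent matrices cannot contain $\Z*\Z$ --- ``a non-abelian free group always contains elements whose eigenvalues are not all roots of unity'' --- is circular: that is the conclusion you want, and free subgroups of $\GL_n(\C)$ are routinely generated by unipotent elements (e.g.\ the two elementary transvections in $\SL_2(\Z)$). The paper, following Oguiso, closes this as follows: the set $U=\{g\in G : g^*|L_{\C}\ \text{is unipotent}\}$ is a \emph{normal subgroup} of $G$; if $\langle g_1,g_2\rangle|L_{\C}\cong\Z*\Z$ then suitable powers $g_1^s,g_2^s$ lie in $U$ and still freely generate a copy of $\Z*\Z$ inside the nilpotent group $U|L_{\C}$, a contradiction. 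Your later ``careful'' version is closer to the truth and is essentially the paper's second phase --- pass to the connected Zariski closure $\bar G=\bar U\rtimes\bar T$ and note that the image of $G$ in the torus $\bar T$ is torsion of bounded exponent, hence finite by Burnside --- but you should name that mechanism explicitly rather than the vague ``acting trivially modulo this bounded torsion'' plus an unspecified appeal to Kolchin. Finite generation then comes, as in the paper, from a normal series of the unipotent group with one-dimensional quotients.
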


Let $X$ be a compact K\"ahler $($resp.\ projective$)$ manifold of dimension $n$.
A sequence $0 \ne L_1 \cdots L_k \in H^{k,k}(X)$ ($1 \le k < n$)
is {\it quasi-nef} if it is inductively obtained in
the following way: first $L_1 \in \widebar{P(X)}$;
once $L_1 \cdots L_{j-1} \in H^{j-1, j-1}(X)$
is defined, we define
$$L_1 \cdots L_j = \lim_{t \to \infty} L_1 \cdots L_{j-1} \cdot M_t$$
for some $M_t \in \widebar{P(X)}$ (cf.\ \cite[\S 2.2]{Z-Tits}).
We remark that for $j \ge 2$,
the $L_j$ which appears
in the construction of the $(j, j)$-class $L_1 \cdots L_j$, may not belong to $\widebar{P(X)}$.
A group $G \le \Aut(X)$ is {\it polarized}
by the quasi-nef sequence $L_1 \cdots L_k$ ($1 \le k < n$)
if $$g^*(L_1 \cdots L_k) = \chi_1(g) \cdots \chi_k(g) (L_1 \cdots L_k)$$
for some characters
$\chi_j : G \to (\R_{> 0}, \times)$.

Theorem \ref{ThC}  gives criteria of virtual solvability,
with (3) proved in \cite[Theorem 1.2]{Z-Tits}.

\begin{theorem}\label{ThC}
Let $X$ be a compact K\"ahler $($resp.\ projective$)$ manifold
of dimension $n$ and $G$ a subgroup of $\Aut(X)$.
Then we have $($cf.\ $\ref{setup2.1}$ for notation of $L_{\C}):$
\begin{itemize}
\item[(1)]
Suppose that $G \, | \, L_{\C}$ is virtually solvable and
its Zariski-closure in $\GL(L_{\C})$
is connected. Then
$G$ is polarized by a quasi-nef sequence $L_1 \cdots L_k$ $(1 \le k < n)$.
\item[(2)]
Conversely, suppose that $G$ is polarized by a
quasi-nef sequence $L_1 \cdots L_k$ $(1 \le k < n)$.
Then $G \, | \, L_{\C}$ is virtually solvable.
\item[(3)]
$G \, | \,  L_{\C}$ is virtually solvable if and only if
there exists a finite-index subgroup $G_1$ of $G$ such that
$N(G_1) \lhd G_1$
and $G_1 / N(G_1) \cong \Z^{\oplus r}$
for some $r \le n-1$.
\end{itemize}
\end{theorem}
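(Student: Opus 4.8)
The plan is to take part (3) as given (it is \cite[Theorem 1.2]{Z-Tits}) and prove (1) and (2); the remaining equivalences then follow by chaining the three parts. Both (1) and (2) hinge on the positivity carried by the nef cone, and the main tool is a Perron--Frobenius/Birkhoff type fixed-ray statement on salient cones.

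For (1): the hypotheses say precisely that the Zariski-closure $\overline{G}\le\GL(L_{\C})$ of $G\,|\,L_{\C}$ is a connected solvable algebraic group (connectedness forces $\overline{G}$ to coincide with the Zariski-closure of any finite-index solvable subgroup of $G\,|\,L_{\C}$, hence to be solvable). I would first produce a common nef eigenvector of $\overline{G}$ itself: a connected solvable group acting linearly on $L_{\R}$ and preserving the salient, full-dimensional closed cone $\widebar{P(X)}$ admits a nonzero eigenvector $L_1\in\widebar{P(X)}$ with eigen-character $\chi_1\colon\overline{G}\to(\R_{>0},\times)$. This is proved by induction on the derived length of $\overline{G}$: the base case (a commutative group, in particular a single element) is the generalized Perron--Frobenius/Birkhoff theorem \cite{Bi} (cf.\ \cite[\S2.2]{DS}), and the inductive step applies it on the subspace fixed by the derived subgroup, after checking that this subspace still meets $\widebar{P(X)}$. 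One then iterates in higher bidegree: given the $\overline{G}$-eigen quasi-nef class $L_1\cdots L_{j-1}$ with character $\chi_1\cdots\chi_{j-1}$, the closed cone in $H^{j,j}(X)$ generated by all limits $\lim_{t\to\infty}L_1\cdots L_{j-1}\cdot M_t$ with $M_t\in\widebar{P(X)}$ is again $\overline{G}$-invariant, salient and nonzero, so the same statement yields $L_1\cdots L_j$ together with $\chi_j$. Stopping at $j=n-1$ produces a quasi-nef sequence polarizing $G$, the characters $\chi_j$ being obtained by restriction from $\overline{G}$ to the Zariski-dense subgroup $G$.

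For (2): let $\chi_1,\dots,\chi_k\colon G\to(\R_{>0},\times)$ be the characters witnessing the polarization and put $G^\circ:=\bigcap_j\ker\chi_j\lhd G$, so that $G/G^\circ$ is abelian and every $g\in G^\circ$ fixes the whole quasi-nef flag $L_1,\ L_1L_2,\ \dots,\ L_1\cdots L_k$. The key step is to deduce that each such $g$ has null entropy, i.e.\ $d_1(g)=1$: one propagates ``$g$ fixes $L_1\cdots L_j$'' up the flag using the positivity of quasi-nef classes --- the mixed Hodge--Riemann bilinear relations, equivalently the log-concavity $d_i(g)^2\ge d_{i-1}(g)\,d_{i+1}(g)$ together with the Poincar\'e duality $d_i(g)=d_{n-i}(g^{-1})$ --- to force all intermediate dynamical degrees, and in particular $d_1(g)$, to equal $1$; this is where the length $n-1$ of the flag is essential. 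Hence $G^\circ$ is of null entropy, so $G^\circ\,|\,L_{\C}$ is virtually unipotent, in particular virtually solvable, by Theorem \ref{ThB}(1). Finally $G\,|\,L_{\C}$ is virtually solvable: it is linear, so by the Tits alternative \cite{Ti} the only obstruction would be a subgroup $\cong\Z*\Z$, whose image in the abelian $G/G^\circ$ is abelian, forcing a finite-index --- hence still non-abelian free --- subgroup of it into the virtually solvable group $G^\circ\,|\,L_{\C}$, a contradiction.

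The step I expect to be the main obstacle is the positivity input in each part. In (1) it is the verification that the iterated cones in $H^{j,j}(X)$ generated by the classes $\lim_{t}L_1\cdots L_{j-1}\cdot M_t$ are genuinely salient and $\overline{G}$-stable, and that the reduction along the derived series stays inside them. In (2) it is the implication ``$g$ fixes the quasi-nef flag $\Rightarrow d_1(g)=1$'', which rests on the mixed Hodge--Riemann relations and crucially uses that the flag has length $n-1$. Both ingredients sit in the circle of ideas of \cite[\S2]{Z-Tits} and \cite[\S2.2]{DS}.
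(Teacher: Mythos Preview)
Your approach is essentially the paper's: both defer (1) to \cite[Theorem 1.2]{Z-Tits} (your Lie--Kolchin outline on iterated quasi-nef cones is exactly the argument carried out there), and for (2) both show that the kernel of the character map is $N(G)$, apply Theorem~\ref{ThB} to get $N(G)\,|\,L_{\C}$ virtually unipotent, and finish using that $G/N(G)$ is abelian (the paper invokes Lemma~\ref{solext}; your Tits-alternative reduction amounts to the same thing). For (3) both defer the ``only if'' direction to \cite{Z-Tits} and get the ``if'' direction from Theorem~\ref{ThB} together with the abelian-quotient argument.

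The one gap in your write-up of (2): you form $G^\circ=\bigcap_{j\le k}\ker\chi_j$ and then say ``the length $n-1$ of the flag is essential'' for $G^\circ\subseteq N(G)$ --- but the hypothesis only supplies $k$ characters, possibly with $k<n-1$. For $k<n-1$ the inclusion $G^\circ\subseteq N(G)$ can fail outright: on $X=S\times C$ with $S$ a K3 surface whose $\Aut(S)$ contains a non-abelian free group and $C$ a curve, the whole group $G=\Aut(S)\times\{\id_C\}$ fixes the nef class $L_1=\pr_C^*[\mathrm{pt}]$, so $G^\circ=G$, yet $G$ contains elements of positive entropy. The paper avoids this by writing $\varphi$ with $n{-}1$ coordinates $(\log\chi_1,\dots,\log\chi_{n-1})$ and attributing $\Ker\varphi=N(G)$ to ``the proof of \cite[Theorem 1.2]{Z-Tits}'', i.e.\ the polarization is first extended to full length $n-1$ there. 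You should either invoke that extension explicitly, or simply restrict (2) to $k=n-1$, which is what (1) produces and is all that is needed for the equivalence with (1) and (3).
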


We need the following lemmas for the proof of theorems.

\begin{lemma}\label{modfin}
Let $G$ be a group, and $H \lhd G$ a finite normal subgroup.
Then we have:
\begin{itemize}
\item[(1)]
Suppose that for some $r \ge 1$ and $g_i \in G$ we have:
$$G/H = \langle \bar g_1 \rangle \times \cdots \times \langle \bar g_r \rangle
\cong \Z^{\oplus r} .$$
Then there is an integer $s > 0$ such that the subgroup
$G_1 := \langle g_1^s, \dots, g_r^s \rangle$ satisfies
$$G_1 = \langle g_1^s \rangle \times \cdots \times \langle g_r^s \rangle
\cong \Z^{\oplus r}$$
and is of finite-index in $G$; further,
the quotient map $\gamma : G \to G/H$ restricts to an
isomorphism $\gamma \, | \, G_1 : G_1 \to \gamma(G_1)$
onto a finite-index subgroup of $G/H$.
\item[(2)]
A group is almost abelian of finite rank $r$ if and only if it is
virtually abelian of rank $r$.
\end{itemize}
\end{lemma}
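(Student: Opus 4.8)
The plan is to prove part (1) first by a direct finiteness argument, and then deduce part (2) from it. For part (1), set $n := |H|$. The idea is to choose $s$ so that the generators $g_i^s$ avoid the finite subgroup $H$ entirely except at the identity. First I would observe that the conjugation action of $G$ on $H$ factors through the finite group $\Aut(H)$, so after replacing each $g_i$ by a fixed power we may assume each $g_i$ centralizes $H$; I absorb this power into the eventual $s$. Next, since $G/H \cong \Z^{\oplus r}$ is torsion-free, for each $i$ the element $g_i$ has infinite order in $G$, and the cyclic group $\langle g_i \rangle$ meets $H$ in a finite (hence trivial or finite cyclic) subgroup; more importantly, I would use that $\langle \bar g_1, \dots, \bar g_r \rangle$ is free abelian to control intersections: an element $g_1^{a_1} \cdots g_r^{a_r}$ lies in $H$ only if all $\bar g_i^{a_i}$ cancel in $G/H$, i.e. only if $a_1 = \cdots = a_r = 0$ in $G/H$'s coordinates — wait, that already forces each $a_i = 0$ since the $\bar g_i$ are a free basis. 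So actually $\langle g_1, \dots, g_r\rangle \cap H = \{e\}$ is almost immediate once the $\bar g_i$ are independent, but the subtlety is that $\langle g_1, \dots, g_r \rangle$ need not be \emph{abelian} or \emph{free} abelian: the commutators $[g_i, g_j]$ lie in $H$ and could be nontrivial.

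This is where the main work lies, and I expect it to be the main obstacle: upgrading "commutators lie in a finite normal subgroup" to "a finite-index subgroup is genuinely free abelian of rank $r$." The plan is: having arranged that each $g_i$ centralizes $H$, note that $[g_i, g_j] \in H$ is then central in $G_0 := \langle g_1, \dots, g_r, H\rangle$, so $G_0$ is nilpotent of class $\le 2$ with $[G_0, G_0] \subseteq H$ finite. Now take $s := n! \cdot (\text{earlier power})$, or more carefully $s$ a multiple of $n = |H|$ and of the exponent of $H$. Then for $G_1 := \langle g_1^s, \dots, g_r^s\rangle$: using the class-$2$ commutator identity $(g_i^s)(g_j^s)(g_i^s)^{-1}(g_j^s)^{-1} = [g_i, g_j]^{s^2}$ (valid since $[g_i,g_j]$ is central), and $s^2$ being divisible by the exponent of $H$, these commutators are trivial, so $G_1$ is abelian. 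It is finitely generated abelian; its image in $G/H$ is $\langle \bar g_1^s, \dots, \bar g_r^s\rangle \cong \Z^{\oplus r}$, so $G_1$ surjects onto $\Z^{\oplus r}$, hence $G_1 \cong \Z^{\oplus r} \oplus (\text{torsion})$, and the torsion subgroup is contained in $H \cap G_1$. Finally $H \cap G_1$: choose $s$ also divisible enough that any torsion element of $G_1$, being a word $g_1^{sa_1}\cdots g_r^{sa_r}$ mapping to $0$ in $\Z^{\oplus r}$, forces $a_i = 0$, hence the element is trivial; so $G_1 \cong \Z^{\oplus r}$ with the product decomposition $\langle g_1^s\rangle \times \cdots \times \langle g_r^s\rangle$. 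Finite index of $G_1$ in $G$: $[G : G_1] \le [G : G_0][G_0 : G_1]$ where $[G_0 : \langle g_1^s,\dots,g_r^s,H\rangle] \le s^r$ and $[\langle \cdots, H\rangle : G_1] \le |H|$, while $G_0$ has finite index in $G$ since $G_0 H = $ preimage of the finite-index subgroup $\langle \bar g_i \rangle \le G/H$ and $G/G_0 \hookrightarrow (G/H)/\langle \bar g_i\rangle$ times $|H|$; all finite. The restriction $\gamma | G_1$ is injective because $\ker(\gamma|G_1) = G_1 \cap H = \{e\}$, and its image $\gamma(G_1) = \langle \bar g_i^s\rangle$ has finite index $\le s^r$ in $\langle \bar g_i\rangle$, which has finite index in $G/H$.

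For part (2), the backward direction is trivial (virtually abelian of rank $r$ with $H = 1$ in the defining sequence is a special case of almost abelian of rank $r$). For the forward direction, suppose $G$ is almost abelian of rank $r$: there is a finite-index $G_1 \le G$ with $1 \to H \to G_1 \to Q \to 1$, $H$ finite, $Q \cong \Z^{\oplus r}$. Lift a free basis of $Q$ to $g_1, \dots, g_r \in G_1$; then $G_1 = \langle g_1, \dots, g_r, H\rangle$ and $H \lhd G_1$ is finite normal with $G_1/H \cong \Z^{\oplus r}$, so part (1) applies and produces a finite-index $\Z^{\oplus r}$ inside $G_1$, hence inside $G$; thus $G$ is virtually abelian of rank $r$. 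I do not expect any obstacle here beyond bookkeeping; the only real content of the lemma is the finiteness-of-index and commutator-killing argument in part (1).
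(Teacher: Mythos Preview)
Your proof is correct and reaches the same conclusion, but the key commuting step differs from the paper's. To make the $g_i^s$ pairwise commute, the paper uses a direct pigeonhole: since all commutators $[g_1^t, g_2]$ ($t \ge 1$) lie in the finite set $H$, two of them coincide, say $[g_1^{t_1}, g_2] = [g_1^{t_2}, g_2]$ with $t_1 < t_2$, and a one-line manipulation gives $[g_1^{t_2-t_1}, g_2] = 1$; iterating over all pairs $(i,j)$ and taking the product of the resulting exponents yields $s$. You instead first pass to powers centralizing $H$ (via the finite map $G \to \Aut(H)$) and then invoke the class-$2$ commutator identity $[g_i^s, g_j^s] = [g_i,g_j]^{s^2}$, killing the commutator by taking $s$ divisible by the exponent of $H$. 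Your route is more structural and gives an explicit $s$; the paper's pigeonhole is more elementary and avoids both the preliminary reduction and the commutator calculus. One small inaccuracy on your side: $G_0 = \langle g_1,\dots,g_r,H\rangle$ need not be nilpotent of class $\le 2$ when $H$ is non-abelian, but this is harmless since the identity you actually use only requires $[g_i,g_j]$ to commute with $g_i$ and $g_j$, which you have arranged. The remainder of your argument (torsion-freeness of $G_1$ via the free basis $\bar g_i$, finite index, and the deduction of part (2)) matches the paper's implicit reasoning; the paper simply states that once the $g_i^s$ commute the rest follows.
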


\begin{proof}
(2) follows from (1). For (1),
we only need to find $s > 0$ such that $g_i^s$ commutes with $g_j^s$ for all $i, j$.
Since $G/H$ is abelian, the commutator subgroup $[G, G] \le H$.
Thus the commutators $[g_1^t, g_2]$ ($t > 0$) all belong to $H$.
The finiteness of $H$ implies that $[g_1^{t_1}, g_2] = [g_1^{t_2}, g_2]$ for some $t_2 > t_1$,
which implies that $g_1^{s_{12}}$ commutes with $g_2$, where $s_{12} := t_2 - t_1$.
Similarly, we can find an integer $s_{1j} > 0$ such that $g_1^{s_{1j}}$ commutes with $g_j$.
Set $s_1 := s_{12} \times \cdots \times s_{1r}$. Then $g_1^{s_1}$ commutes with
every $g_j$.
Similarly, for each $i$,
we can find an integer $s_{i} > 0$ such that $g_i^{s_i}$ commutes with $g_j$ for all $j$.
Now $s := s_1 \times \cdots \times s_{r}$ will do the job. This proves the lemma.
\end{proof}

\begin{lemma}\label{rad}
Let $X$ be a compact complex K\"ahler manifold $($resp. normal projective variety$)$,
and $G$ a subgroup of $\Aut(X)$.
Then, replacing $G$ by a suitable finite-index subgroup,
the following are true $($cf.~$\ref{setup2.1}$ for notation $L_{\C})$.
\begin{itemize}
\item[(1)]
There is a normal subgroup $U \lhd G$ such that
$U \, | \, L_{\C}$ is unipotent and
$G/U$ is embedded as a Zariski-dense subgroup in a reductive complex linear algebraic group.
\item[(2)]
There is a normal subgroup $R \lhd G$ such that
$R \, | \, L_{\C}$ is solvable and
$G/R$ is embedded as a Zariski-dense subgroup
in a semi-simple complex linear algebraic group $H$.
\end{itemize}
\end{lemma}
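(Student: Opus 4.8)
The plan is to derive both statements from the Levi/radical structure of the Zariski closure of the image of $G$ in $\GL(L_{\C})$. Write $\iota : G \to \GL(L_{\C})$ for the natural representation, $G \, | \, L_{\C} = \iota(G)$ for its image, and $\overline{G} \subseteq \GL(L_{\C})$ for the Zariski closure of $G \, | \, L_{\C}$; this is a complex linear algebraic group. First I would replace $G$ by the finite-index subgroup $\iota^{-1}(\overline{G}^{0})$ (of finite index, because the component group $\overline{G}/\overline{G}^{0}$ is finite), so that from now on $\overline{G}$ is \emph{connected}. This single replacement serves for both parts; it is genuinely needed only for (2), since the unipotent radical is insensitive to components while the quotient of a disconnected group by its radical need only have semi-simple identity component.

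For (1), let $R_u(\overline{G}) \lhd \overline{G}$ be the unipotent radical, so that $\overline{G}/R_u(\overline{G})$ is reductive, and let $q : \overline{G} \to \overline{G}/R_u(\overline{G})$ be the quotient morphism. Put
$$U := \iota^{-1}\bigl( \iota(G) \cap R_u(\overline{G}) \bigr).$$
Since $R_u(\overline{G})$ is normal in $\overline{G} \supseteq \iota(G)$, the subgroup $\iota(G) \cap R_u(\overline{G})$ is normal in $\iota(G)$, hence $U \lhd G$; and $U \, | \, L_{\C} = \iota(G) \cap R_u(\overline{G})$ is a subgroup of the unipotent group $R_u(\overline{G})$, so each of its elements is a unipotent matrix, i.e.\ $U \, | \, L_{\C}$ is unipotent. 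By construction (and since $\Ker \iota \subseteq U$) the subgroup $U$ is exactly the kernel of the composite $G \to \overline{G} \to \overline{G}/R_u(\overline{G})$, so $G/U$ is isomorphic to $q(\iota(G)) \le \overline{G}/R_u(\overline{G})$, and $q(\iota(G))$ is Zariski-dense there because $\iota(G)$ is Zariski-dense in $\overline{G}$ and $q$ is a surjective morphism of algebraic groups (by continuity $q(\overline{G}) = q(\overline{\iota(G)}) \subseteq \overline{q(\iota(G))}$, and $q$ is onto). This proves (1), with the reductive group $\overline{G}/R_u(\overline{G})$.

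For (2), I would repeat the argument verbatim with the solvable radical $R(\overline{G}) \lhd \overline{G}$ (the largest connected normal solvable subgroup) in place of $R_u(\overline{G})$. Since $\overline{G}$ is now connected, $H := \overline{G}/R(\overline{G})$ is semi-simple, and
$$R := \iota^{-1}\bigl( \iota(G) \cap R(\overline{G}) \bigr) \lhd G$$
satisfies $R \, | \, L_{\C} = \iota(G) \cap R(\overline{G}) \le R(\overline{G})$, which is solvable as a subgroup of a solvable group, while $G/R$ is the image of $\iota(G)$ in $H$ and hence is embedded in $H$ and Zariski-dense.

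I do not expect any serious obstacle: granting the existence of the unipotent and solvable radicals of a complex linear algebraic group, the whole argument is formal bookkeeping with preimages and quotients. The only points that require attention are the reduction to $\overline{G}$ connected needed for part (2), and the observation that the conclusions ``$U \, | \, L_{\C}$ unipotent'' and ``$R \, | \, L_{\C}$ solvable'' are assertions about these subgroups of $\GL(L_{\C})$ as abstract (matrix) groups, which hold simply because they sit inside the algebraic subgroups $R_u(\overline{G})$ and $R(\overline{G})$.
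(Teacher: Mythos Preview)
Your proposal is correct and follows essentially the same argument as the paper: pass to a finite-index subgroup so that the Zariski closure $\overline{G}$ is connected, then take $U$ and $R$ to be the $\iota$-preimages of the unipotent radical and the solvable radical of $\overline{G}$, and use the induced embeddings $G/U \hookrightarrow \overline{G}/R_u(\overline{G})$ and $G/R \hookrightarrow \overline{G}/R(\overline{G})$. You supply slightly more justification (normality, Zariski-density of the image, the remark that connectedness matters only for (2)), but the strategy is identical.
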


\begin{proof}
Let $\overline{G}$ be the Zariski-closure of $G \, | \, L_{\C} \subseteq \GL(L_{\C})$, and
$\iota$ the
composite: $G$ $\to$ $G \, | \, L_{\C} \to \overline{G}$.
Replacing $G$ by the intersection of $G$ and the $\iota$-inverse of
the identity connected component of $\overline{G}$,
we may assume that
$\overline{G}$ is connected. Let $U(\overline{G})$ (resp. $R(\overline{G})$)
be the unipotent radical (resp. the radical) of $\overline{G}$.
Let $U \le G$ (resp. $R \le G$) be the $\iota$-inverse of $U(\overline{G})$
(resp. $R(\overline{G}))$. Then the embeddings $G/U \to \overline{G}/U(\overline{G})$
and $G/R \to \overline{G}/R(\overline{G})$, and $U$ and $R$ here
meet the requirements of the lemma.
\end{proof}

\begin{lemma}\label{G|NS}
We use the notation $L_{\C}$ of $\ref{setup2.1}$. A group $G \le \Aut(X)$ has
finite restriction $G \, | \, L_{\C}$ if and only if the index
$|G : G \cap \Aut_0(X)|$ is finite.
\end{lemma}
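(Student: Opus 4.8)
The plan is to prove the two implications separately, exploiting the fact that $\Aut_0(X)$ is the connected component and that $\Aut(X)/\Aut_0(X)$ acts faithfully on cohomology (a theorem of Lieberman/Fujiki in the K\"ahler case). First, if $|G : G \cap \Aut_0(X)|$ is finite, I would argue that the image $G \,|\, L_{\C}$ is finite. The point is that $\Aut_0(X)$ acts trivially on $L$: in the K\"ahler case $\Aut_0(X)$ is connected, so it acts trivially on $H^2(X,\Z)/(\torsion)$ since the action is by a discrete group of automorphisms of the lattice; in the projective case it similarly acts trivially on $\NS(X)/(\torsion)$ because the N\'eron--Severi group is discrete and $\Aut_0(X)$ is connected. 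Hence $G_0 := G \cap \Aut_0(X)$ lies in the kernel of $G \to \GL(L_{\C})$, and $G \,|\, L_{\C}$ is a quotient of $G/G_0$, which is finite by hypothesis.

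Conversely, suppose $G \,|\, L_{\C}$ is finite; I want to conclude $|G : G_0|$ is finite. Replacing $G$ by the finite-index subgroup $G_1 := \Ker(G \to \GL(L_{\C}))$, it suffices to show that $G_1 \subseteq \Aut_0(X)$ up to finite index, i.e.\ that $G_1$ acting trivially on $L_{\C}$ forces $G_1$ into the identity component after passing to a further finite-index subgroup. Here I would invoke the structure of $\Aut(X)$: by Lieberman's theorem (in the K\"ahler case) the subgroup of $\Aut(X)$ fixing a K\"ahler class and acting trivially on $H^2(X,\Z)/(\torsion)$ has finite index in $\Aut_0(X)$ extended by a finite group; more precisely, the kernel of $\Aut(X) \to \GL(H^2(X,\Z)/\torsion)$ is an extension of a finite group by $\Aut_0(X)$. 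In the projective case the analogous statement uses that $\Aut(X) \to \GL(\NS(X)/\torsion)$ has kernel whose identity component is $\Aut_0(X)$, and the component group of that kernel is finite (this is a consequence of the fact that an automorphism acting trivially on $\NS(X)$ and preserving an ample class lies in a bounded family, hence in a finite extension of $\Aut_0(X)$). Either way, $G_1$ maps into this kernel, so $|G_1 : G_1 \cap \Aut_0(X)|$ is finite, and therefore so is $|G : G_0|$.

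The main obstacle is pinning down the precise citation and form of the statement that the kernel of the action on $L$ is a finite extension of $\Aut_0(X)$ — in the K\"ahler (non-algebraic) setting this is Lieberman's theorem together with the finiteness of the component group of the stabilizer of a K\"ahler class, while in the normal projective setting one needs the analogous fact for $\NS(X)$, which can be extracted from the boundedness of automorphisms fixing an ample class. Once that structural input is in place, both implications are short: one direction is the triviality of the connected group's action on a discrete lattice, and the other is the passage to the finite-index kernel followed by the structure theorem. I would phrase the write-up so that the K\"ahler and projective cases run in parallel, citing \cite{Fu} for the relevant facts about $\Aut_0(X)$ and its linear part.
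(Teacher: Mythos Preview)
Your proposal is correct and follows essentially the same approach as the paper: both directions rest on (i) the connected group $\Aut_0(X)$ acting trivially on the discrete lattice $L$, and (ii) Lieberman's theorem that $\Aut_{\omega}(X)/\Aut_0(X)$ is finite for an ample or K\"ahler class $\omega$ (the paper cites \cite[Proposition 2.2]{Li}), so that the kernel $K$ of $G \to G\,|\,L_{\C}$ satisfies $|K : K \cap \Aut_0(X)| < \infty$. The only difference is presentational: the paper handles both implications at once via the exact sequence $1 \to K \to G \to G\,|\,L_{\C} \to 1$ together with the identity $G \cap \Aut_0(X) = K \cap \Aut_0(X)$, whereas you treat the two directions separately.
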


\begin{proof}
Consider the exact sequence
$$1 \to K \to G \to G \, | \, L_{\C} \to 1.$$

For an ample divisor or K\"ahler class $\omega$ of $X$, our $K$ is
a subgroup of
$\Aut_{\omega}(X) := \{g \in \Aut(X) \, | \, g^* \omega = \omega\}$,
where the latter contains $\Aut_0(X)$ as a group of finite-index
(cf.\ \cite[Proposition 2.2]{Li}).
Now the last group below is a finite group
$$K / (K \cap \Aut_0(X)) \cong (K \, \Aut_0(X)) / \Aut_0(X)
\le \Aut_{\omega}(X)/\Aut_0(X) .$$
The lemma follows since the connected group $\Aut_0(X)$ acts trivially
on the lattice $L$ (and hence on $L_{\C}$)
so that $G \cap \Aut_0(X) = K \cap \Aut_0(X)$.
\end{proof}

A more precise version of \ref{solext} below was proved in \cite[6.1]{Ca}
for finitely generated groups.

\begin{lemma}\label{solext}
Let $G$ be a group of automorphisms of a compact K\"ahler manifold $X$.
Consider an exact sequence of groups:
$$1 \to N \to G \to Q \to 1 $$
Suppose
$N$ is contained in the union of finitely many connected components of $\Aut(X)$.
Suppose
both $N$ and $Q$ are virtually solvable.
Then $G$ is  also virtually solvable.
\end{lemma}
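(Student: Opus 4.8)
The plan is to reduce to the known fact that extensions of solvable by solvable are solvable, the only subtlety being the word ``virtually'' and the fact that the splitting subgroups one picks in $N$ and in $Q$ need not be compatible with the projection $G \to Q$. First I would use the hypothesis that $N$ lies in finitely many connected components of $\Aut(X)$ together with Lemma \ref{G|NS}: the restriction $N \,|\, L_{\C}$ has image with $|N : N \cap \Aut_0(X)|$ finite (the image is a subgroup of the component group hit by $N$, which is finite). Thus, after replacing $N$ by the finite-index subgroup $N_0 := N \cap \Aut_0(X)$, the group $N_0$ is contained in the connected group $\Aut_0(X)$. Since $\Aut_0(X)$ is a connected complex Lie group acting on the compact K\"ahler manifold $X$, its linear part $L(\Aut_0(X))$ is abelian and $\Aut_0(X)$ is solvable (indeed it is an extension of a complex torus by a vector group, up to finite data; cf.\ \cite[\S 5]{Fu}); hence any subgroup of $\Aut_0(X)$, in particular $N_0$, is solvable automatically. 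So in fact the hypothesis ``$N$ virtually solvable'' is redundant once we know $N$ sits in finitely many components, but we do not need this observation — we only need that $N$ has a \emph{normal} finite-index solvable subgroup.

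Next I would arrange the finite-index subgroups to be normal in $G$. Let $N_1 \le N$ be a finite-index solvable subgroup; since $N$ has only finitely many subgroups of each given finite index and $G$ permutes those of index $|N:N_1|$ by conjugation, the intersection $N' := \bigcap_{g \in G} g N_1 g^{-1}$ is a finite-index subgroup of $N$ which is normal in $G$, and it is still solvable (subgroup of $N_1$). Similarly let $Q_1 \le Q$ be a finite-index solvable subgroup; by the same averaging-over-conjugates argument applied in $Q$, we may take $Q_1 \lhd Q$, hence its preimage $G_1 := \pi^{-1}(Q_1)$ (where $\pi : G \to Q$) is a finite-index normal subgroup of $G$. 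Replacing $G$ by $G_1$ and $Q$ by $Q_1$, we have an exact sequence $1 \to N \to G \to Q \to 1$ with $Q$ solvable and with a finite-index normal subgroup $N' \lhd G$, $N'$ solvable.

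Finally I would run the standard argument: consider $N'$ as a normal subgroup of $G$. The quotient $G/N'$ fits in an exact sequence $1 \to N/N' \to G/N' \to Q \to 1$ in which $N/N'$ is finite and $Q$ is solvable, so $G/N'$ is virtually solvable — more precisely, $G/N'$ has a solvable subgroup of index $|N:N'|$ (the preimage of a finite-index solvable subgroup, or simply: a finite-by-solvable group is virtually solvable since a finite normal subgroup together with a solvable quotient gives solvability after passing to the finite-index subgroup acting trivially by conjugation on the finite kernel, cf.\ the last sentence of \ref{setup2.1}). Let $\bar{G}_2 \le G/N'$ be a finite-index solvable subgroup and $G_2$ its preimage in $G$; then $G_2$ is finite-index in $G$ and is an extension $1 \to N' \to G_2 \to \bar{G}_2 \to 1$ of a solvable group by a solvable group, hence solvable. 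Therefore $G$ is virtually solvable. The main obstacle — and the only place any geometry enters — is the very first step, namely invoking Lemma \ref{G|NS} to see that $N \cap \Aut_0(X)$ has finite index in $N$; everything afterwards is a routine manipulation of finite-index subgroups and the two-out-of-three property for solvability.
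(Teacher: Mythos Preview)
Your overall strategy is sound, but there is a genuine gap at the step where you form $N' := \bigcap_{g \in G} g N_1 g^{-1}$ and claim it has finite index in $N$. That relies on the assertion that ``$N$ has only finitely many subgroups of each given finite index,'' which holds only for finitely generated groups. Nothing in the hypotheses forces $N$ to be finitely generated --- indeed, the sentence just before the lemma notes that the finitely-generated case was already handled in \cite[6.1]{Ca}, so the whole point of this lemma is to drop that assumption. Without finite generation there may be infinitely many $G$-conjugates of $N_1$ inside $N$, and their intersection can have infinite index. (Your normal-core trick for $Q_1 \lhd Q$ is fine, since there one is intersecting $Q$-conjugates and the coset action $Q \to S_{[Q:Q_1]}$ bounds the index; for $N_1$ one is intersecting $G$-conjugates, and $G$ need not act through a finite permutation group.)

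The paper fixes exactly this point by using the Zariski closure $\widebar{N}$ of $N$ in $\Aut(X)$: the hypothesis that $N$ meets only finitely many components of $\Aut(X)$ ensures $\widebar{N}/(\widebar{N})_0$ is finite, so $M := (\widebar{N})_0 \cap N$ has finite index in $N$. Because $(\widebar{N})_0$ is a \emph{canonical} construction from the normal subgroup $N$, it is automatically normalized by $G$, hence $M \lhd G$ with no averaging needed. Since $N$ is virtually solvable so is $\widebar{N}$, whence the connected group $(\widebar{N})_0$ is honestly solvable, and so is $M$. From there your last paragraph goes through verbatim with $M$ in place of $N'$.

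Separately, your aside that $\Aut_0(X)$ is always solvable with abelian linear part is false: for $X = \BPP^n$ one has $\Aut_0(X) = \PGL_{n+1}(\C)$. You correctly say you do not use this, so it does not damage the argument, but the remark should be deleted.
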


\begin{proof}
Let $\widebar N \subseteq \Aut(X)$ be the Zariski-closure of $N$.
Replacing $G$ by a suitable finite-index subgroup, we may assume that
$Q$ is solvable.
Since $\widebar N \cap G \, \lhd \, G$, we have
$(\widebar N)_0 \cap G \, \lhd \, G$ for the
identity connected component $(\widebar N)_0$ of $\widebar N$.
Hence $M:= (\widebar N)_0 \cap N \, \lhd \, G$.
Now
$$N/M \cong (N \, (\widebar N)_0)/(\widebar N)_0
\le \widebar N/(\widebar N)_0$$
where the latter is a finite group.
We have an exact sequence
$$1 \to N/M \to G/M \to G/N = Q \to 1 .$$
Replacing $G/M$ by a finite-index subgroup
we may assume that the conjugate action of $G/M$ (and hence of $N/M$) on
the finite group $N/M$ is trivial. Thus
$N/M$ is abelian and hence $G/M$ is solvable.
Since $N$ is virtually solvable so is $\widebar N$.
Hence $(\widebar N)_0$ is solvable. Thus $M$ is solvable.
Therefore, $G$ is solvable.
\end{proof}

\begin{lemma}\label{stab}
Let $X$ be a compact complex K\"ahler manifold $($resp.\ normal projective variety$)$,
and $G \le \Aut(X)$ a subgroup.
Assume the following two conditions:
\begin{itemize}
\item[(1)]
$H \lhd G$; and $H$ has a finite-index subgroup $H_1$ such that
the null set $N(H_1)$ is a $($normal$)$ subgroup of
$H_1$ and $H_1/N(H_1) = \langle \overline{h} \rangle \cong \Z$
for some $h \in H_1$.
\item[(2)]
Suppose that there is a common nef eigenvector $L_1$ of $H_1$,
and further that $h^* L_1 = d_1(h) L_1$,
i.e., $L_1$ equals some $L_h$ up to scalar;
suppose also that for every $s \ne 0$ and every nef $M$
so that $(h^s)^*M = \lambda M$ with $\lambda \ne 1$,
we have $M$ parallel to either one of $L_{h^{\pm1}}$ $($which are two fixed nef eigenvectors$)$.
\end{itemize}
Then the stabilizer subgroup
$\Stab_{L_h}(G) := \{g \in G \, | \, g^*L_h \,\,
\text{is parallel to} \,\, L_h\}$
has index $\le 2$ in $G$.
\end{lemma}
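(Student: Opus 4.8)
The plan is to show that $G$ permutes the finite set of "directions" $\{[L_h], [L_{h^{-1}}]\}$, and that $\Stab_{L_h}(G)$ is the kernel of the resulting homomorphism to the symmetric group $\mathfrak{S}_2$. First I would fix the two nef classes $L_h$ and $L_{h^{-1}}$ as in hypothesis (2), with $h^* L_h = d_1(h) L_h$ and $(h^{-1})^* L_{h^{-1}} = d_1(h^{-1}) L_{h^{-1}}$; note $d_1(h) = d_1(h^{-1}) > 1$ since $H_1/N(H_1) \cong \Z$ is infinite, so $h$ has positive entropy. The key point is that for any $g \in G$, the class $g^* L_h$ should again be (parallel to) one of these two distinguished eigenvectors. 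To see this, observe that since $H \lhd G$, for $g \in G$ the automorphism $h' := g h g^{-1} \in H$, and $(h')^* = (g^{-1})^* h^* g^*$; hence $g^* L_h$ is a nef class satisfying $(h')^*(g^* L_h) = d_1(h)(g^* L_h)$. Thus $g^* L_h$ is a nef eigenvector of $h'$ with eigenvalue $d_1(h) = d_1(h')\neq 1$.

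The next step is to transport this back to an eigenvector statement for $h$ itself. Replacing $H_1$ by a further finite-index subgroup (which does not affect the conclusion, since changing $H_1$ only shrinks $N(H_1)$ within $H_1$ and keeps $h$ up to a power), we may assume $g h g^{-1} \in H_1$ as well for the relevant $g$'s, or more robustly argue as follows: the element $h'' := h' \cdot h^{-1}$ lies in $H$ and, modulo $N(H_1)$, the images of $h'$ and $h$ in $H_1/N(H_1) \cong \Z$ must be compared. Since conjugation by $g$ normalizes $H$ and preserves entropy, $h'$ and $h$ generate commensurable cyclic groups modulo null entropy, so some power $(h')^a$ equals $h^b$ modulo $N(H_1)$ with $a, b \neq 0$; comparing first dynamical degrees forces $a = b$ (both equal to the common value since $d_1((h')^a) = d_1(h)^a$ and $d_1(h^b) = d_1(h)^b$). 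Then $g^* L_h$ is a nef class with $(h^b)^*(g^* L_h) = d_1(h)^b (g^* L_h)$ and $d_1(h)^b \neq 1$, so by hypothesis (2) applied with $s = b$, the class $M := g^* L_h$ is parallel to $L_h$ or to $L_{h^{-1}}$.

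With that in hand, the finish is short: define $\phi : G \to \mathfrak{S}(\{[L_h],[L_{h^{-1}}]\}) \cong \Z/2$ by letting $\phi(g)$ record whether $g^*$ fixes $[L_h]$ or swaps it with $[L_{h^{-1}}]$; one checks this is well-defined (both $[L_h]$ and $[L_{h^{-1}}]$ land in the two-element set, and $g^*$ is invertible, hence permutes them) and a homomorphism (since $(g_1 g_2)^* = g_2^* g_1^*$, composing permutations). Then $\Stab_{L_h}(G) = \Ker \phi$ has index $\le |\mathfrak{S}_2| = 2$ in $G$.

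The main obstacle I expect is the middle step: ensuring that $g^* L_h$ is genuinely an eigenvector of a \emph{power of $h$} (not merely of the conjugate $h' = ghg^{-1}$), so that hypothesis (2) — which is stated only for $h^s$ — can be applied. This requires knowing that $h'$ and $h$, both of positive entropy inside $H$, have commensurable images in $H_1/N(H_1) \cong \Z$; this should follow because $N(H_1)$ is intrinsic to $H$ (the null subset, which here is a subgroup) and conjugation by $g$ preserves entropy, hence preserves $N(H_1)$ and induces an automorphism of $H_1/N(H_1) \cong \Z$, which is multiplication by $\pm 1$. Granting that, $h' \equiv h^{\pm 1} \pmod{N(H_1)}$ up to the finite-index issue, and the eigenvalue comparison via $d_1$ closes the gap. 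One should also double-check that passing to a finite-index subgroup of $G$ to arrange $h' \in H_1$ is harmless — but in fact it is not needed for the \emph{statement}, only possibly as a bookkeeping device in the proof, and the index-$\le 2$ conclusion is about $G$ itself, so care is warranted there; the cleanest route is the $\pm 1$ argument above, which avoids shrinking $G$.
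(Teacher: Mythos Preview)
Your overall strategy---show that $g^*L_h$ is parallel to one of $L_{h^{\pm 1}}$ for every $g\in G$, then conclude index $\le 2$---is exactly the paper's. But the middle step, which you correctly flag as the obstacle, has a real gap. From $(h')^*(g^*L_h)=d_1(h)\,g^*L_h$ (this requires $h'=g^{-1}hg$, not $ghg^{-1}$, by contravariance of pullback) and a relation $(h')^a=h^b n$ with $n\in N(H_1)$, you want to deduce $(h^b)^*(g^*L_h)=\lambda\,g^*L_h$. Unwinding gives $n^*(h^b)^*(g^*L_h)=d_1(h)^a\,g^*L_h$, and you cannot remove $n^*$: the hypothesis tells you $n^*$ fixes the common eigenvector $L_1=L_h$, but says nothing about how $n^*$ acts on $g^*L_h$. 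Your proposed workaround via ``conjugation by $g$ induces $\pm 1$ on $H_1/N(H_1)$'' does not help, since even granting $h'=h^{\pm 1}n$ you still face the same $n^*(g^*L_h)$ term. (Also, $gH_1g^{-1}$ need not equal $H_1$, so that automorphism statement needs care; and the claim $a=b$ is neither justified nor needed.)

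The paper's proof fixes this by reversing the order of the computation: it evaluates $(gh^ag^{-1})^*L_h$ \emph{on $L_h$ itself}, where $n^*$ does act trivially (since $L_1=L_h$ is a common $H_1$-eigenvector and $n$ has null entropy). Writing $gh^ag^{-1}=h^bn$ with $a=|H:H_1|$ and $b\ne 0$, one gets $(gh^ag^{-1})^*L_h=n^*(h^b)^*L_h=d_1(h)^b L_h$; now apply $g^*$ to both sides of $(g^{-1})^*(h^a)^*g^*L_h=d_1(h)^bL_h$ to obtain $(h^a)^*(g^*L_h)=d_1(h)^b\,g^*L_h$, and hypothesis~(2) with $s=a$ finishes the claim. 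The coset argument at the end (if $g_1,g\notin\Stab_{L_h}(G)$ then $gg_1^{-1}\in\Stab_{L_h}(G)$) is then immediate and slightly cleaner than setting up a homomorphism to $\mathfrak S_2$, since it uses only the claim for $L_h$ and not the symmetric claim for $L_{h^{-1}}$.
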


\begin{proof}
We begin with:
\begin{claim}\label{parallel}
For every $g \in G$, the class $g^* L_h$ is parallel to one of $L_{h^{\pm1}}$.
\end{claim}

We prove the claim.
Take any element $g$ of $G$. Since $H \lhd G$, we have $g h g^{-1} \in H \setminus N(H)$. Hence
$g h^a g^{-1}$ is in $H_1$ with $a = |H:H_1|$,
so it equals $h^b n$ for some $b \ne 0$ and $n \in N(H_1)$.
Since the element $n$ of $N(H_1)$ fixes $L_1 = L_h$ (cf.~\ref{setup2.1}),
$$(g h^a g^{-1})^* L_h = d_1(h)^b L_h, \,\,\,\, (h^a)^*(g^* L_h) = d_1(h)^b (g^* L_h) .$$
Now the condition (2) applied to $h^a$ and $\lambda := d_1(h)^b \ne 1$,
implies the claim.

Return to the proof of Lemma \ref{stab}.
Suppose there is some $g_1 \in G \setminus \Stab_G(L_h)$.
Take any $g \in G \setminus \Stab_G(L_h)$. By Claim \ref{parallel}, we have
(with equalities all up to scalars)
$g^* L_h = L_{h^{-1}} = g_1^* L_h$, $(g g_1^{-1})^* L_h = (g_1^{-1})^* g^* L_h = L_h$.
Hence $g g_1^{-1} \in \Stab_G(L_h)$ and $g = (g g_1^{-1}) g_1 \in \Stab_G(L_h) \, g_1$.
So $G = \Stab_G(L_h) \, \cup \, \Stab_G(L_h) \, g_1$.
The lemma follows.
\end{proof}

\begin{setup}
{\bf Proof of Theorem \ref{ThB}}
\end{setup}

Assertion (2) follows from Assertion (1),
so we only need to prove Theorem \ref{ThB}(1).

We follow the proof of Oguiso \cite[Prop 2.2]{Og07}. Since
$G$ is of null entropy, the subset
$$U := \{g \in G \, ; \, g \, | \, L_{\C} \,\,\, \text{is unipotent}\}$$
is a normal subgroup of $G$.
If $G \, | \, L_{\C}$
is not virtually solvable, then by the
classical Tits alternative theorem \cite[Theorem 1]{Ti},
there are $g_i \in G$ such that $\langle g_1, g_2 \rangle | L_{\C}
= (\langle g_1 \rangle | L_{\C}) * (\langle g_2 \rangle | L_{\C}) = \Z * \Z$.
As observed in \cite{Og07}, $g_i^s \in U$ for some $s \ge 1$, and hence
$\Z * \Z = \langle g_1^s, g_2^s \rangle \, | \, L_{\C} \le U \, | \, L_{\C}$
which is unipotent (and hence solvable). This is absurd.

Thus, $G \, | \, L_{\C}$ is virtually solvable.
Replacing $G$ by a suitable finite-index subgroup, we may assume that
$G \, | \, L_{\C}$ is solvable and its closure $\widebar G$ in
$\GL(L_{\C})$ is connected (and solvable).
Write $\widebar G = \widebar U \rtimes \widebar T$
where $\widebar U$ is the unipotent radical and
$\widebar T$ a maximal torus in $\widebar G$.
As observed in \cite{Og07}, the image of $G$ via the quotient map
$\widebar G\to \widebar T$ is a torsion group in $\GL(L_{\C})$
with bounded exponent
and hence a finite group by Burnside's theorem.
Thus the index $|G:U| < \infty$.

To finish the proof of Assertion (1),
we may assume that $G = U$ and it suffices to show that
$G \, | \, L$ is generated by $\ell (\ell -1)/2$ elements
where $\ell = \rank L$. Regarding $\widebar G$ as a subgroup of upper triangular
matrices, there is a standard
normal series
$$1 \lhd U_{1} \lhd U_2 \lhd \cdots \lhd U_{\ell (\ell -1)/2} = \widebar G$$
such that the factor groups are all $1$-dimensional.
Restricting the series to $G \, | \, L$, we get a normal series of discrete
groups whose factor groups are cyclic groups.
Thus $G \, | \, L$ is generated by $\ell (\ell -1)/2$ elements.
This proves Theorem \ref{ThB}.

\begin{setup}
{\bf Proof of Theorem \ref{ThC}}
\end{setup}

(1) was proved in \cite[Theorem 1.2]{Z-Tits}.
For (2), suppose that $G$ is polarized by a quasi-nef sequence
$L_1 \cdots L_k$ ($1 \le k < n$) so that
$g^*(L_1 \cdots L_k) = \chi_1(g) \cdots \chi_k(g) L_1 \cdots L_k$.
As in the proof of \cite[Theorem 1.2]{Z-Tits}, the homomorphism
$$
\varphi : G \to (\R, +), \,\,\,\,
g \mapsto (\log \chi_1(g), \dots, \log \chi_{n-1}(g))
$$
has $\Ker(G) = N(G)$, and $\varphi(G) = \Z^{\oplus r}$ a lattice
in $\R^{n-1}$.
By Theorem \ref{ThB}, $N(G) \, | \, L_{\C}$ is virtually solvable,
so is $G \, | \, L_{\C}$, since $G/N(G)$ is abelian and by Lemma \ref{solext}.

For (3), the ``if part" follows from Theorem \ref{ThB} and Lemma \ref{solext}.
The ``only if" part is by \cite[Theorem 1.2, Remark 1.3]{Z-Tits}.
This proves Theorem \ref{ThC}.

\begin{setup}
{\bf Proof of Theorem \ref{ThA}}
\end{setup}

We may assume that Assertion (1) is not satisfied.
Replacing $G$ by a suitable finite-index subgroup and by \cite[Thm 1]{Ti},
we may assume that $G | L_{\C}$ is
solvable and its closure $\widebar G$ in $\GL(L_{\C})$
is connected (and solvable).
Let $K = \Ker(G \to G | L_{\C})$
be as in Lemma \ref{G|NS}.

Suppose that $K$ is virtually solvable. Then so is $G$
by Lemma \ref{solext}. Thus Theorem \ref{ThA}(3) occurs,
by \cite[Theorem 1.2, Remark 1.3]{Z-Tits}.

Suppose that $K$ is not virtually solvable.
Consider the exact sequence
$$1 \to L_A \to \Aut_0(X) \to T \to 1$$
where $L_A$ is the linear part of $\Aut_0(X)$ and $T$ a compact complex torus
(cf.\ \cite[Theorem 3.12]{Li}).
This induces the exact sequence (with $Q$ abelian):
$$1 \to K \cap L_A \to K \cap \Aut_0(X) \to Q \to 1 .$$

We may assume that Theorem \ref{ThA}(2) does not occur.
So $K \cap L_A$ is virtually solvable by Tits alternative.
Thus so is $K \cap \Aut_0(X)$ by the exact sequence
above and Lemma \ref{solext}.
Now
$K / (K \cap \Aut_0(X))$ is a finite group by Lemma \ref{G|NS}.
Hence $K$ is also virtually solvable, contradicting our extra assumption.

For the final assertion, replacing $G$ by a suitable finite-index subgroup,
we may assume that
$G \, | \, L_{\C}$ is solvable and has connected Zariski-closure
in $\GL(L_{\C})$ .
Then $G/N(G) \cong \Z^{\oplus r}$ by \cite[Theorem 1.2]{Z-Tits}. This
and Theorem \ref{ThB} for $N(G)$ imply the assertion.

\section{Strong primitivity for threefolds}\label{sp}

{\it We prove Theorem} \ref{ThF}.
Replacing $G_0$ by the identity connected component of its
Zariski-closure in $\Aut_0(X)$
we may further assume that $G_0 = G \cap \Aut_0(X)$ is
connected, positive-dimensional and closed in $\Aut_0(X)$.
Because $G$ acts naturally on the quotient of $X$ modulo $G_0$ and because of
our assumption, we may assume that one orbit of $G_0$ is a
Zariski-dense open subset of $X$,
i.e., $X$ is almost homogeneous
(cf. \cite[Lemma 2.14]{Z-Tits}).

\begin{claim}\label{c0ThF}
Suppose the irregularity $q(X) = h^1(X, \OO_X) > 0$. Then Theorem $\ref{ThF}$ is true.
\end{claim}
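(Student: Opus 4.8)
The plan is to use the Albanese map. Since $X$ is $\Q$-factorial terminal it is klt, hence has rational singularities, so the Albanese morphism $\alb_X : X \to A := \Alb(X)$ is a well-defined morphism, it is equivariant for the natural affine action of $\Aut(X)$ on $A$, and it is non-constant because $q(X)>0$. First I would use strong primitivity to show $\alb_X$ is generically finite onto its image. Put $Y_0 := \alb_X(X)$, a $G$-stable subvariety of $A$, and take the Stein factorization $X \xrightarrow{f} Y \to Y_0$ of $\alb_X$; by functoriality $G$ acts biholomorphically on $Y$ and $f$ is $G$-equivariant, with $1 \le \dim Y = \dim Y_0$. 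If $\dim Y$ were $1$ or $2$, then $f$ (taking $X'=X$, $G_1=G$) would exhibit $(X,G)$ as non-strongly-primitive, contrary to hypothesis; hence $\dim Y_0 = 3$, i.e.\ $\alb_X$ is generically finite onto $Y_0$.

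Next I would bring in the almost homogeneity. Being connected, $G_0$ acts on $A$ by translations, via a homomorphism of algebraic groups $\varphi : \Aut_0(X) \to A$ such that $\alb_X\circ g$ equals $\alb_X$ followed by translation by $\varphi(g)$. Writing the dense open orbit as $U = G_0\cdot x_0 \cong G_0/H$ with $H := \Stab_{G_0}(x_0)$, one gets $\alb_X(U) = \varphi(G_0) + \alb_X(x_0)$; since $\varphi(G_0)$ is the image of the connected algebraic group $G_0$, it is a subtorus $B\subseteq A$, so $Y_0 = B + \alb_X(x_0)$ and therefore $\dim B = 3$. Now $H \subseteq N := \ker(\varphi|_{G_0})$, and every element of $N$ sends each fiber of $\alb_X$ to itself; since a general fiber of $\alb_X$ is finite and $N^0$ is connected, the orbit maps $n\mapsto n\cdot x$ show that $N^0$ fixes pointwise the dense open locus of $X$ over which $\alb_X$ is finite, hence acts trivially on $X$, so $N^0=\{e\}$. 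Thus $N$, and a fortiori $H$, is finite, and $\varphi|_{G_0}:G_0\to B$ is an isogeny, so $G_0$ is itself an abelian $3$-fold.

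Finally, since $G_0$ is abelian, $H \lhd G_0$, so $U\cong G_0/H$ is an abelian $3$-fold, in particular a complete variety. An open, dense, complete (hence closed) subvariety of the irreducible $X$ must be all of $X$, so $X = U$ is a complex $3$-torus. Then $\Aut_0(X)$ is its translation group, and $G_0 \subseteq \Aut_0(X)$, acting transitively on $X=U$, must equal $\Aut_0(X)$; in particular $G_0$ is Zariski-dense in $\Aut_0(X)$. This is the conclusion of Theorem \ref{ThF}, so the claim follows.

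I expect the main obstacle to be the step above where generic-finiteness of $\alb_X$ (which is exactly where strong primitivity is spent) is converted into the finiteness of $\ker(\varphi|_{G_0})$; once that is in hand the rest collapses quickly. One should also be careful that the Albanese formalism and its $\Aut(X)$-equivariance are valid for a normal $X$ with only rational singularities rather than a smooth one, and that strong primitivity is inherited after the reductions already made (replacing $G_0$ by the identity component of its Zariski closure, which may enlarge $G$).
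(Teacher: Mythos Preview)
Your argument is correct, but it takes a different route from the paper's. The paper's proof of Claim~\ref{c0ThF} is shorter because it imports a black box: by \cite[Lemma~2.13]{Z-Tits}, almost homogeneity alone forces the Albanese map $a: X \to A$ to be \emph{surjective and birational}; then $G_0|A$ has a dense orbit in $A$, so $G_0|A$ equals the full translation group $\Aut_0(A)$, and the (necessarily $G_0$-stable) non-isomorphism locus of $a$ must be empty by transitivity. In particular the paper does not spend the strong primitivity hypothesis at this stage at all.

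Your approach instead uses strong primitivity (via Stein factorization) to force the Albanese to be generically finite, then extracts the torus structure of $G_0$ from the isogeny $\varphi|_{G_0}: G_0 \to B$, and finally concludes by observing that the open orbit $U \cong G_0/H$ is complete and hence equals $X$. This is more self-contained --- you do not need the external lemma --- at the cost of using strong primitivity earlier. Your worries at the end are not serious: the Albanese morphism and its $\Aut(X)$-equivariance are standard for normal varieties with rational singularities; and if $G$ is enlarged to $G' := G \cdot (\overline{G_0})_0$, any finite-index subgroup $G_1' \le G'$ has $G_1' \cap G$ of finite index in $G$, so strong primitivity of $(X,G)$ passes to $(X,G')$.
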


We prove Claim \ref{c0ThF}.
By the proof of \cite[Lemma 2.13]{Z-Tits}, the albanese map
$a: X \to A:= \Alb(X)$ is surjective, birational and
necessarily $\Aut(X)$-equivariant. Our $G_0$ induces an action on $A$
and we denote it by $G_0 | A$.
Since $G_0 | A$ also has a Zariski-dense open orbit in $A$,
we have $G_0 | A = \Aut_0(X)$ ($\cong A$).
Let $B \subset A$ be the locus over which $a$ is not an isomorphism.
Note that $B$ and $a^{-1}(B)$ are $G_0$-stable.
Since $G_0 | A = \Aut_0(X)$, we have $B = \emptyset$.
Claim \ref{c0ThF} is proved.

\par \vskip 1pc
We continue the proof of Theorem \ref{ThF}.
By Claim \ref{c0ThF}, we may assume that $q(X) = 0$.
Thus $G_0 \le \Aut_0(X)$ is a linear algebraic group
and has a Zariski-dense open orbit in $X$.
In particular, $X$ is ruled and unirational,
because linear algebraic groups are rational
varieties by a classical result of Chevalley.

In the rest of the proof, we shall derive a contradiction.
Let $U \subseteq X$ be the open dense $G_0$-orbit
and $F := X \setminus U$. Then $F$ consists of finitely many prime divisors
and some subvarieties of codimension $\ge 2$.
Since $G_0 \lhd G$, we may assume that both $U$ and
all irreducible components of $F$
are $G$-stable, after replacing $G$ by a suitable finite-index subgroup.
$X$ has only finitely many $G_0$-periodic prime divisors,
all of which are contained in $F$ and $G$-stable.

By the minimal model program (MMP) in dimension three (cf. \cite[\S 3.31, \S 3.46]{KM}),
the end product of a uniruled variety (like our $X$ here) is
an extremal Fano contraction $f: X_m \to Y$ with a general fibre $X_{m,y}$, i.e., by definition,
the restriction $-K_{X_m} \, | \, X_{m, y}$ of the canonical divisor $-K_{X_m}$
is ample and the Picard numbers satisfy $\rho(X_m) = 1 + \rho(Y)$.

\begin{claim}\label{c1ThF}
\begin{itemize}
\item[(1)]
Every $G_0$-periodic subvariety of $X$ is actually $G_0$-stable.
\item[(2)]
There are a composite $X = X_0 \dasharrow X_1 \cdots \dasharrow X_m$
of birational extremal contractions and an extremal Fano contraction
$X_m \to Y$  with $\dim Y < \dim X$.
The induced birational action of $G_0$ on each $X_i$
is biregular. $G_0 | X_m$ descends to an action on $Y$ so that
$X_m \to Y$ is $G_0$-equivariant.
\item[(3)]
In $(2)$, for every finite-index subgroup $G_1$ of $G$,
there is at least one $i \in \{1, \dots, m\}$
such that the induced action of $G_1$ on $X_i$ is not biregular.
\item[(4)]
In $(2)$, let $s \le m$ be the largest integer such that
$X_i \to X_{i+1}$ is divisorial for
every $i \in \{0, 1, \dots, s-1\}$.
Then, replacing $G$ by a suitable finite-index subgroup,
the induced birational action of $G$ on each $X_i$ $(i < s)$ is biregular and hence
each map $X_{i-1} \to X_{i}$ is $G$-equivariant.
In particular, $s < m$.
\end{itemize}
\end{claim}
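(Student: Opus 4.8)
The plan is to run a $G_0$-equivariant minimal model program on $X$ and then measure how far the equivariance can be promoted from $G_0$ to the whole group $G$; the point at which that promotion breaks down is what will force the presence of a flip, and eventually the contradiction announced just before Claim~\ref{c1ThF}. Part~(1) is immediate: $G_0$ is a connected algebraic group, so it has no proper Zariski-closed subgroup of finite index, while the stabilizer in $G_0$ of a closed subvariety of $X$ is Zariski-closed; hence if $Z\subseteq X$ is $G_0$-periodic its $G_0$-stabilizer, being closed of finite index, equals $G_0$, i.e.\ $Z$ is $G_0$-stable.

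For (2) I would argue by induction on $i$. Since $X$, and hence every birational model appearing below it, is uniruled, the canonical class is never nef, so the three-dimensional MMP (which terminates, cf.\ \cite[\S 3.31, \S 3.46]{KM}) must end in an extremal Fano contraction $X_m\to Y$ with $\dim Y<3$, reached through divisorial contractions and flips $X=X_0\dasharrow X_1\dasharrow\cdots\dasharrow X_m$. If $G_0\,|\,X_i$ is biregular, then the connected group $G_0$ acts trivially on $\NS(X_i)$, hence fixes $K_{X_i}$ and an ample class, hence fixes each of the finitely many $K_{X_i}$-negative extremal rays of bounded length (cone theorem); in particular it fixes the ray $R_i$ defining the next step, so the associated extremal contraction is $G_0$-equivariant, and $X_{i+1}$ — its target if $R_i$ is divisorial, or the canonical (because unique) flip if $R_i$ is small — inherits a biregular $G_0$-action. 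The final contraction $X_m\to Y$ is $G_0$-equivariant for the same reason, so $G_0\,|\,X_m$ descends to $Y$.

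For (3) I would first check $m\ge 1$: if $m=0$ then $X=X_m\to Y$ is already a Fano contraction; when $\dim Y>0$ its ray is $G$-periodic — this is where the almost abelian hypothesis on $G/G_0$ enters, via \cite[Theorem 2.13]{uniruled} — so a finite-index subgroup of $G$ makes $X\to Y$ equivariant, contradicting strong primitivity, while when $\dim Y=0$ one has $\rho(X)=1$, forcing $G\,|\,\NS_{\C}(X)$ to be trivial and so, by Lemma~\ref{G|NS}, $G/G_0$ finite, against the positive-rank hypothesis. For the main assertion, suppose some finite-index $G_1\le G$ acted biregularly on all of $X_1,\dots,X_m$ (it already does on $X_0=X$). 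Passing to a further finite-index subgroup making the finitely many rays $R_0,\dots,R_m$ stable — each $R_i$ is $G$-periodic on $X_i$ by \cite[Theorem 2.13]{uniruled}, and automatically so via Lemma~\ref{G|NS} in the degenerate case where $G/(G\cap\Aut_0(X_i))$ happens to be finite — every MMP step and $X_m\to Y$ would become $G_1$-equivariant. If $\dim Y>0$ this contradicts strong primitivity; if $\dim Y=0$ then $X_m$ is a $\Q$-factorial terminal Fano threefold of Picard number one, almost homogeneous under $G_0$ with $q(X_m)=0$, and one shows (invoking the classification of such threefolds, whose automorphism groups are connected) that $G_1/G_0$, hence $G/G_0$, is finite — again a contradiction. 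This establishes (3).

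For (4) I would repeat the induction of (2), this time tracking $G$ rather than $G_0$. The group $G$ acts biregularly on $X_0=X$, and for $i\le s$ the step $X_{i-1}\to X_i$ is a divisorial contraction of a $G$-periodic extremal ray, so after finitely many passages to finite-index subgroups each $X_{i-1}\to X_i$ ($i\le s$) becomes $G$-equivariant and $G$ acts biregularly on every $X_i$ with $i\le s$, which gives the assertion for $i<s$. If one had $s=m$, the last step $X_{m-1}\to X_m$ would still be divisorial, so by the same reasoning $G$ would act biregularly on $X_m$ and, after one more reduction, $X_m\to Y$ would be $G$-equivariant; that is precisely the configuration just ruled out in (3), whence $s<m$. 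I expect the main difficulty to lie in (3)--(4): one must verify that the package ``$\Q$-factorial terminal, with $G$-quotient almost abelian of positive rank'' is inherited by each intermediate model $X_i$, so that \cite[Theorem 2.13]{uniruled} keeps guaranteeing the periodicity of the extremal rays (and hence the transport of equivariance along divisorial steps remains legitimate), and one must dispose cleanly of the Picard-number-one Fano case $\dim Y=0$, where strong primitivity is vacuous and a genuinely different argument is needed.
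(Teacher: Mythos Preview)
Your overall strategy matches the paper's closely: parts (1), (2), and (4) are argued essentially as the paper does, via connectedness of $G_0$, the $G_0$-equivariant MMP, and the periodicity of extremal rays under the almost-abelian quotient hypothesis (the paper cites \cite[Lemma~3.7]{uniruled} for the divisorial steps and \cite[Theorem~2.13]{uniruled} for the Fano contraction, exactly as you anticipate).

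The one substantive divergence is the $\dim Y=0$ subcase of (3). You propose to invoke a classification of almost-homogeneous $\Q$-factorial terminal Fano threefolds of Picard number one together with the claim that their automorphism groups are connected. This is both heavier than needed and not clearly justified: no such classification is standard in the singular setting, and connectedness of $\Aut$ is not a general feature even of smooth almost-homogeneous Fanos. The paper's treatment is a one-liner: since $\rho(X_m)=1$ and $-K_{X_m}$ is ample, $G$ automatically fixes the ample class $[-K_{X_m}]$, and by Lieberman \cite[Proposition~2.2]{Li} (equivalently Lemma~\ref{G|NS}) any group fixing an ample class is a finite extension of $\Aut_0$, so $|G:G_0|<\infty$, contradicting the positive-rank hypothesis on $G/G_0$. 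No classification enters. You correctly flagged this case as the crux and said a ``genuinely different argument'' is needed; the missing observation is simply that Picard number one hands you a $G$-fixed ample class for free, after which Lieberman finishes immediately.
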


\begin{remark}
By the choice of $s$ in
(4), $X_s \dashrightarrow X_{s+1}$ is a flip
with a flipping contraction $X_s \to Y_s$
and with $X_{s+1} = \Proj_{Y_s}(\oplus_{m \ge 0} \OO_{Y_s}(mK_{Y_s}))$
(cf.~\cite[Cor 6.4 or Thm 3.52]{KM}).
\end{remark}

We now prove Claim \ref{c1ThF}.
(1) is true because $G_0$ is a connected group.
For the first part of (2), see \cite[\S 3.31, \S 3.46]{KM} when $\dim X = 3$ and
\cite[Corollary 1.3.2]{BCHM} when $\dim X$ is arbitrary.
The second part of (2) is true because
$G_0$ acts trivially on $H^i(X, \Z)$,
and also on $\NS_{\C}(X)$ and the extremal rays of $\NE(X)$
(cf.\ \cite[Lemmas 2.12 and 3.6]{uniruled}).

For (4),
suppose that $X = X_0 \to X_1$ is a divisorial contraction
of an extremal ray $R := \R_{> 0} [\ell]$ with
an
exceptional divisor $D_0$.
Since $G_0$ acts trivially on the extremal rays of $\NE(X)$,
this $D_0$ is $G_0$-stable. So $D_0$ is contained in $F$
and is $G$-stable.

Since
the natural map $G/G_0 \to \Aut(H^2(X, \Z))$ has finite kernel
(cf. \cite[Proposition 2.2]{Li}) and
by the assumption, $G/G_0$ is almost abelian of finite rank $r > 0$.
By Lemma \ref{modfin} and
replacing $G$ by a finite-index subgroup,
there is some $H_0 \lhd G$ such that $H_0$ contains
$G_0$ as a subgroup of finite index and
$G/H_0 = \langle \bar{g}_1 \rangle \oplus \cdots \oplus \langle \bar{g}_r \rangle
\cong \Z^{\oplus r}$ for some $g_i \in G$.

In \cite[Lemma 3.7]{uniruled}, it is proved that a positive power
of $g_i$ preserves the extremal ray $R$ and hence descends to a
biregular automorphism of $X_1$.
Thus $X \to X_1$ is $G$-equivariant
after $G$ is replaced by a finite-index subgroup.
Indeed, we may assume that $g_i(R) = R$ so that
$\{g(R) \, | \, g \in G\}$ consists of no more than $|H_0 : G_0|$
extremal rays so that a finite-index subgroup of $G$ fixes $R$.
{\it This and the second sentence of the next paragraph are the places
where we need $G/G_0$ to be almost abelian.}

For (3), suppose the contrary that $G$ (replaced by a finite-index subgroup)
acts biregularly on all $X_i$.
Then, as in the proof of (4) above,
by \cite[Theorem 2.13, or Appendix]{uniruled},
we may assume that
$G$ (replaced by its finite-index subgroup) fixes the extremal ray
giving rise to the extremal Fano contraction $X_m \to Y$, and hence
$X_m \to Y$ is $G$-equivariant.
By the strong primitivity assumption, we have $\dim Y = 0$, so the Picard number
$\rho(X_m) = 1$ and $-K_{X_m}$ is ample.
Since $G$ fixes the ample class of $-K_{X_m}$,
it is a finite extension of $G_0$ (cf.\ \cite[Proposition 2.2]{Li}).
This contradicts the assumption.
Claim \ref{c1ThF} is proved.

\begin{claim}\label{c2ThF}
It is impossible that $\NS_{\C}(X_i)$ with $0 \le i \le m$ is spanned by $-K_{X_i}$ and
$G_0$-periodic divisors, or that $\NS_{\C}(Y)$ is spanned by
$G_0$-periodic divisors.
\end{claim}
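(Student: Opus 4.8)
The plan is to argue by contradiction, treating the two alternatives (that $\NS_{\C}(X_i)$ is spanned by $-K_{X_i}$ together with $G_0$-periodic divisors, or that $\NS_{\C}(Y)$ is spanned by $G_0$-periodic divisors) on the same footing. First I would upgrade ``$G_0$-periodic'' to ``$G_0$-stable'' using Claim \ref{c1ThF}(1): since $G_0$ is connected, any $G_0$-periodic subvariety is already $G_0$-stable, so $G_0$ fixes the classes of all the relevant divisors in $\NS_{\C}$. On $X_i$ the class $-K_{X_i}$ is of course also $G_0$-fixed (indeed $\Aut_0$ fixes the canonical class), and on $Y$ there is nothing extra to add; hence in either scenario $G_0$ acts trivially on the whole of $\NS_{\C}(X_i)$, respectively $\NS_{\C}(Y)$.

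Next I would pass from the $\NS$-action to the group structure. If $G$ is replaced by a suitable finite-index subgroup and $G_1$ denotes the subgroup acting on $X_i$ (resp.\ $Y$) — recall from Claim \ref{c1ThF}(2),(4) that after shrinking $G$ the birational action of $G$ on the early $X_i$ and on $Y$ is biregular — then the kernel of $G_1 \to \GL(\NS_{\C}(X_i))$ contains the subgroup acting trivially, and combining with Lemma \ref{G|NS} I would conclude that $G_1$ acting trivially on $\NS_{\C}$ forces $|G_1 : G_1 \cap \Aut_0| < \infty$. But $G_1$ contains a finite-index copy of $G$, which by hypothesis has $G/G_0$ almost abelian of \emph{positive} rank $r > 0$, so $|G : G_0| = \infty$; on the other hand $G_0$ maps into $\Aut_0$ of the relevant variety (for $X_i$ this is the induced biregular action of the connected group $G_0$; for $Y$ it is the descended action, which lands in $\Aut_0(Y)$ since $G_0$ is connected). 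This yields $|G_1 : G_1 \cap \Aut_0| = \infty$, a contradiction.

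The one point that needs care — and which I expect to be the main obstacle — is making sure the reduction steps are legitimate on \emph{every} $X_i$ in the range $0 \le i \le m$, not just on the divisorial-contraction part $i < s$ where Claim \ref{c1ThF}(4) already gives a biregular $G$-action: past a flip the full group $G$ need not act biregularly on $X_i$, so one must work with the largest finite-index subgroup $G_1 \le G$ that does act biregularly on the particular $X_i$ under consideration, verify that $G_1$ still contains $G_0$ (hence is still of infinite index over $G_0 = G_1 \cap \Aut_0(X_i)$, by Lemma \ref{G|NS} this is where positivity of the rank is used), and only then run the $\NS$-triviality argument. For the statement about $Y$, the analogous subtlety is that $G$ need not descend to $Y$ a priori, but by Claim \ref{c1ThF}(2) the connected group $G_0$ does descend and acts on $Y$ with a dense open orbit, so $G_0 | Y$ is a positive-dimensional subgroup of $\Aut_0(Y)$; triviality of $G_0$ on $\NS_{\C}(Y)$ is then simply false unless $\dim Y = 0$, in which case $\NS_{\C}(Y) = 0$ is not ``spanned by $G_0$-periodic divisors'' in any nonvacuous sense and the extremal Fano contraction collapses $X_m$ to a point — but that case was already excluded in the proof of Claim \ref{c1ThF}(3) via strong primitivity. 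So the proof reduces to bookkeeping of finite-index subgroups together with the single structural input that a connected positive-dimensional automorphism group cannot act trivially on the Néron–Severi space of the variety it moves.
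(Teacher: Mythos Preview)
Your plan has a genuine gap, precisely at the point you flag as the main obstacle. For $i > s$ and for $Y$, there is in general \emph{no} finite-index subgroup of $G$ acting biregularly there --- indeed Claim \ref{c1ThF}(3) asserts that for every finite-index $G_1 \le G$ some $X_i$ carries no biregular $G_1$-action, and the whole proof of Theorem \ref{ThF} is organised around this obstruction. So ``the largest finite-index subgroup $G_1 \le G$ that does act biregularly on $X_i$'' may well have infinite index in $G$ (it could be just $G_0$), and then Lemma \ref{G|NS} tells you nothing. There is a second, more basic gap: you only establish that $G_0$ acts trivially on $\NS_{\C}$, but that is automatic for any connected group and does not use the spanning hypothesis at all; what you need is that $G$ itself (up to finite index) fixes the spanning classes, and your write-up never supplies the bridge from ``$G_0$-stable divisor'' to ``$G$-stable divisor.'' (Your last paragraph on $Y$ inverts this: a connected $G_0$ \emph{always} acts trivially on $\NS_{\C}(Y)$, so that sentence cannot be the source of a contradiction.)

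The paper avoids both issues at once by pulling everything back to $X$, where $G$ certainly acts biregularly. Since $\NS_{\C}(X)$ is spanned by the pullback of $\NS_{\C}(X_i)$ together with the (necessarily $G_0$-stable) exceptional divisors of $X \dasharrow X_i$, and $\NS_{\C}(X_m)$ is spanned by $-K_{X_m}$ and the pullback of $\NS_{\C}(Y)$, either hypothesis reduces to: $\NS_{\C}(X)$ is spanned by $-K_X$ and $G_0$-stable prime divisors $D_j$. Now comes the key observation you are missing: any $G_0$-stable prime divisor on $X$ lies in $F = X \setminus U$, and every irreducible component of $F$ has already been arranged to be $G$-stable. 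Hence each $D_j$ is $G$-stable, so $G$ fixes an ample class $M$ written as a combination of $-K_X$ and the $D_j$'s; then $G \le \Aut_{[M]}(X)$ forces $|G:G_0| < \infty$ as in Lemma \ref{G|NS}, contradicting the positive-rank assumption. No descent of $G$ to $X_i$ or $Y$ is needed.
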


Indeed, note that $\NS_{\C}(X_m)$ is spanned by
$-K_{X_m}$ (which is ample over $Y$)
and the pullback of $\NS_{\C}(Y)$, and $\NS_{\C}(X)$
is spanned by the pullback of $\NS_{\C}(X_i)$ and (necessarily $G_0$-stable)
exceptional divisors of $X \dasharrow X_i$.
Thus we only need to rule out the possibility that
$\NS_{\C}(X)$ is spanned by $-K_X$, and $G_0$-stable divisors $D_i$
all of which are necessarily contained in $F$ and hence $G$-stable.

Write an ample divisor $M$ on $X$ as a combination
of $-K_X$ and $D_i$'s. Then $G \le \Aut_{[M]}(X)$,
so $|G/G_0| < \infty$ as in the proof of Lemma \ref{G|NS},
contradicting the assumption.

\begin{claim}\label{c3ThF}
$X_m$ and hence $Y$ contain a $G_0$-fixed point
$($here we use that $\dim X = 3)$.
\end{claim}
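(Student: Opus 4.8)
The plan is to produce a point of $X_m$ fixed by all of $G_0$; since $X_m\to Y$ is $G_0$-equivariant (Claim \ref{c1ThF}(2)), its image in $Y$ is then a $G_0$-fixed point of $Y$, so it suffices to treat $X_m$. Recall that $G_0$ is a connected linear algebraic group acting on the normal projective threefold $X_m$ with a Zariski-dense open orbit $U_m$, so $\dim G_0\ge 3$; recall also that $\dim Y\in\{1,2\}$, because $\dim Y<\dim X=3$ while $\dim Y=0$ would force $\NS_{\C}(X_m)=\C\,(-K_{X_m})$, contradicting Claim \ref{c2ThF}.

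First I would apply Borel's fixed-point theorem to a Borel subgroup $B\le G_0$: as $X_m$ is complete, the fixed locus $X_m^{B}$ is nonempty. Choose $P\in X_m^{B}$. Then $\Stab_{G_0}(P)\supseteq B$ is a parabolic subgroup of $G_0$, so the orbit $Z:=G_0\cdot P\isom G_0/\Stab_{G_0}(P)$ is complete; hence $Z$ is a closed $G_0$-orbit and a flag variety of $G_0$, with $\dim Z\le\dim X_m=3$. If $\dim Z=0$ then $P$ is the desired $G_0$-fixed point and we are done, so the entire point is to exclude $\dim Z\ge1$; note that in that case $G_0$ is necessarily non-solvable.

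Assume $\dim Z\ge 1$. To keep track of the $G$-action I would replace $Z$ by $\mathcal Z$, the union of the $G_0$-orbits of minimal dimension; this is a closed $G_0$-stable subvariety each of whose irreducible components is a closed $G_0$-orbit, hence a flag variety of $G_0$, and since $G$ normalizes $G_0$ the subvariety $\mathcal Z$ is $G$-stable after replacing $G$ by a finite-index subgroup. This is where $\dim X=3$ enters: a flag variety of a linear algebraic group of dimension $1$, $2$ or $3$ is, respectively, $\PP^1$; $\PP^2$ or $\PP^1\times\PP^1$; or $\PP^3$, the smooth quadric threefold, $\PP^1\times\PP^2$, $(\PP^1)^3$, or the full flag threefold $\mathrm{Fl}_{1,2}(\C^3)$. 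On each of these $\Aut/\Aut_0$ is finite and $\Aut_0$ is semi-simple, and $G_0$ restricts to a transitive connected subgroup of $\Aut_0$. If $\mathcal Z=X_m$ is a flag threefold, then a normalizer computation on this short list, combined with $G_0\lhd G$, shows that $G/G_0$ is finite, contrary to hypothesis. If the components of $\mathcal Z$ are surfaces, each is a $G$-stable prime divisor $D\subset X_m$ with $D\isom\PP^2$ or $\PP^1\times\PP^1$; here I would analyse the $G$-action on $D$ and on the normal bundle $N_{D/X_m}$ along the general (hence smooth) locus of $X_m$, and use the strong primitivity of $(X,G)$ together with Claim \ref{c2ThF} to reach a contradiction. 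If the components of $\mathcal Z$ are curves, each is a $G$-stable copy of $\PP^1$, and I would study its image under $X_m\to Y$ and the resulting $G$-equivariant structure, again contradicting strong primitivity (or dropping the relevant dimension and iterating).

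The hard part is exactly the case $\dim Z\ge1$: Borel's theorem supplies a closed orbit, not a fixed point, so one has to rule out positive-dimensional closed $G_0$-orbits in $X_m$, and this is where the hypotheses that $G/G_0$ is infinite (almost abelian of positive rank) and that $(X,G)$ is strongly primitive become indispensable, entering through the dimension-$\le 3$ classification of flag varieties. I expect the surface subcase — excluding a $G$-stable $\PP^2$ or $\PP^1\times\PP^1$ inside $X_m$ — to require the most care, since it forces one to control the a priori infinite subgroup of $G$ acting trivially on such a divisor and to reconcile it with the contraction $X_m\to Y$.
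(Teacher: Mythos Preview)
Your approach via Borel's fixed-point theorem and a classification of flag varieties of dimension $\le 3$ is genuinely different from the paper's, but as written it has a real gap: the case $\dim Z\ge 1$ is only sketched. You say you ``would analyse the $G$-action on $D$ and on the normal bundle'' in the surface subcase and ``would study its image under $X_m\to Y$'' in the curve subcase, without carrying either out; you yourself anticipate the surface case will ``require the most care''. Controlling the normalizer of a transitive connected subgroup of $\Aut_0(\BPP^2)$ or $\Aut_0(\BPP^1\times\BPP^1)$ inside the full automorphism group of $X_m$, and reconciling this with strong primitivity, is not a formality, and nothing in your outline forces a contradiction.

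The paper bypasses all of this with a short singularity argument. By Claim~\ref{c1ThF}(3)(4) one has $s<m$, so the MMP $X=X_0\dasharrow\cdots\dasharrow X_m$ contains at least one flip; let $X_t\dasharrow X_{t+1}$ be the \emph{last} one, so that $X_{t+1}\to\cdots\to X_m$ are divisorial contractions. A smooth threefold admits no flip, and a threefold flip always leaves singular points on the flipped side (cf.\ \cite[Cor.~5.18]{KM}); since terminal threefold singularities are isolated, $\Sing X_{t+1}$ is a nonempty \emph{finite} set. The connected group $G_0$ therefore fixes it pointwise, and its image under the morphisms $X_{t+1}\to\cdots\to X_m\to Y$ gives the required $G_0$-fixed points. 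This is exactly where $\dim X=3$ is used: both the finiteness of the terminal singular locus and the structure of flips are threefold phenomena.
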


Indeed, note that a smooth threefold has no flip and a flip preserves
the singularity type of a threefold.
By Claim \ref{c1ThF}, for some $m-1 \ge t \ge s$,
$X_t \dasharrow X_{t+1}$ is a flip
and $X_{t+1} \to \cdots \to X_m$ is the composite of
extremal divisorial contractions. So the non-empty finite set
$\Sing X_{t+1}$ (cf. \cite[Corollary 5.18]{KM}) and its image on $X_m$ are fixed by $G_0$.

\begin{claim}\label{c4ThF}
It is impossible that $\dim Y \le 1$.
\end{claim}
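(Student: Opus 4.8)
The plan is to rule out the two remaining possibilities $\dim Y = 0$ and $\dim Y = 1$ one at a time, in each case producing a contradiction with Claim~\ref{c2ThF}; in the curve case the $G_0$-fixed point provided by Claim~\ref{c3ThF} is the crucial extra ingredient.

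First I would dispose of $\dim Y = 0$. Then $Y$ is a point, so $\rho(Y) = 0$, and since $f \colon X_m \to Y$ is an extremal Fano contraction, $\rho(X_m) = 1 + \rho(Y) = 1$; thus $\NS_\C(X_m)$ is one-dimensional. Moreover the general fibre $X_{m,y}$ now equals $X_m$, so the hypothesis that $-K_{X_m} \, | \, X_{m,y}$ be ample says exactly that $-K_{X_m}$ is ample, hence nonzero, hence a spanning vector of $\NS_\C(X_m)$. So $\NS_\C(X_m)$ is spanned by $-K_{X_m}$ alone, which contradicts Claim~\ref{c2ThF} applied with $i = m$. (Note that one cannot here run the shorter argument from the proof of Claim~\ref{c1ThF}(3), since $G$ need not act biregularly on $X_m$, so it need not fix the ample class of $-K_{X_m}$; it is precisely for this reason that Claim~\ref{c4ThF} must be stated separately, and the route through Claim~\ref{c2ThF}, which uses only the $G_0$-action pulled back to $X$, is the appropriate one.)

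Next I would treat $\dim Y = 1$. Here $Y$ is a normal projective curve, hence smooth, so $\NS_\C(Y) \cong \C$ is generated by the class of any closed point. By Claim~\ref{c3ThF} there is a $G_0$-fixed point $p \in Y$; being $G_0$-stable it is, in particular, a $G_0$-periodic prime divisor on $Y$, and $\NS_\C(Y) = \C \, [p]$ is spanned by it. This contradicts the second assertion of Claim~\ref{c2ThF}. Combined with $\dim Y < \dim X = 3$, this forces $\dim Y = 2$.

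I do not expect a real obstacle here: once Claims~\ref{c2ThF} and~\ref{c3ThF} are available the two reductions are short, and the genuine work lies in those claims rather than in the present one. The only point that needs care is that the fixed-point statement of Claim~\ref{c3ThF} really is being used in an essential way in the curve case — a positive-dimensional linear algebraic group may act transitively on $\PP^1$ (for instance $\PGL_2$ or $\SL_2$), so without a $G_0$-fixed point no point of $Y$ would be $G_0$-periodic and Claim~\ref{c2ThF} would not apply. If one wished to avoid the second clause of Claim~\ref{c2ThF} entirely, an alternative for $\dim Y = 1$ is: if $G_0 \, | \, Y$ were trivial, the dense $G_0$-orbit of $X_m$ would lie inside a single fibre of $X_m \to Y$, which is absurd since that orbit is $3$-dimensional while the fibres are surfaces; and if $G_0 \, | \, Y$ is nontrivial then $Y \cong \PP^1$, and one concludes again via Claim~\ref{c3ThF}.
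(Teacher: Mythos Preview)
Your proof is correct and follows essentially the same approach as the paper: rule out $\dim Y = 0$ and $\dim Y = 1$ separately by contradicting Claim~\ref{c2ThF}, using Claim~\ref{c3ThF} for the latter. The only cosmetic difference is that for $\dim Y = 1$ you invoke the second clause of Claim~\ref{c2ThF} (that $\NS_{\C}(Y)$ cannot be spanned by $G_0$-periodic divisors), whereas the paper invokes the first clause applied to $X_m$ (observing that $\NS_{\C}(X_m)$, of rank two, is spanned by $-K_{X_m}$ and the fibre over the $G_0$-fixed point $y_0$); since the proof of Claim~\ref{c2ThF} shows these two clauses reduce to the same statement, the arguments are interchangeable.
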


Indeed, if $\dim Y = 0$,
then $\NS_{\C}(X_m)$ is of rank one and spanned by
$-K_{X_m}$ (which is ample over $Y$).
This contradicts Claim \ref{c2ThF}.
If $\dim Y = 1$
then the rank two space $\NS_{\C}(X_m)$ is spanned by $-K_{X_m}$
(which is ample over $Y$) and
the fibre over a $G_0$-fixed point $y_0$
(cf.\ Claim \ref{c3ThF}).
This contradicts Claim \ref{c2ThF}.

\par \vskip 1pc
We continue the proof of Theorem \ref{ThF}.
Take an extremal ray on $X_s$ generated by a rational curve $\ell$
and let $X_s \dasharrow X_{s+1}$ be the flip (cf.\ Claim \ref{c1ThF} for $s$).
Note that $G_0$ stabilizes all irreducible components $E_i$ of
the exceptional locus of the flipping contraction $X_s \to Y_s$
and $G$ (replaced by a finite-index subgroup) stabilizes
all irreducible components $D_{ij}$
of the Zariski closure of $\cup_{g \in G} \, g(E_i)$, because $G_0 \lhd G$.
These $D_{ij}$ are unions of `small' $G_0$-orbits and
hence are contained in the image of the algebraic subset $F \subset X$.

If $\dim D_{ij} = \dim E_i = 1$, then
$G$ preserves the extremal ray $\R_{\ge 0}[\ell] \subseteq \NE(X_s)$ and
we can descend $G$ to a biregular action on $X_{s+1}$
(cf.\ \cite[Lemma 3.6]{uniruled}).
Now apply MMP on $X_{s+1}$ and continue the process.

Assume that $\dim D_{ij} = 2 > \dim E_i = 1$.
If $G_0$ acts trivially on some $g_0E_i$ in
the set $\{g E_i \, | \, g \in G\}$,
then $G_0 = gG_0g^{-1}$ acts trivially on $gg_0 E_i$, i.e., on
all $g'E_i$ ($g' \in G$).
Hence $G_0 \, | \, D_{ij} = \id$.
This contradicts Claim \ref{c5ThF} below.

Suppose that $G_0$ acts non-trivially on some $g_0E_i$
and hence on all $gE_i$ ($g \in G$).
Then these extremal curves $gE_i$ are fibres of the quotient map
$D_{ij} \to D_{ij}/G_0 =: B$
over a curve $B$, hence homologous to each other.
So they give rise to one and the same class in
the extremal ray $\R_{\ge 0}[\ell] \subseteq \NE(X_s)$.
Thus $G$ preserves this extremal ray and
we can descend $G$ to a biregular action on $X_{s+1}$
(cf.\ \cite[Lemma 3.6]{uniruled}).
Now apply MMP on $X_{s+1}$ and continue the process.
Therefore, we can continue the $G$-equivariant MMP and
reach an extremal Fano fibration
$X_m \to Y$ which is a contradiction (cf.\ Claim \ref{c1ThF}).

To complete the proof of Theorem \ref{ThF}, we still need to prove:

\begin{claim}\label{c5ThF}
It is impossible that $\dim Y = 2$, $\dim D_{ij} = 2$ and $G_0 \, | \, D_{ij} = \id$.
\end{claim}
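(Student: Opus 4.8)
The plan is to descend the whole configuration to the Mori fibre space $f: X_m \to Y$, exploit $\dim Y = 2$, and reach a contradiction with Claim \ref{c2ThF}.

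First I would record that $G_0$ acts on $X_m$ with a dense (hence open, $3$-dimensional) orbit $U_m$: by Claim \ref{c1ThF} the birational maps $X = X_0 \dasharrow \cdots \dasharrow X_m$ are $G_0$-equivariant and $G_0$ acts biregularly on every $X_i$, so the image of the dense $G_0$-orbit $U \subseteq X$ is a single $3$-dimensional $G_0$-orbit, necessarily open in $X_m$. It follows that the induced action $\Gamma := G_0 \, | \, Y$ on the surface $Y$ is non-trivial: otherwise, for $x \in U_m$ we would have $f(U_m) = f(G_0 \cdot x) = G_0 \cdot f(x) = \{f(x)\}$, while $f(U_m)$ is dense in the surface $Y$, which is absurd.

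Next I would track the $G_0$-stable prime divisor $D := D_{ij}$ down the tower $X_s \dasharrow \cdots \dasharrow X_m$. Each step there is a flip or a divisorial contraction, all $G_0$-equivariant with $G_0$ biregular, so $D$ has a well-defined, non-empty, $G_0$-invariant image $D' \subseteq X_m$, of dimension $\le 2$, and $G_0$ still acts trivially on $D'$, since triviality of the action is preserved under the dominant $G_0$-equivariant maps relating $D$ to $D'$ (this remains true if $D$ is contracted by one of the divisorial steps, in which case $D'$ is merely lower-dimensional). Then $f(D') \subseteq Y$ is a non-empty $G_0$-invariant closed subset on which $\Gamma$ acts trivially. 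Were $f(D') = Y$, we would get $\Gamma = \id$, contrary to the previous paragraph; hence $f(D')$ is a proper $\Gamma$-invariant subset on which $\Gamma$ acts trivially, so it contains a $\Gamma$-fixed point. Thus $Y$ carries a $\Gamma$-fixed point.

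Finally I would invoke the structural fact that, for a normal projective (rational) surface $Y$ that is almost homogeneous under a connected group $\Gamma$, the existence of a $\Gamma$-fixed point on $Y$ forces $\NS_{\C}(Y)$ to be spanned by the $\Gamma$-stable prime divisors of $Y$ (which are then the components of $Y$ minus the dense orbit). Granting this and noting that a $\Gamma$-stable prime divisor is a $G_0$-periodic divisor on $Y$ (for $G_0$ connected, $G_0$-periodic $=$ $G_0$-stable), we conclude that $\NS_{\C}(Y)$ is spanned by $G_0$-periodic divisors, contradicting Claim \ref{c2ThF}; this completes the proof of Theorem \ref{ThF}. I expect the only real difficulty to be this last structural fact, essentially a short classification of the normal almost homogeneous surfaces admitting a fixed point of the acting connected group, together with the verification that in each such case the N\'eron--Severi group is generated by invariant divisors: heuristically the fixed point forces $\Gamma$ to contain, near that point, a subtorus or a unipotent subgroup whose orbit closures supply enough invariant divisors, exactly as in the toric picture, whereas the genuinely ``large'' almost homogeneous surfaces --- $\BPP^2$ under $\PGL_3$, $\BPP^1 \times \BPP^1$ under $\PGL_2 \times \mathbb{G}_m$ or $\PGL_2 \times \PGL_2$, a Hirzebruch surface under its full automorphism group, or the blow-up of $\BPP^2$ at one point under a maximal parabolic --- have no fixed point at all, and it is precisely these for which $\NS_{\C}$ can fail to be generated by boundary divisors. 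A secondary, purely bookkeeping point is to verify that $D'$ is genuinely non-empty and inherits a trivial $G_0$-action through the non-$G$-equivariant part of the minimal model program.
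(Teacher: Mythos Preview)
Your structural fact in the third paragraph is false, and this is a genuine gap. Take $Y = \BPP^2$ and $\Gamma \le \PGL_3$ the stabilizer of a point $p$. This $\Gamma$ is connected, acts transitively on $\BPP^2 \setminus \{p\}$ (so $Y$ is almost homogeneous under $\Gamma$), and has $p$ as a fixed point; yet $\Gamma$ stabilizes no curve on $\BPP^2$. Indeed, by transitivity on the complement of a point, any $\Gamma$-invariant curve would have to be contained in $\{p\}$, which is absurd; and one checks directly that $\Gamma$ permutes transitively both the lines through $p$ and the lines avoiding $p$. Hence $\NS_{\C}(Y)$ is not spanned by $\Gamma$-stable prime divisors. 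Your list of ``large'' almost homogeneous surfaces with no fixed point overlooks this case: you mention the blow-up of $\BPP^2$ at a point under a maximal parabolic (where indeed there is no fixed point), but not $\BPP^2$ itself under the same parabolic, where there is one.

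The paper's proof closes this gap by extracting from $D_{ij}$ not merely a $G_0$-fixed point on $Y$ but a $G_0$-stable \emph{curve}. To do so it does not push $D_{ij}$ down through the flips to $X_m$ (where, as you yourself note, it might be contracted), but instead resolves the indeterminacy of $X_s \dasharrow Y$ and applies Miyanishi's theorem to obtain a $G_0$-equivariant conic fibration $\pi' : X' \to Y'$ with $X', Y'$ smooth and $Y' \to Y$ birational. The proper transform $D_{ij}' \subset X'$ is still a surface with trivial $G_0$-action, and since $\pi'$ has one-dimensional fibres, its image $C_{ij} \subset Y'$ is either all of $Y'$ (forcing $G_0 \, | \, Y = \id$, contradicting Claim \ref{c2ThF}) or a $G_0$-stable curve. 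With this curve in hand, a short case analysis on $K_{Y'}^2$ shows that $\NS_{\C}(Y)$ is spanned by $G_0$-stable divisors, contradicting Claim \ref{c2ThF}. The existence of that invariant curve is precisely what rules out the parabolic counterexample above, and it is what your argument is missing.
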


We now prove Claim \ref{c5ThF}.
$X_m \to Y$ is known as an extremal conic fibration.
We may take a $G_0$-equivariant blowup $\hat{X} \to X_s$ to resolve indeterminacy of the composite
$\pi_s : X_s \dasharrow X_m \to Y$ so that the induced map
$\hat{\pi}: \hat{X} \to Y$ is holomorphic and $G_0$-equivariant.
By \cite[Theorem 4.8]{Mi}, there exist blowups
$\sigma_x: X' \to \hat{X}$ and $\sigma_y : Y' \to Y$ with
$X'$ and $Y'$ smooth, and
extremal conic fibration $\pi' : X' \to Y'$ such that
$\hat{\pi} \circ \sigma_x = \sigma_y \circ \pi'$.
We may also assume that the four maps above are $G_0$-equivariant
by taking extra blowups so that they are equivariant
(noting that $G_0$ stabilizes extremal rays).

We will reach a contradiction to Claim \ref{c2ThF}.
To do so, we consider both $Y$ and $Y'$.

Indeed, if $K_{Y'}^2 \le 7$, then $\NS_{\C}(Y')$ (and hence $\NS_{\C}(Y)$) are spanned by
$G_0$-stable curves (i.e., the negative curves on $Y'$).
This contradicts Claim \ref{c2ThF}.
Therefore, we may assume that $K_{Y'}^2 = 9$ or $8$, and
$Y'= \BPP^2$ or a Hirzebruch surface $F_d$
of degree $d \ge 0$.

If $Y' = \BPP^2$ or $Y' = \BPP^1 \times \BPP^1$, then $Y'$
has no negative curve to contract, so
$Y' = Y$.

If $Y = F_d$ then $G_0$ stabilizes a fibre passing through a
fixed point of $y_0$ of $G_0 | Y$
(cf.\ Claim \ref{c3ThF}), and the zero-section through $y_0$
(resp.\ the unique $(-d)$-curve) when $d = 0$ (resp.\ $d \ge 1$).
This contradicts Claim \ref{c2ThF}.

Therefore, we may assume that either $Y = Y' = \BPP^2$, or
$F_d = Y' \to Y$ (with $d \ge 1$) is the contraction of
the unique $(-d)$-curve. Thus the Picard number $\rho(X_m) = 1 + \rho(Y) = 2$.

Let $D_{ij}' \subset X'$ be the proper transform of $D_{ij} \subset X_s$.
Then $G_0$ acts trivially on $D_{ij}'$ because so does $G_0$ on $D_{ij}$.
Since every fibre of $\pi' : X' \to Y'$ is $1$-dimensional,
the image $C_{ij} \subseteq Y'$ of $D_{ij}'$ is the whole $Y'$ or a curve,
and $G_0 \, | \, C_{ij} = \id$. Since $G_0 \, | \, Y = \id$ would contradict
Claim \ref{c2ThF}, we may assume that $C_{ij}$ is a curve in $Y'$.
If $Y'= Y = \BPP^2$ (resp.\ $Y' = F_d \to Y$ is the contraction of
the $(-d)$-curve),
then $G_0 | Y$ stabilizes $C_{ij}$ (resp.\ the image of
$C_{ij}$ or every generating line).
This contradicts Claim \ref{c2ThF}. Claim \ref{c5ThF} is proved.

\begin{corollary}\label{Cor1}
Let $X$ be a $3$-dimensional normal projective variety
and $G \le \Aut(X)$ a subgroup of {\rm null entropy}
such that $G_0 := G \cap \Aut_0(X)$ is infinite and
the quotient $G/G_0$ is
an almost abelian group of positive rank.
Then $(X, G)$ is not strongly primitive.
\end{corollary}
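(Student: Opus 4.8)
The plan is to argue by contradiction, assuming that $(X,G)$ is strongly primitive, and then to invoke Theorem \ref{ThF} to force a contradiction with the null-entropy hypothesis. First I would observe that all the hypotheses of Theorem \ref{ThF} are essentially in place: $G_0 = G \cap \Aut_0(X)$ is infinite by assumption, and $G/G_0$ is almost abelian of positive rank. The only gap is that Theorem \ref{ThF} requires $X$ to have $\Q$-factorial terminal singularities, whereas here $X$ is merely normal; so the first step is to reduce to that situation. I would pass to a $G$-equivariant $\Q$-factorial terminalization $\widetilde X \to X$ (which exists by the minimal model program, e.g. \cite[Corollary 1.4.3]{BCHM}, and is $G$-equivariant because $G$ acts on $X$ and the terminalization is canonically determined up to the choice of extremal contractions, which $G$ permutes among finitely many — replace $G$ by a finite-index subgroup). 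Entropy, almost-abelianness of $G/G_0$, and strong primitivity are all preserved under this birational modification (strong primitivity is a birational notion by its very definition), and $\widetilde G_0 := G \cap \Aut_0(\widetilde X)$ is still infinite since it contains a finite-index image of $G_0$.

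Next, having arranged that $X$ (renamed) is $\Q$-factorial terminal and $(X,G)$ is strongly primitive, Theorem \ref{ThF} applies directly and tells us that $X$ is a complex $3$-torus and that $G_0$ is Zariski-dense in $\Aut_0(X)$. But $\Aut_0(X)$ of a complex torus is the group $T$ of translations, which is $3$-dimensional; so $G_0$ being Zariski-dense in it means $G_0$ is an infinite group of translations. Translations act trivially on all of $H^i(X,\Z)$, hence trivially on $L_{\C}$, so they have null entropy — this is consistent, not yet a contradiction. The contradiction must instead come from the \emph{whole} group $G$: since $G/G_0$ is almost abelian of positive rank $r \ge 1$, I would produce, after replacing $G$ by a finite-index subgroup and using Lemma \ref{modfin}, an element $g \in G$ whose image $\bar g$ in $G/G_0 = G \,|\, \NS_{\C}(X)$ (equality up to finite kernel, by Lemma \ref{G|NS} or \cite[Proposition 2.2]{Li}) generates a $\Z$-factor. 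On a complex $3$-torus $X = V/\Lambda$, the action of $g$ on $L_{\C} = \NS_{\C}(X)$ (a subspace of $H^2$) is governed by the linear part $g_* \in \Aut_{\group}(X) \subset \GL(\Lambda) = \GL_6(\Z)$; the point is that an element of infinite order in $\GL(\Lambda)$ acting nontrivially on cohomology has, generically, an eigenvalue off the unit circle on $H^1$ and hence on $H^2$, which would give positive entropy. More precisely, since $G$ has null entropy by hypothesis, $G \,|\, H^*(X)$ consists of quasi-unipotent elements, so $G \,|\, L_{\C}$ is virtually unipotent by Theorem \ref{ThB}(1); but a virtually unipotent subgroup of $\GL(L_{\C})$ that is also almost abelian of positive rank must still contain an element of infinite order, and that element's unipotent action on the integral lattice $L$ is consistent — so this route alone does not immediately close.

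The cleaner finish, which I expect to be the real engine, is to combine Theorem \ref{ThF}'s conclusion with the structure of $\Aut$ of a torus: we have $\Aut(X) = T \rtimes \Aut_{\group}(X)$ with $T = \Aut_0(X)$, so $G/G_0$ injects into $\Aut_{\group}(X)$ (modulo finite), and $G/G_0$ is almost abelian of positive rank. Now I would apply Corollary \ref{Cor2}, or rather re-examine its proof: under strong primitivity, Corollary \ref{Cor2} asserts $\Aut_0(X) = \{\id_X\}$ and $h^1(X,\OO_X) = 0$ — but a complex $3$-torus has $\Aut_0(X) = T \cong X \ne \{\id_X\}$ and $h^1(X,\OO_X) = 3 \ne 0$. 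That is the contradiction. So the logical skeleton is: strong primitivity $+$ (null entropy, $G_0$ infinite, $G/G_0$ almost abelian positive rank) $\Rightarrow$ [by Theorem \ref{ThF} after $\Q$-factorial terminalization] $X$ is a $3$-torus; but the same hypotheses via Corollary \ref{Cor2} give $\Aut_0(X)$ trivial and $q(X) = 0$, which a $3$-torus violates. Hence $(X,G)$ cannot be strongly primitive. The main obstacle I anticipate is the very first step — checking carefully that the $\Q$-factorial terminalization can be chosen $G$-equivariantly (after passing to a finite-index subgroup of $G$) and that it does not destroy the ``$G_0$ infinite'' hypothesis nor the almost-abelian rank of $G/G_0$; once that bookkeeping is done, the rest is a direct appeal to the two preceding results and the explicit structure of automorphisms of a torus.
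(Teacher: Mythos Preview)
Your reduction to the torus case is sound (the paper does the same, via an equivariant resolution rather than a terminalization, but either works). The genuine gap is in how you finish. You propose to invoke Corollary~\ref{Cor2} to force $\Aut_0(X) = \{\id\}$ and $q(X) = 0$, contradicting $X$ being a torus; but in this paper Corollary~\ref{Cor2} is \emph{deduced from} Corollary~\ref{Cor1} --- its proof literally ends with ``Now Corollary~\ref{Cor2} follows from Corollary~\ref{Cor1}.'' So your argument is circular, and ``re-examining its proof'' does not help because the only content of that proof beyond bookkeeping is the appeal to Corollary~\ref{Cor1}.

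What the paper actually does once $X$ is a torus is precisely the step you sketched and then abandoned: it \emph{uses} the null-entropy hypothesis to construct an explicit $G$-equivariant fibration. After passing to a finite-index subgroup, write $G/G_0 = \langle \bar g_1, \dots, \bar g_r\rangle \cong \Z^r$ and decompose $g_i = T_{t_i}\circ h_i$ with $h_i \in \Aut_{\group}(X)$. Null entropy together with Kronecker's theorem forces each $h_i^*\,|\,H^0(X,\Omega_X^1)$ to be unipotent --- this is exactly the observation you reached via Theorem~\ref{ThB}(1) but did not exploit. Unipotence of $h_1$ makes the identity component $B$ of the fixed locus $X^{h_1}$ a subtorus with $\dim B = \dim\Ker(h_1^*-\id)\in\{1,2\}$. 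Since the $h_j$ commute with $h_1$ (they commute modulo $\Aut_0(X)$, hence on the nose, as both fix the origin), each $h_j$ stabilizes $B$; then every $g_j$ and every translation sends a coset $x+B$ to another coset. Thus $X\to X/B$ is $G$-equivariant with $0<\dim(X/B)<\dim X$, exhibiting non-strong-primitivity directly. You had the ingredients in your middle paragraph but stopped short of assembling them.
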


\begin{proof}
Taking a $G$-equivariant resolution, we may assume that $X$ is smooth.
With our assumption and the proof of Theorem \ref{ThF},
we may assume that $X$ is a complex torus, and $G_0$ is
connected, is closed and has a Zariski-dense open orbit in $X$.
Thus $G_0 = \Aut_0(X)$.
By Lemma \ref{modfin} and replacing $G$ by a suitable finite-index subgroup,
we may assume that $G/G_0$ is abelian and equals
$\langle \bar{g}_1, \dots, \bar{g}_r \rangle$
for $g_i \in G$, where the order $o(\bar{g}_i) = \infty$;
moreover, $g_i$ has unipotent representation matrix on $H^0(X, \Omega_X^1)$
using Kronecker's theorem as in \cite[Lemma 2.14]{JDG}.
Write $g_i = T_{t_i} \circ h_i$ where $T_{t_i}$ is the translation by $t_i$
and $h_i$ is a group automorphism.
As in \cite[Lemma 2.15]{JDG}, the identity connected component
$B$ of the fixed locus
$X^{h_1}$ (pointwise) has dimension
equal to that of $\Ker(h_1^* - \id) \subset H^0(X, \Omega_X^1)$
and is hence between $1$ and $\dim X - 1$.
Note that $h_1h_j = h_jh_1$ holds modulo $\Aut_0(X)$ and hence holds in $\Aut(X)$
since both sides fix the origin. Thus $h_j(B)$ is contained in $X^{h_1}$
and hence equals $B$ since it contains the origin.
Now $g_j(x + B) = g_j(x) + B$. So $g_j$ permutes cosets of the quotient torus $X/B$;
the same is true for elements of $\Aut_0(X)$.
Thus, the quotient map $X \to X/B$ is $G$-equivariant.
This proves Corollary \ref{Cor1}.
\end{proof}

\begin{setup}
{\bf Proof of Corollary \ref{Cor2}}
\end{setup}

Taking a $G$-equivariant resolution, we may assume that $X$ is smooth.
If $q(X) > 0$, then, by Claim \ref{c0ThF}, we may
assume that $X$ is a complex torus so that $\Aut_0(X) \ne \{\id_X\}$.
Thus, we may always assume that $\Aut_0(X) \ne \{\id_X\}$.

Replacing $G$ by $G . \Aut_0(X)$ we may assume that $G \ge G_0 := \Aut_0(X)$.
According to Lemma \ref{G|NS}, we have
$G | \NS_{\C}(X) = G/K$ where $|K/G_0| < \infty$.
Thus $G/G_0$ is also almost abelian of positive rank by assumption.
Now Corollary \ref{Cor2} follows from Corollary \ref{Cor1}.

\section{Minimal threefolds}\label{mt}

Below sufficient conditions for being a quotient of a torus are given.

\begin{theorem}\label{Torus}
Let $X$ be a $3$-dimensional minimal projective variety.
Assume that one of the following two properties is satisfied:
\begin{itemize}
\item[(1)]
The first Chern class $c_1(X) = 0$.
The second Chern class $c_2(X)$ $($as a linear form on $\NS_{\C}(X)$ as in \cite[p. 265]{SW}$)$
has zero intersection with a nef and big $\R$-divisor.
\item[(2)]
There is a subgroup $G \le \Aut(X)$ such that the null set
$N(G)$ is a subgroup of $G$ and $G/N(G) \cong \Z^{\oplus 2}$.
\end{itemize}
Set $B := \Aut(X)$.
Then there is a $B$-equivariant birational surjective morphism $X \to X'$
such that $X' \cong T/F$ for a finite group $F$ acting freely outside a finite set of
an abelian variety $T$ of dimension three. Further, the action of $B$ on $X'$
lifts to an action of a group
$\widetilde{B}$ on $T$ such that $\widetilde{B} / F \cong B$.
\end{theorem}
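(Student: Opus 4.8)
The plan is to reduce Theorem \ref{Torus} to the structural criterion of \cite{SW}: if $X$ is a normal projective threefold with $c_1(X) = 0$ and $c_2(X) \cdot H = 0$ for some nef and big $\R$-divisor $H$, then $X$ has a birational morphism to a quotient $T/F$ as in the statement, and moreover this construction is canonical enough to be $\Aut(X)$-equivariant. So the work splits into two parts: first, show that in case (2) the hypotheses of case (1) are forced; second, upgrade the conclusion of \cite{SW} to the equivariant statement about $B = \Aut(X)$ and the lifted group $\widetilde{B}$.

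\textbf{Case (2) implies case (1).}\ Suppose $N(G) \lhd G$ and $G/N(G) \cong \Z^{\oplus 2}$. By Theorem \ref{ThC}(3) (read in reverse, or directly by \cite{Z-Tits}) this is exactly the situation in which $G$ is polarized by a quasi-nef sequence; since $\dim X = 3$ and the rank is $2 = n - 1$, we obtain a full flag $L_1, L_1 L_2$ of quasi-nef classes with two independent characters $\chi_1, \chi_2$. First I would use the nef and big-ness: after passing to $N(G)$, every $g \in N(G)$ acts with all eigenvalues of modulus one on $H^{1,1}(X)$ (cf. \ref{setup2.1}), and $g^* L_1 = L_1$, so $L_1$ is a common nef eigenvector fixed by $N(G)$, while on it $G$ acts by $\chi_1$. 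The point is that minimality ($K_X$ nef) together with $c_1(X)$: one invokes the abundance theorem for threefolds to see that $K_X \equiv 0$, i.e.\ $c_1(X) = 0$ — indeed if $K_X \not\equiv 0$ then the Iitaka fibration is $G$-equivariant and nontrivial, contradicting that $G/N(G) \cong \Z^{\oplus 2}$ has the maximal possible rank $n-1$ acting on the base fibration of strictly smaller dimension (as in Remark \ref{Rem1}(1)). Once $c_1(X) = 0$, Miyaoka's pseudo-effectivity of $c_2$ says $c_2(X) \cdot H \ge 0$ for $H$ nef; to get equality for some nef and big $H$, I would feed the $G$-action into the Bogomolov–Beauville–type inequality: the quasi-nef class $L_1$ (resp.\ the positive power of $X$ realizing the two-dimensional group action) gives a nef and big $\R$-divisor class which, because $G$ acts on it by a nontrivial character yet preserves $c_2(X)$ as a linear form, must be annihilated by $c_2(X)$ — otherwise $c_2(X) \cdot L_1 > 0$ would be scaled by $\chi(g) \ne 1$ under $g^*$ while the left side is fixed, forcing $\chi_1 \equiv 1$, a contradiction. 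This is the argument that reduces (2) to (1).

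\textbf{Equivariance of the \cite{SW} construction.}\ Granting case (1), \cite{SW} produces a birational morphism $X \to X'$ with $X' \cong T/F$. The content here is that $X'$, $T$ and $F$ are intrinsic to $X$: the morphism $X \to X'$ should be the terminalization / canonical model in the appropriate sense, or be characterized as contracting exactly the curves on which the relevant divisor class is zero, so that it is preserved by every automorphism of $X$; hence $B = \Aut(X)$ descends to act on $X'$. Then $T \to X'$ is the index-one cover followed by taking the quotient by the finite group $\pi_1^{\mathrm{alg}}$ of the smooth locus (or: $T$ is the unique abelian variety admitting $X'$ as a quotient by a finite group acting freely in codimension one), which is functorial, so the $B$-action lifts to $T$ up to the deck group $F$; this gives the extension $1 \to F \to \widetilde B \to B \to 1$, i.e.\ $\widetilde B / F \cong B$, by pulling back automorphisms along the Galois cover $T \to X' = T/F$.

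\textbf{Main obstacle.}\ I expect the hard part to be precisely the second part — showing the \cite{SW} data $(X', T, F)$ depends on $X$ functorially rather than just existing. One must pin down $X \to X'$ by a universal property (e.g.\ it is the morphism to $\Proj$ of a canonically defined graded ring, or the unique birational contraction to a variety with at worst the mild singularities of a torus quotient), and then argue that a finite étale-in-codimension-one cover by an abelian variety is unique, so that $\Aut(X')$ — and in particular the image of $\Aut(X)$ — lifts through it. The reduction (2) $\Rightarrow$ (1), by contrast, is a relatively soft consequence of Theorem \ref{ThC}, the abundance theorem, and Miyaoka's inequality, with the character-scaling trick doing the real work to get $c_2(X) \cdot H = 0$.
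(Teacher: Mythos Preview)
Your overall strategy is right, but there are two genuine gaps.

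\textbf{The \cite{SW} criterion and the missing contraction.} You state that \cite{SW} gives, from $c_1(X)=0$ and $c_2(X)\cdot H=0$ for one nef and big $H$, a birational morphism $X\to X'\cong T/F$. In fact the result in \cite{SW} applies only once $c_2$ vanishes as a linear form on all of $\NS_{\C}$, and then it gives $X'=T/F$ directly with no birational modification. The paper bridges this: set $D:=\Nef(X)\cap c_2(X)^{\perp}$ and $C:=\NE(X)\cap D^{\perp}$; since $D$ contains a nef and big class, an interior point $A\in D$ is nef and big, and by \cite{BCHM} there is a birational contraction $\sigma:X\to X'$ collapsing exactly the curves in $C$, with $A=\sigma^*A'$ for some ample $A'$. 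A projection-formula and Miyaoka pseudo-effectivity argument then forces $c_2(X')\cdot P=0$ for every ample $P$, i.e.\ $c_2(X')=0$; now \cite{SW} applies to $X'$. This is also what makes the equivariance automatic, not hard: the cones $D$ and $C$ are defined purely in terms of $c_2(X)$, hence are $\Aut(X)$-stable, so $\sigma$ is $B$-equivariant without any further universal-property argument. Your proposal skips this contraction entirely and then, unsurprisingly, flags equivariance as the ``main obstacle''; once you build $X\to X'$ this way it is not an obstacle at all, and the lift to $T$ is the standard \cite{Be}-type argument via the maximal lattice in $\pi_1(X'\setminus\Sing X')$.

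\textbf{The nef and big class in case (2).} Your character-scaling trick is exactly right for showing $c_2(X)\cdot L_1=0$ when $g^*L_1=\chi_1(g)L_1$ with $\chi_1(g)\ne1$. But that same scaling gives $L_1^3=\chi_1(g)^3 L_1^3$, so $L_1^3=0$ and $L_1$ is never big. You need a further step: produce three nef common eigenvectors $L_1,L_2,L_3$ corresponding to distinct characters and argue (e.g.\ via \cite[Lemma 4.4]{DS}) that $L_1\cdot L_2\cdot L_3>0$, so that their sum is nef and big and lies in $c_2(X)^{\perp}$. The paper packages this as a citation to \cite[Claim 2.5(1)]{CY3} after reducing $G\,|\,\NS_{\C}(X)$ to a solvable group with connected closure; your sketch gestures at ``the positive power of $X$ realizing the two-dimensional group action'' but does not actually construct the big class. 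The reduction $\kappa(X)=0$ (hence $K_X\sim_{\Q}0$) via maximality of the rank and abundance is fine.
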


\begin{proof}
Assume the condition (1) in Theorem \ref{Torus}.
Let
$$D := \Nef(X) \cap c_2(X)^{\perp} = \{M \in \Nef(X) \, | \, M . c_2(X) = 0\}$$
be a closed subcone of the nef cone $\Nef(X)$ of $X$.
Let
$$C := \NE(X) \cap D^{\perp} = \{[\ell] \in \NE(X) \, | \, \ell . D_0 = 0 \,\, \text{for all} \,\, D_0 \in D\}$$
be a closed subcone of the closed cone $\NE(X)$ of effective curves on $X$.
Then $c_2(X) \in C$ by definition and using Miyaoka's pseudo-effectivity of $c_2$
for any minimal variety $X$ of dimension $n$:
$c_2(X) \cdot (H_1 \cdots H_{n-2}) \ge 0$ for all nef divisors $H_i$ on $X$
(cf. \cite[Theorem 4.1, Proposition 1.1]{SW}).

By assumption, $D$ contains a nef and big $\R$-divisor.
Let $A$ be an interior element of $D$.
As in \cite[Theorem 3.9.1]{BCHM}, there is a birational contraction
$$\sigma: X \to X'$$
such that a curve $\ell \subset X$ is contracted to
a point if and only if the class $[\ell]$ is contained in $C$,
and such that $A = \sigma^*A'$ for some ample $\R$-divisor $A'$.

By the projection formula and since $A$ is contained in $D$, we
have $A' . c_2(X') = \sigma^*A'. c_2(X)$
$= A . c_2(X)= 0$.
For any ample $\R$-divisor $P$ on $X'$, a small perturbation
$A_{\varepsilon}' := A' - \varepsilon P$ of the ample divisor $A'$ is
still ample because the ample cone of $X'$ is open.
By Miyaoka's pseudo-effectivity of $c_2$ for minimal variety,
we have
$$0 \le  \varepsilon P . c_2(X') \le (A_{\varepsilon}' + \varepsilon P) . c_2(X') = A' . c_2(X') = 0 .$$
So $P . c_2(X') = 0$. Since $\NS_{\C}(X')$ is spanned by ample divisors,
we obtain then $c_2(X') = 0$ as a linear form on $\NS_{\C}(X')$.

Thus, $c_1(X)$ and $c_2(X)$ vanish, and by \cite[Corollary, p. 266]{SW}, we have $X' = T/F$
where $F$ is a finite group acting on the abelian variety $T$ freely outside
a finite set.
Since $D$ and hence $C$ are stable under the action of $B := \Aut(X)$, the
contraction $\sigma : X \to X'$ is $B$-equivariant.
By \cite[\S 3, especially Proposition 3]{Be} applied to \'etale-in-codimension-one covers,
replacing $T$ by the finite cover corresponding to the maximal lattice
in $\pi_1(X' \setminus \Sing X')$,
we can lift the action of $B$ on $X'$ to an action of
a group $\widetilde{B}$ on $T$ such that $\widetilde{B}/\Gal(T/X') \cong B$.
This proves Theorem \ref{Torus} under condition (1).

Next, assume condition (2) in Theorem \ref{Torus}.
The maximality of the rank of $G/N(G)$ and \cite[Lemma 2.11]{Z-Tits}
imply the Kodaira dimension $\kappa(X) = 0$.
The abundance theorem for minimal threefolds implies $K_X \sim_{\Q} 0$
(cf.~\cite[3.13]{KM}).
Replacing $G$ by a finite-index subgroup, we may assume that $G \, | \, \NS_{\C}(X)$ is
solvable and has connected Zariski-closure in $\GL(\NS_{\C}(X))$
(cf.~Theorem \ref{ThB} or \ref{ThC}).
By \cite[Claim 2.5(1)]{CY3}, $c_2(X)$ is perpendicular to a nef and big $\R$-divisor.
We are reduced to condition (1).
This proves Theorem \ref{Torus}.
\end{proof}

\par \vskip 0.5pc
The next is the key step towards Theorem \ref{Cor3}.

We recall the notation in the Introduction: For a subgroup $G$ of $\Aut(X)$,
let $\overline{G} \subseteq \GL(\NS_{\C}(X))$ be the Zariski-closure
of $G \, | \, \NS_{\C}(X)$ and $R(\overline{G})$ its solvable radical,
both of which are defined over $\Q$.
We have
a natural composition of homomorphisms:
$$\iota: G \to G \, | \, \NS_{\C}(X) \to \overline{G}.$$

\begin{theorem}\label{Ssimple}
Let $X$ be a $3$-dimensional minimal projective variety and
$G \le \Aut(X)$ a subgroup
such that $G \, | \, \NS_{\C}(X)$ is not virtually solvable.
Then we have:

\begin{itemize}
\item[(1)]
Suppose that
$R(G) := \iota^{-1}(\iota(G) \, \cap \, R(\widebar{G}))$ is of null entropy.
Then $R(G) \, | \, \NS_{\C}(X)$ is virtually unipotent and hence of null entropy.
Replacing $G$ by a suitable finite-index subgroup,
$G/R(G)$ is embedded as a Zariski-dense subgroup in
$H := \overline{G}/R(\overline{G})$ so that $H(\R)$ is
a semi-simple real linear algebraic group
and is either of real rank $1$ {\rm (cf. \cite[0.25]{Ma})}
or locally isomorphic to $\SL_3(\R)$ or $\SL_3(\C)$.

\item[(2)]
Suppose that $R(G)$ is not of null entropy.  Set $B := \Aut(X)$.
Then there is a $B$-$($and hence $G$-$)$ equivariant birational surjective morphism $X \to X'$
such that $X' \cong T/F$ for a finite group $F$ acting freely outside a finite set of
an abelian variety $T$ of dimension three.
Further, the action of $B$ on $X'$ lifts to an action of a group
$\widetilde{B}$ on $T$ such that $\widetilde{B} / F \cong B$.
\end{itemize}
\end{theorem}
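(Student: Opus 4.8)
The plan is to separate the two cases according to whether $R(G)$ contains an element of positive entropy: the positive-entropy case will be reduced to Theorem~\ref{Torus}(2), and the null-entropy case to the classification of higher-rank lattice actions on threefolds in \cite[Theorem 5.1]{CZ}. First I record reductions valid in both cases. Replacing $G$ by a finite-index subgroup so that $\overline{G}$ is connected, Lemma~\ref{rad}(2) (with $L=\NS(X)$) shows that $G/R(G)$ is embedded as a Zariski-dense subgroup of $H:=\overline{G}/R(\overline{G})$, a semisimple linear algebraic group defined over $\Q$; it is non-trivial since $G\,|\,\NS_{\C}(X)$ is not virtually solvable. Moreover $H(\R)$ is non-compact: otherwise $\overline{G}$ would, up to isogeny, be a product of a unipotent group, a torus and a compact semisimple group, so the arithmetic group $\overline{G}(\Z)$ cut out by the lattice $\NS(X)$ would be virtually solvable, and hence so would its subgroup $G\,|\,\NS_{\C}(X)$, a contradiction.

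\emph{Part (2).} Assume $R(G)$ is not of null entropy. Since $R(G)\,|\,\NS_{\C}(X)$ is Zariski-dense in the solvable group $R(\overline{G})$ it is virtually solvable, and after a further finite-index replacement it is solvable with connected Zariski-closure; so by Theorem~\ref{ThC}(1) the group $R(G)$ is polarized by a quasi-nef sequence $L_1\cdots L_k$ with $1\le k\le 2$. As in the proof of Theorem~\ref{ThC}(2), the associated homomorphism $\varphi\colon R(G)\to(\R^{2},+)$ has kernel the normal subgroup $N(R(G))$ and image $\Z^{\oplus r}$ with $r\ge 1$, an element of positive entropy lying in $R(G)$. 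If $r=2$, then $N(R(G))\lhd R(G)$ and $R(G)/N(R(G))\cong\Z^{\oplus 2}$, so Theorem~\ref{Torus}(2) applied to the pair $(X,R(G))$ yields a $B$-equivariant (with $B=\Aut(X)$) birational surjective morphism $X\to X'\cong T/F$ together with the required lifting of the $\Aut(X)$-action; since $B=\Aut(X)\supseteq G$, this is exactly the assertion of part (2). If $r=1$, pick $g_0\in R(G)$ of positive entropy and let $L_{g_0}$ be its nef eigenclass, $g_0^{*}L_{g_0}=d_1(g_0)L_{g_0}$; a finite-index subgroup of $G$ fixes the ray $\R_{>0}L_{g_0}$ (conjugation by such an element carries $g_0$ to $g_0\,n$ with $n\in N(R(G))$ acting trivially on the polarizing classes, from which $g^{*}L_{g_0}\in\R_{>0}L_{g_0}$), so after finite index $G$ is polarized by the length-one quasi-nef sequence $L_{g_0}$, and Theorem~\ref{ThC}(2) forces $G\,|\,\NS_{\C}(X)$ to be virtually solvable, a contradiction. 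Hence $r=1$ does not occur and part (2) is complete.

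\emph{Part (1).} Assume $R(G)$ is of null entropy. By Theorem~\ref{ThB}(1), $R(G)\,|\,\NS_{\C}(X)$ is virtually unipotent; being Zariski-dense in the connected solvable group $R(\overline{G})$, this forces $R(\overline{G})$ to equal the unipotent radical $U(\overline{G})$, which is the first assertion. It remains to identify $H$, and here I split on whether $c_1(X)=0$ in $\NS_{\C}(X)$. If $c_1(X)\ne 0$, then the Iitaka fibration $f\colon X\to Y$ is $G$-equivariant and non-trivial, so $1\le\dim Y\le 2$ (Remark~\ref{Rem1}(1)); replacing $G$ by a finite-index subgroup, it acts trivially on a suitable model of $Y$ by \cite[Theorem 14.10]{Ue}. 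The general fibre $S$ has $K_S=K_X|_S\sim_{\Q}0$; if $\dim S=1$ then $S$ is an elliptic curve and $G$, acting over $Y$, is virtually abelian, contradicting the non-solvability of $G\,|\,\NS_{\C}(X)$; so $\dim S=2$ and $S$ is a $2$-torus, a K3 surface or an Enriques surface. As in Remark~\ref{Rem1}(1), with respect to the filtration $f^{*}\NS_{\C}(Y)\subseteq\NS_{\C}(X)$ the action of $G$ is (after finite index) block upper-triangular, trivial on $f^{*}\NS_{\C}(Y)$ and, on the quotient, a subquotient of a subgroup of $\SSO(1,\rho(S)-1)$; hence $\overline{G}$ is a unipotent extension of a subgroup of $\SSO(1,\rho(S)-1)$ and $H(\R)$ is semisimple of real rank $1$. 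If instead $c_1(X)=0$, then $K_X\sim_{\Q}0$ by abundance for minimal threefolds, and the plan is to invoke \cite[Theorem 5.1]{CZ}: assuming $H(\R)$ has real rank $\ge 2$, one uses the Zariski-density of $G/R(G)$ in the $\Q$-group $H$, together with the integral structure on $\NS(X)$ and arithmeticity, to extract a finite-index subgroup of $G$ whose image in $H$ contains an irreducible lattice $\Gamma$ of a higher-rank factor of $H(\R)$; lifting $\Gamma$ to $\Aut(X)$ (with kernel inside $R(G)$, which is of null entropy and virtually unipotent on $\NS_{\C}(X)$) produces a higher-rank lattice action on a projective threefold to which \cite[Theorem 5.1]{CZ} applies and forces $H(\R)$ to be locally isomorphic to $\SL_3(\R)$ or $\SL_3(\C)$. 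Combining the two cases gives the stated dichotomy for $H(\R)$.

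\emph{Main obstacle.} The crux of part (1) is this last step: converting the merely Zariski-dense embedding $G/R(G)\hookrightarrow H$ into an honest action of a higher-rank lattice on a projective threefold so that \cite[Theorem 5.1]{CZ} can be applied — this requires exploiting the $\Z$-structure of $\NS(X)$ and Margulis-type arithmeticity/superrigidity, and controlling the (possibly infinite, unipotent-on-$\NS_{\C}$) kernel $R(G)$ of the lift. The secondary technical points are the verification that $G$ stabilizes the nef eigenclass $L_{g_0}$ in the subcase $r=1$ of part (2), and the block-triangular description of $G\,|\,\NS_{\C}(X)$ along the Iitaka fibration in part (1) when $c_1(X)\ne 0$.
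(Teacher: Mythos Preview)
Your treatment of the $r=1$ subcase in part~(2) has a genuine gap. From $R_1/N(R_1)\cong\Z$ and $g g_0^{a} g^{-1}=g_0^{b} n$ with $n\in N(R_1)$ you only get that $g^{*}L_{g_0}$ is a nef eigenvector of $(g_0^{a})^{*}$ with eigenvalue $d_1(g_0)^{b}$; this does \emph{not} force $g^{*}L_{g_0}$ to be parallel to $L_{g_0}$, since $g_0^{*}$ may have other nef eigenvectors (certainly $L_{g_0^{-1}}$, and in dimension three possibly a third one). The paper handles this carefully: first it proves $K_X\sim_{\Q}0$ (Lemma~\ref{K0}), then shows (Claim~\ref{EV}) that if $h^{s}$ has a nef eigenvector $M$ with eigenvalue $\ne 1$ not parallel to $L_{h^{\pm 1}}$ then $L_h+L_{h^{-1}}+M$ is nef and big with $c_2(X)$-intersection zero, so Theorem~\ref{Torus}(1) already gives the torus-quotient conclusion of part~(2). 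Only under the extra hypothesis that no such $M$ exists does Lemma~\ref{stab} yield $|G:\Stab_{L_h}(G)|\le 2$, and even then one must analyze $\Ker(\psi)$ for the resulting character $\psi:G\to\R_{>0}$: if $\Ker(\psi)\ne N(G)$ one again lands in Theorem~\ref{Torus}(1), and only if $\Ker(\psi)=N(G)$ does one reach the virtual-solvability contradiction. So $r=1$ is not excluded; it is a live case that can produce the conclusion of part~(2) via Theorem~\ref{Torus}(1), not (2).

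For part~(1), your acknowledged ``main obstacle'' is in fact a detour that the paper avoids entirely. The key observation is that the full Zariski closure $\overline{G}(\R)$, and in particular a Levi subgroup $S$ defined over $\Q$ with $S\to H$ a $\Q$-isogeny, already acts on $\NS_{\R}(X)$ --- by construction, as the closure of the geometric action of $G$. Hence one can feed the \emph{representation} $S(\R)\to\GL(\NS_{\R}(X))$ directly into the arguments of \cite[Theorem~5.1, Proposition~5.2]{CZ}; there is no need to manufacture a lattice, invoke arithmeticity or superrigidity, or control the kernel $R(G)$ of a lift. This immediately gives that $H(\R)$ has real rank $\le 1$ or is locally $\SL_3(\R)$ or $\SL_3(\C)$, uniformly and without any case split on $c_1(X)$. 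Your Iitaka-fibration argument for $c_1(X)\ne 0$ is thus unnecessary (and its block-triangular step would need more justification anyway, since $\NS_{\C}(X)$ is not in general an extension of $\NS_{\C}(S)$ by $f^{*}\NS_{\C}(Y)$).
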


{\it We now prove Theorem} \ref{Ssimple}.
Note that $\iota(G)$ is contained in $\overline{G}(\Q)$, and
is Zariski-dense in $\overline{G}$.
As in Lemma \ref{rad}, replacing $G$ by a suitable finite-index subgroup,
we may assume
$\overline{G}$ is connected.
Set $R := R(G)$. The $\iota: G \to \overline{G}$ above
induces an injective homomorphism:
$$\gamma: G/R \to H := \overline{G}/R(\overline{G}) .$$
Indeed, $\gamma$ is defined over $\Q$
(cf.~\cite[0.11]{Ma}).
Of course, $H$ is semi-simple.
$R \, | \, \NS_{\C}(X)$ is solvable, being embedded in the solvable group $R(\overline{G})$.

\begin{lemma}
Up to finite index, $H(\R)$ is either semi-simple and of real rank one, or
locally isomorphic to $\SL_3(\R)$ or $\SL_3(\C)$.
\end{lemma}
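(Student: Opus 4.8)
The plan is to split according to the real rank of $H(\R)$ and, in the higher-rank case, to extract from $G$ an irreducible lattice acting on a model of $X$ and feed it into the classification of \cite[Theorem 5.1]{CZ}.

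First I would record the reductions. Since $\iota(G)$ is Zariski-dense in $\overline{G}$ and $\gamma$ is defined over $\Q$, the image $\gamma(G/R) \subseteq H(\Q)$ is Zariski-dense in the semi-simple group $H$. If $H(\R)$ were compact, then $\overline{G}(\R)$ would be an extension of a compact group by the solvable group $R(\overline{G})(\R)$, hence amenable, so $G \,|\, \NS_{\C}(X) \le \overline{G}(\R)$ would be amenable and therefore virtually solvable by the Tits alternative, contradicting the hypothesis; thus $\rank_{\R} H(\R) \ge 1$. If $\rank_{\R} H(\R) = 1$ we are already in the first alternative, so from now on assume $\rank_{\R} H(\R) \ge 2$. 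Throughout we are free to replace $G$ by finite-index subgroups and, after a $G$-equivariant resolution and passage to the quotient modulo the unipotent radical of the resulting $\NS$-action (which changes neither $H$ nor the hypothesis), to assume $X$ smooth.

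Next I would manufacture a higher-rank lattice. As $\overline{G}$ is Noetherian, $G \,|\, \NS_{\C}(X)$ already contains a finitely generated subgroup that is Zariski-dense in $\overline{G}$; pulling its generators back through $G \to G \,|\, \NS_{\C}(X)$ produces a finitely generated $\Gamma \le \Aut(X)$ with $\Gamma \,|\, \NS_{\C}(X)$ Zariski-dense in $\overline{G}$. Since $\Gamma \,|\, \NS_{\C}(X)$ preserves the lattice $\NS(X)/(\torsion)$, its $\iota$-image in $H$ lies in an arithmetic subgroup of $H(\Q)$; using the $\Q$-structure of $H$ and that $H$ is semi-simple of higher rank, I would argue that after a further finite-index reduction the relevant group is an irreducible lattice $\Lambda$ in $H(\R)$ whose induced action on $X$ is biregular with infinite image on $\NS_{\C}$. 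I expect this step — upgrading the a priori possibly thin, merely Zariski-dense image to a genuine irreducible lattice while keeping a faithful biregular action on a threefold — to be the main obstacle.

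Finally I would invoke \cite[Theorem 5.1]{CZ}, which determines all actions of irreducible lattices in higher-rank semi-simple real Lie groups on threefolds. Applied to $\Lambda$ acting on $X$ it leaves two possibilities: either $H(\R)$ is locally isomorphic to $\SL_3(\R)$ or $\SL_3(\C)$, and we are done; or $X$ is, $\Aut(X)$-equivariantly and birationally, a quotient $T/F$ of an abelian threefold $T$ by a finite group. In the second case the automorphism action on $\NS_{\C}(X)$ is governed by the action of $\Aut_{\group}(T)$ on $H^1(T,\Z) \cong \Z^6$ compatible with the complex structure, so $\overline{G}$ is cut out inside $\GL_3$ over $\Z$ or over the ring of integers of an imaginary quadratic field, and (since $A_2$ has no proper rank-$2$ sub-root-system) the only way for $\overline{G}$ to be non-virtually-solvable with $\rank_{\R} H(\R) \ge 2$ is for $H(\R)$ to be locally isomorphic to $\SL_3(\R)$ (the totally real case) or $\SL_3(\C)$ (the CM case). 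Either outcome exhibits $H(\R)$ as claimed, proving the lemma.
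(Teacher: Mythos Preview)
Your central step---upgrading the merely Zariski-dense image $\gamma(G/R)\subset H(\Q)$ to an irreducible lattice in $H(\R)$---is a genuine gap, and you rightly flag it as the main obstacle. In general there is no such upgrade: Zariski-dense subgroups of arithmetic groups can be thin (of infinite index), so nothing forces the image of $\Gamma$ in $H(\R)$ to be a lattice. Without a lattice you cannot invoke \cite[Theorem~5.1]{CZ} as a black box, and the subsequent case analysis (including the torus-quotient branch) never gets off the ground.

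The paper sidesteps this entirely. The point is that in \cite{CZ} the lattice hypothesis is used, via Margulis superrigidity, to extend the lattice action on cohomology to an action of the ambient semi-simple Lie group. Here that extension is free: a Levi factor $S$ of $\overline{G}$ (defined over $\Q$, $\Q$-isogenous to $H$) sits inside $\overline{G}\subset\GL(\NS_{\C}(X))$, so $S(\R)$ already acts on $\NS_{\R}(X)$, preserving the cubic intersection form and the nef cone because the Zariski-dense subgroup $G\,|\,\NS_{\R}(X)$ does. One then applies the \emph{argument} (not the statement) of \cite[Theorem~5.1, Proposition~5.2]{CZ} directly to this representation of $S(\R)$, which forces $\rk_{\R}H\le 2$ and, in rank~$2$, local isomorphism with $\SL_3(\R)$ or $\SL_3(\C)$. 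The rank-$0$ case is excluded because then $H(\R)$ is compact and the discrete image of $G$ in it is finite, making $G\,|\,\NS_{\C}(X)$ virtually solvable. No lattice, no torus dichotomy, no root-system bookkeeping is needed.
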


\begin{proof}
Let $S$ be a Levi subgroup of $\overline{G}$ such that $\overline{G} = R(\overline{G})S$.
Our $S$ can be chosen to be defined over $\Q$ and the induced composite homomorphism
$S \to \overline{G} \to H$ is a $\Q$-isogeny (cf. \cite[Proof of Proposition 11.23]{Bo}).
Now the argument in \cite[Theorem 5.1, Proposition 5.2]{CZ} for the action
$S(\R) \, | \, \NS_{\R}(X)$ ($\Q$-isogeny to $H(\R)$)
implies that either $H(\R)$ is of real rank $\le 1$, or
$H(\R)$ is of real rank $\ge 2$ and is locally
isomorphic to $\SL_3(\R)$ or $\SL_3(\C)$.
In fact, our $S(\R)$ and indeed even the larger group $\overline{G}(\R)$
already act on $\NS_{\R}(X)$
as the extension of the geometrically induced action of the Zariski-dense
subgroup $G \, | \, \NS_{\R}(X)$ of $\overline{G}(\R)$, so we do not need Margulis'
condition on $G \, | \, \NS_{\R}(X)$ there, for the extension of the action.
To be precise, one main purpose of the extra assumption in \cite{CZ} on the rank of a lattice
(acting on $X$) of a semi-simple
real Lie group is to extend the action of the lattice on cohomology groups of $X$
to an action of the real Lie group.

If $H$ has real rank $\rk_{\R} H = 0$, then $H(\R)$ is compact. The image of $G$ in
$H(\R)$ is contained in an arithmetic subgroup of $H$, hence
discrete and finite. This image is Zariski-dense in $H(\R)$.
Thus $H(\R)$ is a finite group and hence $G$ is a finite extension of $R$,
so $G \, | \, \NS_{\C}(X)$ is virtually solvable, which contradicts the assumption.
Thus, $\rk_{\R} H$ is at least one and the lemma is proved.
\end{proof}

We return to the proof of Theorem \ref{Ssimple}.
Suppose that $R$ is of null entropy.
As mentioned in the proof of Theorem \ref{ThB} (cf.~\cite[Proposition 2.2]{Og07})
the set
$$U(R) := \{g \in R \, ; \, g^* \, | \, \NS_{\C}(X) \,\, \text{is unipotent}\}$$
is a finite-index subgroup of $R$.
So Theorem \ref{Ssimple}(1) occurs
by the lemma above.

Hence we may assume that $R$ is not of null entropy.
We will deduce Theorem \ref{Ssimple}(2).
Take $R_1 \le R$ a finite-index subgroup such that
$R_1 \, | \, \NS_{\C}(X)$
has connected Zariski-closure in $\GL(\NS_{\C}(X))$ (and is solvable).
Thus $R_1/N(R_1) \cong \Z^{\oplus r}$ for some $1 \le r \le \dim X -1 = 2$
by \cite[Theorem 1.2]{Z-Tits}.
If $r = 2$, then Theorem \ref{Ssimple}(2) holds by Theorem \ref{Torus}.

Thus we may assume $r = 1$, i.e., $\langle \overline{h} \rangle = R_1/N(R_1) \cong \Z$
with $h \in R_1$ of positive entropy.

\begin{lemma}\label{K0}
$K_X \sim_{\Q} 0$.
\end{lemma}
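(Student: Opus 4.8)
The plan is to show that the Kodaira dimension $\kappa(X)=0$; once this is known, the abundance theorem for minimal threefolds (cf.\ \cite[3.13]{KM}) tells us $K_X$ is semiample, and then $\kappa(X)=0$ forces $mK_X\sim 0$ for suitable $m>0$, i.e.\ $K_X\sim_{\Q}0$. Since $X$ is minimal we have $\kappa(X)\ge 0$, so it remains to exclude $\kappa(X)\in\{1,2,3\}$. The case $\kappa(X)=3$ is immediate: a threefold of general type has finite automorphism group, whereas $h$, having positive entropy, has infinite order. For $\kappa(X)\in\{1,2\}$ I would use the Iitaka fibration $\phi:X\to Z$ attached to $|mK_X|$: it is $\Aut(X)$-equivariant, and, as $h^*K_X=K_X$ literally, $h$ descends to $h_Z\in\Aut(Z)$ fixing the ample class $\OO_Z(1)$ coming from $mK_X$; in particular the numerical class of $K_X$, equivalently (after rescaling) of a general fibre of $\phi$, is fixed by all of $G$ on $\NS_{\C}(X)$.

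Consider first $\kappa(X)=2$, so $Z$ is a surface. For every $g\in G$, $g|Z$ fixes the ample numerical class of $\OO_Z(1)$, so $G|\NS_{\C}(Z)$ is of null entropy, hence virtually solvable by Theorem \ref{ThB}(1). Passing to the $G$-equivariant Leray filtration of $\NS_{\C}(X)$ associated to $\phi$, the graded pieces are $\phi^*\NS_{\C}(Z)$ (on which $G$ acts virtually solvably, as just noted), the $G$-fixed line spanned by the fibre class, and a piece built from the cohomology of the elliptic general fibre, on which $G$ acts through a finite group together with the monodromy (again virtually solvable, since translations act trivially on $H^1$ of the fibre). Hence $G|\NS_{\C}(X)$, being a linear group which is an iterated extension of virtually solvable groups, is virtually solvable by the Tits alternative (cf.\ Lemma \ref{solext}), contradicting the hypothesis.

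The case $\kappa(X)=1$ is the crux. Now $Z$ is a curve $C$ and the general fibre $F$ is a surface with $K_F\sim_{\Q}0$, so $F$ is a K3, abelian, Enriques or bielliptic surface; also $d_1(h)>1$ and, by log-concavity of the dynamical degrees of the threefold automorphism $h$ (with $d_0=d_3=1$, whence $d_2\ge\sqrt{d_1}>1$), also $d_2(h)>1$, and one checks that the nef eigenclass $\theta:=L_h$ restricts to a nonzero nef class of self-intersection $0$ on $F$ (using $\theta^2\cdot K_X=0$, which holds because $\theta^2\cdot K_X$ is a number scaled by $d_1(h)^2\ne 1$ under $h^*$). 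Again I would pass to the $G$-equivariant Leray filtration of $\NS_{\C}(X)$ for $\phi$: apart from the trivial line spanned by the fibre class, the only graded piece carrying a potentially non-virtually-solvable $G$-action is built from the cohomology of the fibres, on which $G$ preserves a polarization form and hence acts through a semisimple classical group; since $G|\NS_{\C}(X)$ is not virtually solvable, this piece must carry a non-virtually-solvable action, and one combines this with the fact that $H=\overline{G}/R(\overline{G})$ is (up to finite index) semisimple of real rank one or locally isomorphic to $\SL_3(\R)$ or $\SL_3(\C)$ — established already in the excerpt — to see that the positive-entropy element $h$, being forced by $h\in R(G)$ to act through the solvable radical $R(\overline{G})$, cannot act on this fibre piece with spectral radius $d_1(h)>1$, the desired contradiction. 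The main obstacle is exactly this last step when $C\cong\PP^1$: then $\Aut(C)=\PGL_2$ is infinite (although of null entropy) and $G|C$ may be large while acting trivially on $\NS_{\C}(C)$, so one cannot simply pass to a finite-index subgroup preserving a fibre; making the argument airtight seems to require, beyond Theorem \ref{ThB}, the structure of the semisimple part of $\overline{G}$ from \cite{CZ} together with the classification of positive-entropy automorphisms of surfaces with trivial canonical class, in order to rule out that the fibrewise positive entropy of $h$ is absorbed into a solvable normal part of $G|\NS_{\C}(X)$.
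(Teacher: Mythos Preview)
Your proposal has a genuine gap in the case $\kappa(X)=1$, which you yourself acknowledge (``making the argument airtight seems to require\dots''). The missing ingredient is a classical result of Ueno (\cite[Theorem 14.10]{Ue}): for any variety with $\kappa(X)\ge 0$, the induced action of $\Aut(X)$ on the base $Y$ of the Iitaka fibration factors through a \emph{finite} group. This immediately dissolves your worry about the case $C\cong\PP^1$: regardless of the genus of $C$, after replacing $G$ by a finite-index subgroup the action on the base is trivial, so $G$ does act on a fixed general fibre $S$, faithfully. Your Leray-filtration approach and the delicate analysis of how $h$'s entropy sits inside $R(\overline{G})$ are then unnecessary.

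The paper exploits this as follows. With $G|Y$ trivial, $G$ acts faithfully on a general fibre $S$, and $N(G|S)=N(G)|S$; since $G\ne N(G)$, the group $G|S$ is not of null entropy, forcing $\dim S\ge 2$ and hence $\dim S=2$, $\kappa(X)=1$ (this disposes of $\kappa=2,3$ in one stroke, without your separate Leray argument). Now one works entirely on the surface $S$: for $R_1|S\lhd G|S$ with $R_1/N(R_1)\cong\Z$, the hypotheses of Lemma \ref{stab} are automatic for surfaces (the Lie--Kolchin cone theorem gives a common nef eigenvector $L_1$, and \cite[Lemma 2.12]{antiK} shows any nef eigenvector of $h^s$ with eigenvalue $\ne 1$ is parallel to $L_{h^{\pm 1}}$). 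Lemma \ref{stab} then gives, after passing to index $\le 2$, a character $\chi:G|S\to\R_{>0}$ with $\Ker\chi=N(G|S)$. Thus $G/N(G)\cong(G|S)/N(G|S)$ is abelian, so $G|\NS_{\C}(X)$ is virtually solvable by Theorem \ref{ThB}, contradicting the standing hypothesis. No appeal to the structure of $\overline{G}/R(\overline{G})$ or to the surface classification is needed.
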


\begin{proof}
Suppose the contrary that the lemma is false.
By the $3$-dimensional minimal model program and abundance theorem (cf.~\cite[3.13]{KM}),
the Kodaira dimension $\kappa(X)$ is positive, and $|m K_X|$ (for some $m > 0$)
defines a holomorphic
map $\psi : X \to Y$ with connected fibres and $\dim Y = \kappa(X)$.
The induced action
of $G$ on $Y$ is trivial if $G$ is replaced by a suitable finite-index subgroup
(cf.~\cite[Theorem 14.10]{Ue}).
Hence $G$ acts faithfully on a general fibre $S$ of $\psi$.
Under the identification $G \cong G \, | \, S$,
we have $N(G \, | \, S) = N(G) \, | \, S$ (cf.~\cite[2.1(11) Remark]{JDG}).
Since $G \ne N(G)$, our $G \, | \, S$ is not of null entropy.
Hence $\dim S \ge 2$. Also $\dim S = \dim X - \dim Y \le \dim X - 1 = 2$.
Thus $\dim S = 2$.

In the notation above, $(R_1 \, | \, S)/N(R_1 \, | \, S) \cong \Z$.
Hence the restrictions of $R_1$ and $R$ on $\NS_{\C}(S)$ are virtually solvable
(cf.~Theorem \ref{ThB} or \ref{ThC}).
Lemma \ref{stab} is applicable to $R \, | \, S \lhd G \, | \, S$
since the conditions in Lemma \ref{stab} (2) (for surfaces)
follow from the condition in Lemma \ref{stab} (1). Indeed, by the cone theorem of
Lie-Kolchin type \cite[Theorem 1.6]{Z-Tits},
$R_1$ (replaced by a finite-index subgroup) has a common nonzero nef eigenvector $L_1$;
thus the class $h^*L_1$ is parallel to $L_1$;
after switching $h$ with $h^{-1}$ if necessary,
\cite[Lemma 2.12]{antiK} implies that $h^*L_1 = d_1(h) L_1$, and also the
second condition in Lemma \ref{stab} (2).
So, by Lemma \ref{stab}, replacing $G$ by its subgroup of index $\le 2$,
$L_h$ gives rise to a character $\chi : G \, | \, S \to (\R_{> 0}, \times)$ and
that (for surfaces) $\Ker(\chi) = N(G \, | \, S)$. So
the null set $N(G \, | \, S)$ is a subgroup and
$G/N(G) = (G \, | \, S)/N(G \, | \, S) \cong \Imm \chi$, an abelian group.
Hence $G \, | \, L_{\C}$ is virtually solvable (cf.~Theorem \ref{ThB} or \ref{ThC}),
contradicting the assumption.
\end{proof}

In notation of \ref{setup2.1}, there exist two nonzero nef divisors $L_{h}$, $L_{h^{-1}}$
(which will be fixed) such that
$(h^{\pm1})^* L_{h^{\pm1}} = d_1(h^{\pm1}) L_{h^{\pm1}}$ with $d_1(h^{\pm1}) > 1$.

\begin{claim}\label{EV}
Suppose
there are a nef $\R$-divisor $M$, a real number $\lambda \ne 1$ and
an integer $s \ne 0$
such that $(h^s)^*M = \lambda M$ and $M$ is not parallel to $L_{h^{\pm1}}$.
Then Theorem $\ref{Ssimple} (2)$ holds.
\end{claim}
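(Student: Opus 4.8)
\emph{The plan} is to deduce Claim~\ref{EV} from Theorem~\ref{Torus}(1). By Lemma~\ref{K0} we have $K_X \sim_{\Q} 0$, hence $c_1(X) = 0$; so it suffices to produce a nef and big $\R$-divisor $D$ on $X$ with $D \cdot c_2(X) = 0$, after which Theorem~\ref{Torus}(1), applied with $B := \Aut(X) \supseteq G$, furnishes the $B$-equivariant (hence $G$-equivariant) birational surjective morphism $X \to X' \cong T/F$ together with the lifted group $\widetilde{B}$ on $T$ with $\widetilde{B}/F \cong B$ --- which is exactly Theorem~\ref{Ssimple}(2).

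The first step is to observe that the eigenclasses detected by the hypothesis all lie in $c_2(X)^{\perp}$. Indeed $c_2(X)$ is $\Aut(X)$-invariant, so $(h^{s})^{*}c_2(X) = c_2(X)$, and since $(h^{s})^{*}$ preserves intersection numbers, any divisor class $N$ with $(h^{s})^{*}N = \mu N$ and $\mu \ne 1$ satisfies $c_2(X)\cdot N = \big((h^{s})^{*}c_2(X)\big)\cdot\big((h^{s})^{*}N\big) = \mu\,(c_2(X)\cdot N)$, forcing $c_2(X)\cdot N = 0$. Applying this to $L_{h}$ (eigenvalue $d_1(h)^{s} > 1$), to $L_{h^{-1}}$ (eigenvalue $d_1(h^{-1})^{-s} \in (0,1)$) and to $M$ (eigenvalue $\lambda \ne 1$) shows that these three nef classes lie in $c_2(X)^{\perp}$; moreover they are $\R$-linearly independent, since a relation $M = \alpha L_{h} + \beta L_{h^{-1}}$ would force $\lambda \in \{\,d_1(h)^{s},\, d_1(h^{-1})^{-s}\,\}$ and hence $M$ parallel to $L_{h}$ or to $L_{h^{-1}}$, which is excluded. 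In particular $\rho(X) \ge 3$ and $c_2(X)^{\perp}\cap\Nef(X)$ contains a $3$-dimensional subcone.

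It remains to show $c_2(X) = 0$ (then $D$ may be taken ample and we are done). Here Miyaoka's pseudo-effectivity of $c_2$ on a minimal threefold (cf.\ \cite[Theorem 4.1, Proposition 1.1]{SW}) gives $c_2(X)\cdot H \ge 0$ for every nef $H$; hence, if $c_2(X) \ne 0$, then for any nef and big $D$ one writes $D = (D - \varepsilon A) + \varepsilon A$ with $A$ ample and $D - \varepsilon A$ still nef, and gets $D \cdot c_2(X) \ge \varepsilon\,A \cdot c_2(X) > 0$; so a nef big divisor perpendicular to $c_2(X)$ exists exactly when $c_2(X) = 0$. (One cannot simply take $D = L_{h} + L_{h^{-1}} + M$: the self-intersection $D^{3}$ can vanish when the eigenvalues are generic, as then all mixed triple products among $L_{h}, L_{h^{-1}}, M$ vanish.) Thus the task reduces to excluding $c_2(X) \ne 0$. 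By \cite[Corollary, p.~266]{SW}, $c_2(X) \ne 0$ means $X$ is not birational to a quotient of an abelian threefold, so $X$ is of Calabi--Yau type or carries a nontrivial (equivariant) fibration; in each such case a positive-entropy $h$ should have only the two nef $(h^{s})^{*}$-eigenclasses $L_{h}, L_{h^{-1}}$ with eigenvalue $\ne 1$, the remaining eigenclasses lying in the negative part of the relevant intersection pairing and so not being nef. I expect this dichotomy --- that a third nef eigenclass as in the hypothesis cannot occur once $c_2(X) \ne 0$ --- to be the main obstacle, and to require the classification of positive-entropy automorphisms of minimal threefolds with $c_1 = 0$ from \cite{CY3} together with \cite{SW}; once it is in hand, $c_2(X) = 0$ and Theorem~\ref{Torus}(1) completes the proof.
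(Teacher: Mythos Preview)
Your plan is right --- reduce to Theorem~\ref{Torus}(1) via Lemma~\ref{K0} --- and your computation that $L_h$, $L_{h^{-1}}$, $M$ all lie in $c_2(X)^{\perp}$ is exactly what the paper does. The gap is in the next step: you discard $D := L_h + L_{h^{-1}} + M$ on the grounds that $D^3$ might vanish, and then try to prove the much stronger statement $c_2(X) = 0$, which you do not finish.

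In fact $D$ \emph{is} nef and big, and this is the whole point. Since $M$ is not parallel to $L_{h^{\pm 1}}$, \cite[Corollary 3.2]{DS} gives $M \cdot L_h \ne 0$ and $M \cdot L_{h^{-1}} \ne 0$ as $(2,2)$-classes. The three classes are nef eigenvectors of $h^*$ (after renaming $h^s$ as $h$) for the eigenvalues $\lambda$, $d_1(h)$, $1/d_1(h^{-1})$, and $d_1(h) \ne 1/d_1(h^{-1})$; then \cite[Lemma 4.4]{DS} forces the triple product $M \cdot L_h \cdot L_{h^{-1}} \ne 0$. Since all the mixed terms in $D^3$ are nonnegative (nef classes) and this one is strictly positive, $D^3 > 0$, so $D$ is nef and big with $D \cdot c_2(X) = 0$. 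Theorem~\ref{Torus}(1) now applies directly.

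Two further remarks. First, your perturbation argument that a nef and big $D$ with $D \cdot c_2(X) = 0$ forces $c_2(X) = 0$ is incorrect: $D - \varepsilon A$ is nef for small $\varepsilon > 0$ only when $D$ is ample, not merely nef and big, so the inequality $D \cdot c_2(X) \ge \varepsilon\, A \cdot c_2(X)$ does not follow. (Indeed, in the paper's Theorem~\ref{Torus} one only gets $c_2(X') = 0$ \emph{after} contracting $X \to X'$.) Second, the detour through a classification of positive-entropy automorphisms is unnecessary once you invoke \cite{DS}; the argument is purely numerical on $\NS_{\R}(X)$.
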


\begin{proof}
Note that $(h^s)^*L_{h} = d_1(h)^s L_{h}$,
$(h^s)^*L_{h^{-1}} = d_1(h^{-1})^{-s} L_{h^{-1}}$,
and $(h^s)^* M = \lambda^s M$.
Rewriting $h^s$ as $h$, we may assume $s = 1$.
We have $M . c_2(X) = h^*(M . c_2(X)) = h^*M . h^*c_2(X) = \lambda M . c_2(X)$.
Hence $M . c_2(X) = 0$ for $\lambda \ne 1$.  Similarly, $L_{h^{\pm1}} . c_2(X) = 0$.
By the assumption, $M . L_{h^{\pm1}} \ne 0$ (cf.~\cite[Corollary 3.2]{DS}).
Thus, since $M, L_h, L_{h^{-1}}$ are nef eigenvectors of $h^*$ corresponding to
eigenvalues $\lambda$, $d_1(h)$, $1/d_1(h^{-1})$ and since
$d_1(h) \ne 1/d_1(h^{-1})$,
\cite[Lemma 4.4]{DS} implies that the product of these three nef divisors is nonzero
and hence the sum of these three is a nef and big divisor, perpendicular to $c_2(X)$.
So Theorem \ref{Ssimple}(2) holds true, by Theorem \ref{Torus}(1) and Lemma \ref{K0}.
\end{proof}

We return to the proof of Theorem \ref{Ssimple}.
As proved above, the closed cone
$\Nef(X) \cap c_2(X)^{\perp} = \{M \in \Nef(X) \, | \, M . c_2(X) = 0\}$
contains $L_{h^{\pm1}}$. Since $R_1 | \NS_{\C}(X)$ is solvable,
the cone theorem of Lie-Kolchin type (cf.~e.g.~\cite[Theorem 2.6]{Z-Tits})
implies that the above closed cone contains a nonzero common nef divisor $L_1$
(with $L_1 . c_2(X) = 0$)
of $R_1$, after $R_1$ is replaced by a finite-index subgroup.
Write $g^*L_1 = \chi(g) L_1$ and consider the homomorphism
$$
\varphi: R_1 \, \to \, (\R, +), \,\,\,\,
g \, \mapsto \, \log \chi(g) .
$$

Clearly, $N(R_1) \le \Ker(\varphi)$ (cf.~\ref{setup2.1}).
If $g \in \Ker(\varphi) \setminus N(R_1)$, then
the product of the three nef eigenvectors $L_1$, $L_{g^{\pm1}}$
(corresponding to different eigenvalues $1$, $d_1(g)  \ne 1/d_1(g^{-1})$ of $g^*$)
is nonzero by \cite[Lemma 4.4]{DS} and hence the sum of these three
vectors is a nef and big $\R$-divisor class perpendicular to $c_2(X)$.
Thus, by Lemma \ref{K0}, we can apply Theorem \ref{Torus}(1) to conclude
Theorem \ref{Ssimple}(2).

Therefore, we may assume that $\Ker(\varphi) = N(R_1)$.
In particular, $\chi(h) \ne 1$, where $h^*L_1 = \chi(h) L_1$.
Possibly switching $h$ with $h^{-1}$, we may assume that
$\chi(h) > 1$. By Claim \ref{EV}, we may assume that $L_1 = L_h$
which is a common eigenvector of $R_1$.
Thus the condition (1) of Lemma \ref{stab} is satisfied
while the condition (2) can be assumed in view of Claim \ref{EV}.
So, by Lemma \ref{stab},
replacing $G$ by its subgroup of index $\le 2$, we may assume that
$G$ fixes $L_h$ up to scalars.
Write $g^*L_h = \chi'(g) L_h$.
Let
$$
\psi : G \, \to \, (\R, +), \,\,\,\,
g \, \mapsto \, \log \chi'(g)
$$
so that $G/\Ker(\psi)$ is mapped to an abelian subgroup of $(\R, +)$.
If $\Ker(\psi) \ne N(G)$, then as in the case of $\Ker(\varphi) \ne N(G)$ above,
we take $g \in \Ker(\psi) \setminus N(G)$, so $c_2(X)$ is perpendicular to the
nef and big divisor $L_1 + L_g + L_{g^{-1}}$ and hence
Theorem \ref{Ssimple}(2) occurs.

Thus we may assume that $N(G) = \Ker(\psi)$
which is hence a subgroup of $G$.
Since $G/N(G) \cong \Imm \psi$ is abelian,
$G \, | \, \NS_{\C}(X)$ is virtually solvable by Theorem \ref{ThB} or \ref{ThC},
contradicting the assumption.
{\it The proof of Theorem $\ref{Ssimple}$ is completed.}

\begin{setup}
{\bf Proof of Theorem \ref{Cor3}}
\end{setup}

We may assume that Theorem \ref{Ssimple}(2) occurs and use the notation there.
Let $\widetilde{G}$ be the lifting to $T$ of $G \, | \, X'$ with $\widetilde{G}/F = G \, | \, X'$.
As sets (and set of left cosets), we have equalities
$N(\widetilde{G})/F = N(G \, | \, X') = N(G) \, | \, X'$; so $N(G) \le G$ if and only if
$N(\widetilde{G}) \le \widetilde{G}$, and if this is the case
$\widetilde{G}/N(\widetilde{G}) \cong G/N(G)$
(cf.~\cite[Lemma 2.6]{JDG}). Thus, by Theorem \ref{ThC}(3),
as on $X$, neither $G \, | \, \NS_{\C}(X')$ nor $\widetilde{G} \, | \, \NS_{\C}(T)$
is virtually solvable. By the same reasoning, the
lifting to $T$ of $R(G) \, | \, X'$
has virtually solvable action on $ \NS_{\C}(T)$,
is normal in $\widetilde{G}$ and is not of null entropy.
$R(\widetilde{G})$ contains this lifting up to finite index, so it is not of null entropy.
Hence we may assume that $X = T$,
a complex torus.

Let
$\hat{G} \le \GL(H^0(X, \Omega_X^1)^{\vee}) = \GL_3(\C)$
be the Zariski-closure of the action
$G \, | \, H^0(X, \Omega_X^1)^{\vee}$. Since every $g \in G$ acts on $H^1(X, \Z)$ invertibly,
its matrix representation
has determinant $\pm 1$; note also $H^1(X, \C) = H^0(X, \Omega_X^1)$
$\oplus H^0(X, \Omega_X^1)^{\vee}$;
hence we may assume that $\hat{G}$ is contained in $\SL_3(\C)$
and connected, after $G$ is replaced by a finite-index subgroup.

Since $H^*(X, \C) := \oplus_{i \ge 0} H^i(X, \C)$
is generated by wedge products of $H^0(X, \Omega_X^1)$ and its conjugate,
the null set $N(G)$ is equal to
$\{g \in G \, ; \, g \, | \, H^0(X, \Omega_X^1)$ is of null entropy$\}$.
Let
$$R :=G \cap R(\hat{G})  \lhd G, \,\,\,\,
U := G \cap U(\hat{G})  \lhd G. $$
Then $R(\hat{G}) \, | \,  H^*(X, \C)$ and hence $R \, | \, H^*(X, \C)$ are solvable.
By Theorem \ref{ThC},
$R$ has a finite-index subgroup $R_1$ such that
$$\Z^{\oplus r} \cong R_1/N(R_1) \le R/N(R) .$$
Also $|N(R) : U| < \infty$ (cf.~Theorem \ref{ThB}). Thus $R/U$ contains a copy of
$\Z^{\oplus r}$ as a subgroup
of finite index (cf.~Lemma \ref{modfin}).
Consider the natural embedding
$$G/U \to J := \hat{G}/U(\hat{G})$$
into the reductive group
$J$ of real rank $\le \rk_{\R} \SL_3(\C) = 2$.

If $r \ge 2$, then $\rk_{\R} J = 2$; the Zariski-closure of $R/U \subset J$
contains a copy of $\Z^{\oplus 2}$ and hence a maximal torus of $J$, and is normal in $J$,
so this closure equals $J$. Thus $J$ (like $R/U$) and hence the actions of
$G/U$ and
$G$ on $H^*(X, \C)$ are all solvable, contradicting the  assumption.

Consider the case $r = 0$, i.e., $R \subseteq N(G)$.
This contradicts the extra assumption that $R(G)$ is not of null entropy and the assertion(*):
$R(G)$ equals $R$ up to finite index. Indeed, as in the proof of Lemma \ref{G|NS},
$G \,  | \, \NS_{\C}(X) = G/K$ with $|K : G \cap \Aut_0(X)| < \infty$,
while $G \,  | \, H^0(X, \Omega_X^1)^{\vee} = G/(G \cap \Aut_0(X))$.
Hence $G \,  | \, \NS_{\C}(X) $ equals $G \,  | \, H^0(X, \Omega_X^1)^{\vee}$ modulo a finite group.
Now the assertion(*) follows from the definitions of $R(G)$ and $R$.

Finally, assume $r = 1$, i.e., $R_1/N(R_1) \cong \Z$.
For any $g_1 \in G$,
the group $G_1 := \langle g_1, R \rangle$ (replaced by its finite-index subgroup)
has $G_1 \, | \, H^*(X, \C)$ solvable and
$G_1/N(G_1) \cong \Z^{\oplus s}$ (cf.~Theorem \ref{ThC}).
We claim that $s \ge 2$ for some $g_1$.
If the claim is false, then for any $g_1 \in G$, we have $s \le 1$ and
hence $g_1^a = h^b n$ for some $a \ge 1$,  $n \in N(G)$,
where $\langle \bar{h} \rangle =$ $R_1/N(R_1) \le G_1/N(G_1)$.
Thus $g_1$ (mod $R$) has a positive power acting as a unipotent
element on $H^0(X, \Omega_X^1)^{\vee}$ because the same is true for $n \in N(G)$.
So the subgroup $G/R$ of an arithmetic subgroup of
$\hat{G}/R(\hat{G})$ (defined over $\Q$; cf.~\cite[ChI; 0.11]{Ma})
has a unipotent group $U(G/R)$ as its subgroup of finite index, by Burnside's theorem
as in \cite[Proposition 2.2]{Og07} or Theorem \ref{ThB}.
Thus its Zariski-closure $\hat{G}/R(\hat{G})$ is both unipotent and semi-simple
and hence trivial. So $G  \, | \, H^*(X, \C)$ is solvable, contradicting the assumption.

Thus the claim is true and hence some
$G_1 := \langle g_1, R \rangle$ has $G_1/N(G_1) \cong \Z^{\oplus s}$
for some $s \ge 2$. So by \cite[Paragraph before \S 2.8]{CY3}, $U(G_1)$ and hence
$N(G_1)$ and $N(R )$ act as finite groups on $H^0(X, \Omega_X^1)$ and also on
$H^*(X, \C)$ (cf.~Proof of Theorem \ref{ThB}).
Thus, when restricted on $H^0(X, \Omega_X^1)^{\vee}$, our $R$
(containing a finite-index subgroup $R_1$ with $R_1/N(R_1) \cong \Z$)
is virtually infinite cyclic and normalized by
$G$, so it is contained in the centre of $G \, | \, H^0(X, \Omega_X^1)^{\vee}$
and of $\hat{G}$,
by replacing $G$ by a finite-index subgroup and
considering the conjugate action on the derived series of $R$.

Now we follow referee's suggestion.
Take an element $h \in R \setminus N(R )$.
If $h  \, | \, H^0(X, \Omega_X^1)^{\vee}$  $\in \SL_3(\C)$ has
three distinct eigenvectors, then $h$ and all elements of
$G$ are simultaneously diagonalizable and hence $G  \, | \, H^*(X, \C)$ is abelian,
contradicting the assumption.

Therefore, relative to a suitable basis $B$ of $H^0(X, \Omega_X^1)^{\vee}$,
our $h  \, | \, H^0(X, \Omega_X^1)^{\vee}$ is in one of the
Jordan canonical forms
$$\block \diag[\alpha^{-2}, J_2(\alpha)], \,\,\,\, \diag[\alpha^{-2}, \alpha, \alpha]$$
and the matrix representation
$g  \, | \, H^0(X, \Omega_X^1)^{\vee} = (a_{ij})$ of every $g \in G$
is especially upper triangular.
Consider the projection
$$\tau : G \to \C^*, \,\,\,\, g \mapsto a_{11} .$$
If $\Ker \tau \subseteq N(G)$,
then the actions of $\Ker \tau$ and hence of $(\Ker \tau) \, R$ on $H^0(X, \Omega_X^1)^{\vee}$
are virtually solvable (cf.~Theorem \ref{ThB}), so is that of $G$, because
$G/((\Ker \tau) \, R)$ is a quotient of the abelian group $\Imm \tau$.
This contradicts the assumption.

Thus, we can take $g_1 \in \Ker \tau \setminus N(G)$.
Then $g_1 \,  | \, H^0(X, \Omega_X^1)^{\vee}$ has 3 eigenvalues $1, \lambda^{\pm1}$
(with $|\lambda|  \ne 1$);
it has a unique (up to scalar) eigenvector $w \in H_1(X, \Z)$ ($= $ the lattice $\Lambda$ of the torus
$X = \C^3/\Lambda$) corresponding to the eigenvalue $1 \in \Q$ and is proportional to the column vector
$(1, 0, 0)^t$ (in basis $B$). Now $h$ or $h^{-1}$ takes $w$ to $\alpha^{-2} w$ with $|\alpha^{-2}| < 1$.
This contradicts the fact that $h(\Lambda) = \Lambda$ which is discrete in $\C^3 = \R^6$.
{\it Theorem $\ref{Cor3}$ is proved.}


\begin{thebibliography}{99}

\bibitem{Be}
A.~Beauville,
Some remarks on K\"ahler manifolds with $c\sb{1}=0$,
\emph{Classification of Algebraic and Analytic Manifolds}
(Katata, 1982, ed.\ K.~Ueno),
Progr.\ Math.\ \textbf{39}, Birkh\"auser, 1983, pp.~1--26.

\bibitem{BCHM}
C.~Birkar, P.~Cascini, C.~D.~Hacon and J.~McKernan,
Existence of minimal models for varieties of log general type,
J. \ Amer. \ Math. Soc. \ \textbf{23} (2010) 405--468.

\bibitem{Bi}
G.~Birkhoff,
Linear transformations with invariant cones,
Amer.\ Math.\ Monthly \textbf{74} (1967), 274--276.

\bibitem{Bo}
A.~Borel,
Linear algebraic groups, 2nd ed. Graduate Texts in Math., \ \textbf{126},
Springer-Verlag,
1991.

\bibitem{Ca99}
S. Cantat,
Dynamique des automorphismes des surfaces projectives complexes,
C.\ R.\ Acad.\ Sci.\ Paris Ser.\ I Math. \textbf{328}(1999), no. 10, 901--906.

\bibitem{Ca}
S.\ Cantat, Sur les groupes de transformations
birationnelles des surfaces, Ann.\ of Math.\ \textbf{174} (2011), 299--340.

\bibitem{CZ}
S.~Cantat and A.~Zeghib,
Holomorphic actions of higher rank lattices in dimension three, Preprint.

\bibitem{DS}
T.-C.~Dinh and N.~Sibony,
Groupes commutatifs d'automorphismes d'une vari\'et\'e k\"ahlerienne compacte,
Duke Math.\ J.\ \textbf{123} (2004), no.~2, 311--328.

\bibitem{FZ}
B.~Fu and D.~-Q.~Zhang, in preparation.


\bibitem{Fu}
A.~Fujiki,
On automorphism groups of compact K\"ahler manifolds,
Invent.\ Math.\ \textbf{44} (1978), no.~3, 225--258.

\bibitem{Gr}
M.~Gromov, On the entropy of holomorphic maps,
\ Enseign. \ Math. (2)  \textbf{49} (2003),  no. 3--4, 217--235.
(first appeared as SUNY preprint in 1977).

\bibitem{KM} J.~Koll\'ar and S.~Mori,
Birational geometry of algebraic varieties,
Cambridge Tracts in Math.\ \textbf{134},
Cambridge Univ.\ Press, 1998.

\bibitem{Li}
D.~I.~Lieberman,
Compactness of the Chow scheme: applications to automorphisms
and deformations of K\"ahler manifolds,
pp.~140--186,
Lecture Notes in Math.\ \textbf{670}, Springer, 1978.

\bibitem{Ma}
G.~A.~Margulis, Discrete subgroups of semisimple Lie groups, Springer-Verlag, 1991.

\bibitem{Mi}
M.~Miyanishi,
Algebraic methods in the theory of algebraic threefolds ---
surrounding the works of Iskovskikh, Mori and Sarkisov,
69--99,
Adv.\ Stud.\ Pure Math.\ \textbf{1}, North-Holland, Amsterdam, 1983.

\bibitem{Og07}
K.~Oguiso,
Automorphisms of hyperk\"ahler manifolds in the view of topological entropy,
Algebraic geometry, 173--185, Contemp.\ Math.\ \textbf{422}, Amer.\ Math.\ Soc.,\
Providence, RI, 2007.

\bibitem{OS}
K.~Oguiso and J.~Sakurai,
Calabi-Yau threefolds of quotient type,
Asian J. \ Math. \ \textbf{5} (2001), 43--77.

\bibitem{SW}
N.~I.~Shepherd-Barron and P.~M.~H.~Wilson,
Singular threefolds with numerically trivial first
and second Chern classes,
J.\ Algebraic Geom.\ \textbf{3} (1994), no.~2, 265--281.

\bibitem{Ti}
J.~Tits,
Free subgroups in linear groups,
J. Algebra \textbf{20} (1972), 250--270.

\bibitem{Ue}
K.~Ueno,
Classification theory of algebraic varieties and compact complex spaces, Notes written in collaboration with P. Cherenack,
Lecture Notes in Mathematics, Vol. \textbf{439}, Springer-Verlag, Berlin-New York, 1975.

\bibitem{Yo}
Y.~Yomdin,
Volume growth and entropy,
Israel J. \ Math. 57 (1987), no. 3, 285--300.

\bibitem{antiK}
D.~-Q.~Zhang,
Automorphism groups and aniti-pluricanonical curves,
Math.\ Res.\ Lett.\ \textbf{15} (2008), no.~1, 163--183.

\bibitem{Z-Tits}
D.~-Q.~Zhang,
A theorem of Tits type for compact K\"ahler manifolds,
Invent.\ Math.\ \textbf{176} (2009), no.~3, 449--459.

\bibitem{JDG}
D.~-Q.~Zhang, Dynamics of automorphisms on projective complex manifolds,
J.\ Differential Geom.\ \textbf{82} (2009), no.~3, 691--722.

\bibitem{uniruled}
D.~-Q.~Zhang,
Polarized endomorphisms of uniruled varieties
(with an Appendix by Y.~Fujimoto and N.~Nakayama),
Compos.\ Math.\ \textbf{146} (2010), no.~1, 145--168.


\bibitem{CY3}
D.~-Q.~Zhang,
Automorphism groups of positive entropy on minimal projective varieties,
Adv.\ Math.\ \textbf{225} (2010), no.~5, 2332--2340.


\end{thebibliography}
\end{document}